\newtheorem{Theorem}{Theorem}[section]
\newtheorem{Definition}[Theorem]{Definition}
\newtheorem{Proposition}[Theorem]{Proposition}
\newtheorem{Lemma}[Theorem]{Lemma}
\newtheorem{Corollary}[Theorem]{Corollary}
\newtheorem{Remark}[Theorem]{Remark}
\def\Q{\mathbb Q}
\def\R{\mathbb R}
\def\N{\mathbb N}
\def\E{\mathbb E}
\def\P{\mathbb P}
\def\O{\mathcal O}
\def\eps{\varepsilon}
\def\ds{\displaystyle}
 \def\oo{\mathaccent23}
\newcommand{\one}{1\!\!\!\;\mathrm{l}}
\title[Maximal $L^2$ regularity]{Maximal $L^2$ regularity for Dirichlet problems in  Hilbert spaces}
\date{}
\author[G. Da Prato]{Giuseppe Da Prato}
\address{Scuola Normale Superiore\\
Piazza dei Cavalieri, 7\\ 
56126 Pisa, Italy}
\email{g.daprato@sns.it}
\author[A. Lunardi]{Alessandra Lunardi}
\address{
Dipartimento di Matematica\\
Universit\`a di Parma\\
Parco Area delle Scienze, 53/A\\
43124 Parma, Italy}
\email{alessandra.lunardi@unipr.it}
\subjclass[2000]{35R15, 60H07}
\keywords{PDE's in infinite dimensions, Ornstein-Uhlenbeck operators, Dirichlet problems, maximal Sobolev regularity}
\begin{document}

\begin{abstract}
We consider the   Dirichlet problem
$\lambda U -  {\mathcal L}U= F$ in $\mathcal O$, $U=0$ on $\partial \mathcal O$. Here 
$F\in L^2(\mathcal O, \mu)$ where $\mu$ is a nondegenerate centered Gaussian measure in a Hilbert space $X$, 
$\mathcal L$ is  an Ornstein--Uhlenbeck operator, and $\mathcal O$ is an  open   set in $X$ with good boundary. We address the problem whether the weak solution 
 $U$ belongs to the Sobolev space $W^{2,2}(\mathcal O, \mu)$. It is well known that the question has positive answer if  $\O = X$; if $\O \neq X$ we give a sufficient condition in terms of geometric properties of the boundary $\partial \O$. The results are quite different with respect to the finite dimensional case, for instance if $\O$ is the ball centered at the origin with radius $r$ we prove that    $U\in W^{2,2}(\mathcal O, \mu)$ only for small $r$.
 \end{abstract}

\maketitle 

\section{Introduction}

 The extension of the rich classical theory of linear elliptic PDE's in finite dimensions to infinite dimensional Hilbert spaces, motivated by  stochastic differential equations  arising in different domains (quantum fields theory, statistical mechanics, biology, chemistry, mathematical finance),  is a widely open field. 

Several well established finite dimensional techniques fail, because of obvious  difficulties such as lack of compactness of bounded closed sets, and of less obvious difficulties such as lack of translation invariant and doubling Borel measures, that prevent  to study equations in Lebesgue and Sobolev spaces using approximation by convolution with mollifiers, singular integrals, and localization methods based on comparison of integrals of functions over balls with integrals over larger balls.  

Some classes of linear elliptic equations and linear parabolic Cauchy problems  have already been studied, mostly with data  in Lebesgue spaces for Gaussian measures or weighted Gaussian measures. In particular, concerning maximal  regularity, the celebrated infinite dimensional Meyer inequalities of \cite{Me}  allow to establish maximal $L^p$ regularity results for Ornstein--Uhlenbeck equations in the whole space, for $1<p<\infty$. See e.g. \cite{Bo}, to which we refer for the general theory of Gaussian measures and Sobolev spaces related to Malliavin Calculus. Maximal $L^2$ (and in some cases $L^p$,   $1<p<\infty$) regularity for different classes of Ornstein--Uhlenbeck equations, with different Sobolev spaces, were proved in \cite{S,DPZ3,CG,MVN}. 
Only a few papers are devoted to differential equations in open sets with boundary conditions. Let us mention \cite{DPGZ,Tal} for Dirichlet problems in spaces of continuous functions and \cite{BDPT1,BDPT2} for Neumann type problems in $L^2$ spaces, all of them for suitable classes of Ornstein--Uhlenbeck operators. 
 
In this paper we begin the study of  maximal $L^2$ regularity result for the Dirichlet problem with an elliptic operator ${\mathcal L}$, 
\begin{equation}
\label{e1e}
\left\{\begin{array}{l}
\lambda U-{\mathcal L} U=F,\quad\mbox{\rm in}\;  \mathcal O,
\\
\\
U=0,\quad\mbox{\rm on}\;\partial\mathcal O,
\end{array}\right.
\end{equation}
where $\lambda >0$ and $F$ are given, and $\O = \{ x\in X:\;G(x)<0\}$ is 
an open set with good boundary. 

Precisely, we fix a centered nondegenerate Gaussian measure $\mu$ in a Hilbert space $X$, we denote by $Q$ its covariance,  and  we consider the Ornstein--Uhlenbeck operator defined on good functions (for instance, smooth cylindrical functions) by
$$\mathcal L U (x) =\frac12\;\mbox{\rm Tr}\;[Q D^2 U(x)]-\frac12\;\langle  x, DU(x)  \rangle. $$
Similarly to the case $\O=X$, a weak solution to \eqref{e1e} is 
a function  $U \in  \oo{W}^{1,2}(\O, \mu)$ such that 
\begin{equation}
\label{sol_debole}
\int_{\O}\lambda U \, \psi \, d\mu + \frac{1}{2} \int_{\O} \langle D_H \varphi,  D_H \psi\rangle _H\,d\mu, 
= \int_{\O}F\,\psi\, d\mu , \quad \forall \psi \in  \oo{W}^{1,2}(\O, \mu).
\end{equation}
Here, $D_H$ is the gradient along the Cameron--Martin space $H= Q^{1/2}(X)$, and  $\oo{W}^{1,2}(\O, \mu)$ is a Sobolev space of functions ``vanishing at the boundary".  

Even in the case of simple open sets such as the unit ball, an explicit basis of $L^2(\O, \mu)$ made by eigenfunctions of $L^{\O}$, that could play the role of the Hermite polynomials and Wiener chaos decomposition used in the case $\O=X$, is not available. Then, we follow a completely different approach, that consists of two steps:
 
\vspace{3mm} {\em Step 1.} We find dimension free  $W^{2,2}$ estimates for finite dimensional problems  approximating   \eqref{e1e}; 

\vspace{3mm} {\em Step 2.}  We approach the weak solution to  \eqref{e1e} by the sequence of cylindrical functions that solve the finite dimensional problems.  

\vspace{3mm} 
Both steps are rather delicate. Let us go into details. 

\vspace{3mm} 
Concerning Step 1, we fix an orthonormal basis $\{e_k:\;k\in \N\}$ of $X$ consisting of eigenvectors of $Q$, $Qe_k = \lambda_ke_k$, and we consider the problems
\begin{equation}
\label{finitedim}
\left\{\begin{array}{l}
\lambda u_n - {\mathcal L}_n u_n = f_n, \quad {\rm in}\;O_n
\\
u_n =0 \quad {\rm at} \;\partial O_n
\end{array}\right. 
\end{equation}
where $f_n(\xi )= F(\sum_{k=1}^{n} \xi_k e_k)$, 
$g_n (\xi) = G(\sum_{k=1}^{n} \xi_k e_k)$, $O_n = \{\xi \in \R^n: \;g_n(\xi) <0\}$, and ${\mathcal L}_n$ is the finite-dimensional Ornstein-Uhlenbeck operator
\begin{equation}
\label{OUinRn}
({\mathcal L}_n u)(\xi)    =\frac12 \sum_{k=1}^{n} \lambda_k D_{kk}u(\xi)  -\frac12\;\sum_{k=1}^{n}\xi_kD_ku(\xi).
\end{equation}
Denoting by  $\mu_n= N_n(x)dx$   the Gaussian measure in $\R^n$ with mean $0$ and covariance matrix $Q_n = $ $diag(\lambda_1, \ldots, \lambda_n)$, we look for an estimate
\begin{equation}
\label{estfinitedim}
\|u_n\|_{W^{2,2}(O_n, \mu_n)}\leq K \|f_n\|_{L^{2}(O_n, \mu_n)}
\end{equation} 
with constant $K$ independent of $n$. Procedures relying on maximal regularity for elliptic operators in $L^p$ spaces with respect to the Lebesgue measure, such as e.g. in \cite{MPPS}, do not work, because the final constant $K$ depends on $n$ in an uncontrollable way. Instead, we follow a more direct approach, which is a  refinement of the approach of \cite{LMP}. Let us explain in the simple case of the unit ball $O_n =B(0,1)$ and $f\in C^{\infty}_c(B(0,1))$. Dimension free bounds for $\|u\|_{W^{1,2}(O_n, \mu_n)}$ are easily found. To estimate the second order derivatives we
differentiate both members of the differential equation in \eqref{finitedim} with respect to $x_h$, we multiply  by $D_hu_n\lambda_h$, we sum up and we integrate by parts, obtaining
$$\int_{B(0,1)} \mbox{\rm Tr}\;[(Q_nD^2u_n)^2] \, d\mu_n = \frac{1}{2}\int_{\partial B(0,1)} \langle D^2u_n  Q\xi, QDu_n\rangle N_n\,ds + \ldots $$
where the dots stand for other integrals that are  under control.  The boundary integral still contains second order derivatives of $u_n$, however using the identities $u_n = f_n =0$ at  $|\xi | =1$ we can express $ \langle D^2u_n  Q_n\xi , Q_nDu_n\rangle $ in terms of first order derivatives of $u_n$, and precisely 
$$ \langle D^2u_n  Q_n\xi , Q_nDu_n\rangle  = \frac{ \langle Q_n^{1/2}Du_n , Q_n^{1/2}\xi  \rangle ^2}{ |Q_n^{1/2}\xi |^2} \bigg(1- \mbox{\rm Tr}\,[Q_n]  + \frac{|Q_n\xi |^2}{|Q_n^{1/2}\xi |^2} \bigg):=
{\mathcal H}_n(\xi )\langle Q_n^{1/2}Du_n , Q_n^{1/2}\xi  \rangle ^2.$$
On the other hand, by a suitable trace lemma it is possible to estimate the boundary integral $\int_{\partial B(0,1)} \langle Q_n^{1/2}Du_n , Q_n^{1/2}\xi  \rangle ^2N_n\,ds $ as
$$\begin{array}{l}
\ds \int_{\partial B(0,1)} \langle Q_n^{1/2}Du_n , Q_n^{1/2}\xi  \rangle ^2N_n\,ds 
\\
\\
\ds \leq C_1 \int_{B(0,1)} \mbox{\rm Tr}\;[(Q_nD^2u_n)^2] \, d\mu_n \|f_n\|_{L^2(B(0,1), \mu_n)} + C_2  \|f_n\|_{L^2(B(0,1), \mu_n)}^2, \end{array}$$
with $C_1$, $C_2>0$ independent of $n$. Therefore, if 
\begin{equation}
\label{hcritico}
\sup_{\xi \in \partial B(0,1)} {\mathcal H}_n(\xi) \leq C_3, \quad \mbox{\rm with }C_3\;\mbox{\rm independent of }n, \end{equation}
we are done: using the Young inequality we get a dimension free estimate for \linebreak $\int_{B(0,1)} \mbox{\rm Tr}\;[(Q_nD^2u_n)^2] \, d\mu_n $ and \eqref{estfinitedim} follows. 

Coming back to general open sets, this procedure leads to the functions 
$${\mathcal H}_n(\xi):= -2\frac{{\mathcal L}_n g_n(\xi)}{ |Q_n^{1/2}g_n(\xi) |^2} + \frac{ D^2g_n(\xi) (Q_nDg_n(\xi),  Q_nDg_n(\xi)) }{ |Q_n^{1/2}Dg_n(\xi) |^4}, \quad \xi \in \partial  O_n .$$
Assuming  that  ${\mathcal H}_n$ is bounded from above on $\partial  O_n $ by a constant independent of $n$, we prove that  \eqref{estfinitedim} holds with $K$ independent of $n$. 
 
Let us discuss the geometrical meaning of this assumption. If we consider the Cameron--Martin scalar product in $\R^n$, $( \xi, \eta):= \langle Q^{-1/2}_n\xi, Q^{-1/2}_n\eta\rangle $, at any point $\xi\in \partial  O_n$ the exterior unit normal vector is just $\nu_n = Q_n Dg_n(\xi)/|Q_n^{1/2}Dg_n|$. The function ${\mathcal H}_n$ turns out to be minus the Gaussian divergence of $\nu_n$, divided by 
$|Q_n^{1/2}Dg_n|$. So, our assumption may be seen as a condition on the ``Gaussian mean curvatures" of the approximating cylindrical sets, hence it is a condition on the ``Gaussian mean curvature" on $\O$. 
Note that this condition is one--sided. Both in finite and in infinite dimensions we do not need that the boundary is uniformly $C^2$. 

Checking this assumption in meaningful examples gives some surprise, and shows important differences between the finite  and the infinite dimensional case. For instance, if $\O $ is the  open ball centered at the origin with radius $r$,  the set $ O_n$ is just  $B(0,r)\subset \R^n$,  and  the suprema $h_n$ of ${\mathcal H}_n$ on the spherical surfaces are bounded by a constant independent of $n$ only if some relationship between $r$ and the eigenvalues of $Q$ is satisfied. In particular if $r^2>$ Tr$\,Q$,  then $\lim_{n\to \infty} h_n = +\infty$ and our condition is not satisfied. See Section \ref{Examples}.

\vspace{2mm}

Let us consider  the second step. It consists in approximating the weak solution $U$ to \eqref{e1e} by the cylindrical functions $U_n$ defined by $U_n(x) = u_n (x_1, \ldots, x_n) $   if $(x_1, \ldots, x_n)   \in O_n$,
$U_n(x) =0$ if $(x_1, \ldots, x_n)\notin O_n$. As usual, here we set  $x_k:=\langle x, e_k\rangle$ for $k\in \N$ and we denote by $P_n$ the orthogonal projection on the subspace   spanned by $e_1 , \ldots  e_n$. The restrictions of $U_n$ to the cylindrical sets  
 $\O_n := \{x\in X:\; G (x_1, \ldots, x_n)<0\} = (G\circ P_n)^{-1}(-\infty, 0)$ are the weak solutions to 
$$\left\{\begin{array}{l}
\lambda U_n-\mathcal L U_n= F\circ P_n,\quad\mbox{\rm in}\;  \mathcal O_n,\\
\\
U_n=0,\quad\mbox{\rm on}\;\partial\mathcal O_n,
\end{array}\right.$$
and  their $W^{1,2}(X, \mu)$ and  $W^{2,2}(\O_n, \mu)$ norms are bounded by a constant independent of $n$, by the finite dimensional estimates. On one hand, this allows to find a subsequence that converges weakly  in $W^{1,2}(X, \mu)$ to a limit function $V$, whose restriction to $\O$ belongs to $W^{2,2}(\O, \mu)$. On the other hand, showing that the restriction of $V$ to $\O$ is the weak solution to \eqref{e1e} is not obvious. If we try to pass to the limit using the definition of weak solution \eqref{sol_debole}, we meet difficulties caused by  the test functions: if $\Phi\in \oo{W}^{1,2}(\O, \mu)$, its standard $n$-dimensional approximations $\varphi_n$ do not vanish in general at  the boundary of $O_n$, and   integrating by parts $(\lambda u_n - {\mathcal L}_n u_n)\varphi_n$ over $O_n$ we obtain a sequence of boundary integrals that is hard to control. 

We overcome this problem using a probabilistic representation formula for the resolvent $R(\lambda, L^{\O})$, recently proved in \cite{DPL}. Such a formula involves the Ornstein--Uhlenbeck process, solution to the  stochastic differential equation
\begin{equation}
\label{e1.6}
dX (t,x)= -\frac{1}{2} X (t,x)dt+Q^{1/2}dW(t),\quad X(0,x)=x, 
\end{equation}
(where $W(t)$ is a  cylindrical Wiener process in $X$, see the Appendix for details),
and its entrance time in the complement of $\overline{\O}$, 
\begin{equation}
\label{e1.7}
\tau_x :=\inf\{t\ge 0:\;X(t,x)\in \overline{\O}^c\},\quad x\in \O .
\end{equation}
Then, the semigroup $T^{\O}(t)$ generated by $L^{\O}$ in $L^2(\O, \mu)$ satisfies 
\begin{equation}
\label{e1.8}
T^{\O}(t) F(x)= \E[F(X (t,x))\one_{\tau_x \ge t}]
= 
 \int_{\{\tau_x \ge t\}}F(X (t,x))d\P,\quad\forall\;x\in \O,
\end{equation}
for each  $F\in C_b(\overline{\O})$, $t>0$. Accordingly, the weak solution $U$ to  \eqref{e1e}, which coincides with the resolvent $R(\lambda, L^{\O})F$, is given by 
\begin{equation}
\label{1.9w}
U(x) = \int_{0}^{+\infty} e^{-\lambda t}(T^{\O}(t) F)(x)dt
\end{equation} 
Using the representation formula \eqref{1.9w}  and tools from the theory of Gaussian measures in Banach spaces, we may pass to the limit if $F\in C_b(X)$ and we prove that the restriction of $V$ to $\O$ is the weak solution to \eqref{e1e}. 
Since the restrictions to $\O$ of functions $F\in C_b(X)$ are dense in $L^2(\O, \mu)$, this concludes the proof. 

However, also this step is not straightforward. Indeed, we rewrite the representation formula \eqref{1.9w}  as 
$$T^{\O}(t) F(x)=  \int_{\Lambda_t}F(\eta(t )) \mu_{t,x}(d\eta),$$
where $ \mu_{t,x}$ is the law of $X(\cdot , x)$ in $C([0, t];X)$, and 
$\Lambda_t = \{ \eta \in C([0, t];X): \;\sup_{0\leq s\leq t} G(\eta(s))\leq 0\}$. Similarly, for each $n\in \N$ we have
$$T^{\O_n}(t) (F\circ P_n)(x)=  \int_{\Lambda_t}F(\eta(t )) \mu_{t,x}^{(n)}(d\eta),$$
where $ \mu_{t,x}^{(n)}$ is the law of $P_nX(\cdot, x) = X(\cdot, P_nx)$ in $C([0, t];X)$. Then we prove that 
$ \mu_{t,x}^{(n)}$ and $\mu$ are Gaussian, that $ \mu_{t,x}^{(n)} \rightharpoonup \mu_{t,x}$ as $n\to \infty$,  and that $\mu_{t,x}(\partial \Lambda_t )=0$. This enables us to pass to the limit in the above representation formula, obtaining that $T^{\O_n}(t) (F\circ P_n)(x)\to T^{\O}(t) F(x)$ for each $x\in \O$ as $n\to \infty$, and then to conclude.

\vspace{2mm}

The paper ends with some examples. We treat the cases of half--spaces and, more generally, of regions below graphs of regular functions, as well as spheres and ellipsoids. In particular, we   
prove that our sufficient condition for maximal Sobolev regularity is satisfied if $\O$ is any half-space, and   if $\O$ is a   ball $B(x_0,r)$ provided a suitable relation between $x_0$, $r$, and the eigenvalues of $Q$ holds. Other examples with  $X= L^2((0,1),dx)$ are sets of the type $\O =\{x\in L^2((0,1),dx): \;\int_0^1 g(x(\xi))d\xi < r\}$ for suitable nonlinear functions $g:\R\mapsto \R$.


\section{Notation and preliminaries}


\subsection{$H$-regular functions} Let  $X$ be a separable real Banach space,  let $\mu$ be a centered non-degenerate Gaussian measure in $X$, and let $H\subset X$ be the associated Cameron--Martin space.

Together with regular functions from $X$ to another Banach  space $E$, we shall consider also $H$-regular functions. 

For $0<\alpha<1$ we say that $F:X\mapsto E$ is locally $\alpha$-H\"older continuous along $H$ if for each $x_0\in X$ there is $r>0$ such that 
$$\sup_{x\in B(x_0, r), \,h\in H\setminus\{0\}} \frac{\|F(x+h)- F(x)\|_E}{|h|^{\alpha}_H}<\infty.$$
A function $F:X\mapsto E$ is $H$-Fr\'echet differentiable at $x_0$ if there exists $L\in {\mathcal L}(H, E)$ such that 
$$\sup_{h\in H,\, |h|_H =1} \|F(x_0+th) - F(x_0) - tLh\|_E = o(t), \quad {\rm as}\;t\to 0.$$
In particular, if $E=\R$ we have $Lh = \langle y, h\rangle_H$ for some $y\in H$, which is denoted by $D_HF(x_0)$. 

$F:X\mapsto \R$ is twice $H$-Fr\'echet differentiable at $x_0$ if it is $H$-Fr\'echet differentiable in a neighborhood $\mathcal N$ of $x_0$ and $D_HF: {\mathcal N}\mapsto {\mathcal L}(H, \R)$  is $H$-Fr\'echet differentiable at $x_0$. The second order derivative is denoted by $D^2_HF(x_0)$. 

If $X$ is a Hilbert space and $Q$ is the covariance of $\mu$, then $H= Q^{1/2}(X)$. So, if 
$F:X\mapsto \R$ is twice  Fr\'echet differentiable, then it is twice  $H$-Fr\'echet differentiable and 
$D_HF(x) = QDF(x)$, $ D^2_HF(x) (h,k) = \langle QD^2F(x)h, k\rangle $, where $DF(x)$ and $D^2F(x)$ are the Fr\'echet first and second order derivatives of $F$ at $x$. 

We say that $F\in C^{2+\alpha}_{H, loc}(X, \R)$ if $F$ is twice  $H$-Fr\'echet differentiable and $D^2_HF$ is  is locally $\alpha$-H\"older continuous along $H$. 

\subsection{Sobolev spaces}

From now on, $X$ is a separable Hilbert space, with inner product $\langle, \cdot, \cdot \rangle$ and norm $\| \cdot \|$. $\mu$ is a centered non-degenerate Gaussian measure in $X$ with covariance $Q$, and $H = Q^{1/2}(X)$ is the associated Cameron--Martin space, endowed with the scalar product  $\langle h_1, h_2\rangle _H:= \langle Q^{-1/2}h_1,  Q^{-1/2}h_2\rangle$ and the associated norm $|h|_H := \|Q^{-1/2}h\|$.

Let us recall the definition and some properties of the Sobolev spaces that we need. For the general treatment of Gaussian measures we refer to \cite{Bo}. 

We fix once and for all an orthonormal basis of $X$ consisting of eigenvectors of $Q$. We consider an ordering of the basis such that $Q e_k = \lambda_ke_k$, for each $k\in \N$, and  the sequence $(\lambda_k)$ decreases. 
We set
$$x_k := \langle x, e_k\rangle, \quad k\in \N,  \;x\in X, \;k\in \N .$$
If $F: X\mapsto \R$ is a differentiable function, for every $k\in \N$ and $x\in X$ we denote by $D_kF(x)$ its derivative in the direction $e_k$. So, $D_kF(x) =\langle  D F(x), e_k\rangle$.  

The Sobolev spaces $W^{1,2}(X, \mu)$ and $W^{2,2}(X, \mu)$ may be defined in several ways. We 
recall here the definition through the weak derivatives. 

Let $U\in L^2(X, \mu)$. We say that a function $F\in L^1(X, \mu)$ is the  weak derivative of $U$ in the direction of $e_k$ if for every $\Psi\in C^1_b(X)$ (the space of the bounded continuously differentiable functions from $X$ to $\R$ with bounded gradient) we have
$$\int_X U\,D_k\Psi\,d\mu = -\int_X F\,\Psi\, d\mu + \frac{1}{\lambda_k} \int_X x_k U\,\Psi\,d\mu .$$
In this case, we still set $F=D_kU$. The space $W^{1,2}(X, \mu)$ is the set of all $U\in L^2(X, \mu)$ having 
weak derivatives $D_kU \in L^2(X,\mu)$ and such that 
$$\| U \|_{W^{1,2}(X, \mu)} := \bigg( \int_{X} |U|^2 d\mu +  \int_{X} \sum_{k=1}^{\infty} \lambda_k |D_kU|^2 d\mu\bigg)^{1/2} <\infty .$$
Similarly, the space $W^{2,2}(X, \mu)$ is the set of all $U\in W^{1,2}(X, \mu)$ such that every weak derivative 
$D_kU$ is weakly differentiable in every direction  $e_h$, and denoting by $D_{hk}U$ the second order weak derivatives, $u$  satisfies
$$\| U \|_{W^{2,2}(X, \mu)} := \bigg( \| U \|_{W^{1,2}(X, \mu)}^2 +  \int_{X} \sum_{h,k=1}^{\infty} \lambda_k \lambda_h |D_{hk}U|^2 d\mu \bigg)^{1/2} <\infty .$$
$W^{1,2}(X, \mu)$ and $W^{2,2}(X, \mu)$ are Hilbert spaces, with the scalar products
$$\langle U, V\rangle _{W^{1,2}(X, \mu)} :=  \int_{X} U \, V\,  d\mu +  \int_{X} \sum_{k=1}^{\infty} \lambda_k D_kU \,D_kV\,d\mu , $$
$$\langle U, V\rangle _{W^{2,2}(X, \mu)} := \langle U, V\rangle _{W^{1,2}(X, \mu)} +    \int_{X} \sum_{h, k=1}^{\infty} \lambda_k \lambda_h D_{hk}U \,D_{hk}V\,d\mu. $$
Note that $\sum_{k=1}^{\infty} \lambda_k  |D_kU(x)|^2 = |D_H U(x)|_H^2$ if $U$ is differentiable at $x$ along $H$, and \linebreak
$ \sum_{h,k=1}^{\infty} \lambda_k   \lambda_h |D_{hk}U(x)|^2 = \|D^2_HU(x)\|_{HS}^2$ if $U$ is twice differentiable at $x$ along $H$, where we denote by $\|\cdot\|_{HS}$ the Hilbert-Schmidt norm on the bilinear Hilbert-Schmidt mappings from $H$ to $\R$. 

$W^{1,2}(X, \mu)$ and $W^{2,2}(X, \mu)$  coincide with the spaces $D^{2,1}(\mu, \R)$ and $D^{2,2}(\mu, \R)$ of \cite{Bo}, respectively  (\cite[Cor. 5.4.7]{Bo}), hence they coincide with the usual Sobolev spaces of the Malliavin calculus  (\cite[Thm. 5.7.2]{Bo}). 
We refer to \cite{M,Bo} for   systematic treatments of such Sobolev spaces.

If $\O$ is any open subset of $X$, the definition of the weak derivatives and of the spaces  $W^{1,2}(\O, \mu)$ and $W^{2,2}(\O, \mu)$ is similar, the only difference being that  the test functions $\Psi\in C^1_b(\overline{\O})$ should vanish at the boundary $\partial \O$, or equivalently in a neighborhood of $\partial \O$. It is not hard to see that 
$W^{1,2}(\O, \mu)$ and $W^{2,2}(\O, \mu)$ are complete. 

To treat Dirichlet problems, we need functions that vanish at the boundary, in some suitable sense. 
 If  $U:\O \mapsto \R$, we define its null extension to the whole $X$ by $\widetilde{U}$, namely we set 
$$\widetilde{U}(x)=\left\{\begin{array}{ll}
U(x), & \mbox{\rm if}\;x\in \O,\\
0,& \mbox{\rm if}\;x\notin \O.
\end{array}\right.$$

\begin{Definition}
\label{Def:SobNulli}
We denote by $\oo{W}^{1,2}(\O, \mu)$ the set of the functions $U\in L^2(\O, \mu)$ whose null extension $\widetilde{U}$ belongs to  $W^{1,2}(X, \mu)$. It is endowed with the scalar product $\langle U,V\rangle _{\oo{W}^{1,2}(\O, \mu)} := \langle \widetilde{U} ,\widetilde{V}\rangle_{W^{1,2}(X, \mu)}$. 
\end{Definition}

Since  $W^{1,2}(X, \mu)$ is a Hilbert space, then $ \oo{W}^{1,2}(\O, \mu)$ is a Hilbert space, too. 
 

\section{Finite-dimensional estimates}
\label{sect:finite}

In this section we consider problems in open subsets of $\R^n$, with  fixed $n$. $\mu$ is  the Gaussian measure in $\R^n$ with mean $0$ and covariance $Q := $diag$(\lambda_1, \ldots, \lambda_n)$, and ${\mathcal L}$ is the associated Ornstein-Uhlenbeck operator, i.e.
$${\mathcal L}\varphi (x) = \frac{1}{2} \sum_{k=1}^{n}\lambda_k D_{kk}\varphi(x) -  \frac{1}{2} \sum_{k=1}^{n}x_kD_k\varphi(x).$$
 We consider weak solutions to Dirichlet problems in an open set 
$$O= \{x\in \R^n:\;g(x) <0\},$$
where $g$ is a smooth ($C^{2+\alpha}_{loc}(\R^n)$, for some $\alpha >0$) function whose gradient does not vanish at the boundary  $\partial O$. 
Moreover we need other conditions guaranteeing global regularity of $\partial O = g^{-1}(0)$. Precisely, we assume that there exists $ \delta >0$ such that 
\begin{equation}
\label{Ondelta}
  \sup_{| g(x)|\leq \delta}  |Q^{1/2}D  g| :=a <\infty, \;  \sup_{| g(x)|\leq \delta} \| Q^{1/2} D^2 g(x) Q^{1/2}\|_{{\mathcal L}(\R^n)}:=b <\infty .
\end{equation}
(Here and in the following, ${\mathcal L}(\R^n)$ is endowed with the Hilbert-Schmidt norm, 
$\|M\|_{{\mathcal L}(\R^n)} = ( \sum_{i, j=1}^{n} m_{ij}^2)^{1/2}$ if $ m_{ij}= \langle Me_i, e_j\rangle$).

Assumption \eqref{Ondelta} yields $g\in C^2_b( g^{-1}([-\delta, \delta]))$, it is obviously verified if $O$ is bounded, and it implies that   $\partial O$ is the level set of a modification of $g$ that satisfies dimension free bounds in the whole $O$, as the next  lemma shows.

\begin{Lemma}
\label{Le:g}
Let $O= \{x\in \R^n:\; g(x) <0\}$ where $ g$ is a  $C^{2+\alpha}_{loc}$ function satisfying  \eqref{Ondelta}, and whose gradient does not vanish at  $ g^{-1}(0)$.
 Then there exists a $C^{2+\alpha}_{loc}$ function $\widetilde{g}$ such that $O= \{x\in \R^n:\;\widetilde{g}(x) <0\}$, $ g= \widetilde{g}$ in a neighborhood of $\partial O$, and 
\begin{equation}
\label{A,B}
\begin{array}{l}
A:=  \sup_{x\in O} |Q^{1/2}D\widetilde{g}(x)| \leq a,
\\
\\
 B:=  \sup_{x\in O} \| Q^{1/2} D^2\widetilde{g}(x) Q^{1/2}\|_{{\mathcal L}(\R^n)} \leq b+ 3a^2/\delta .
 \end{array}
\end{equation}
\end{Lemma}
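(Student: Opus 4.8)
The idea is to modify $g$ only far from $\partial O$, leaving it untouched on $g^{-1}([-\delta/2,\delta/2])$ where \eqref{Ondelta} already gives the desired bounds, and rescaling it elsewhere so that the rescaled function cannot exceed a controlled value. Concretely, I would fix a smooth, nondecreasing, globally Lipschitz ``cut-off profile'' $\phi:\R\to\R$ with $\phi(t)=t$ for $|t|\le \delta/2$, $\phi(t)\to \pm c$ as $t\to\pm\infty$ for some constant $c\le \delta$, $0\le \phi'\le 1$, and $\phi$ strictly increasing on $\{|t|\le\delta\}$ so that $\phi$ and $g$ have the same sign everywhere; then set $\widetilde g:=\phi\circ g$. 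Since $\phi$ is a diffeomorphism near $0$ onto its image and $\phi(0)=0$, we get $\widetilde g^{-1}(0)=g^{-1}(0)=\partial O$, the sign condition gives $O=\{\widetilde g<0\}$, and $\widetilde g=g$ in the neighborhood $\{|g|<\delta/2\}$ of $\partial O$; moreover $\widetilde g\in C^{2+\alpha}_{loc}$ because it is a $C^\infty$ function composed with a $C^{2+\alpha}_{loc}$ one.

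Next I would compute the derivatives via the chain rule: $D\widetilde g=\phi'(g)\,Dg$ and $D^2\widetilde g=\phi''(g)\,Dg\otimes Dg+\phi'(g)\,D^2g$. For the first-order bound, $|Q^{1/2}D\widetilde g|=|\phi'(g)|\,|Q^{1/2}Dg|\le |Q^{1/2}Dg|$ since $0\le\phi'\le 1$; on $\{|g|\le\delta\}$ this is $\le a$ by \eqref{Ondelta}, and on $\{|g|>\delta\}$ we have $\phi'(g)=0$ (choosing $\phi$ constant there), so $D\widetilde g=0$ and the bound $A\le a$ holds on all of $O$. For the second-order term, on $\{|g|>\delta\}$ again everything vanishes, so it suffices to work on $\{|g|\le\delta\}$: there
\begin{equation*}
\|Q^{1/2}D^2\widetilde g\,Q^{1/2}\|_{{\mathcal L}(\R^n)}\le |\phi''(g)|\,|Q^{1/2}Dg|^2+|\phi'(g)|\,\|Q^{1/2}D^2g\,Q^{1/2}\|_{{\mathcal L}(\R^n)}\le \|\phi''\|_\infty a^2+b,
\end{equation*}
using \eqref{Ondelta} for both factors (the first estimate uses $\|Q^{1/2}(v\otimes v)Q^{1/2}\|_{{\mathcal L}(\R^n)}=|Q^{1/2}v|^2$). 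The remaining point is to construct $\phi$ with $\|\phi''\|_\infty\le 3/\delta$: since $\phi'=1$ on $[-\delta/2,\delta/2]$ and $\phi'=0$ on $[\delta,\infty)$, $\phi'$ must drop from $1$ to $0$ over an interval of length $\delta/2$, so $\|\phi''\|_\infty\ge 2/\delta$; a standard explicit construction (e.g.\ a smoothed piecewise-linear profile, or an integrated bump) achieves $\|\phi''\|_\infty\le 3/\delta$, giving $B\le b+3a^2/\delta$.

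The only mildly delicate point is the bookkeeping at the level $\{|g|=\delta\}$ where the piecewise definition of $\phi$ is glued: one must take $\phi$ to be genuinely $C^\infty$ (or at least $C^{2}$) across $t=\pm\delta$ and identically constant for $|t|\ge\delta$, which forces $\phi$ to flatten out already slightly before $\delta$; this is harmless because the estimates \eqref{Ondelta} are assumed on the closed band $\{|g|\le\delta\}$, so there is room. I do not expect any real obstacle here — the construction of the cut-off profile $\phi$ with the stated constant is the ``hard part'' only in the sense that one has to be careful to squeeze the second-derivative constant down to $3/\delta$, and even that is not sharp-sensitive since only finiteness of $b+3a^2/\delta$ matters for the dimension-free application later. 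A reader could also replace $3/\delta$ by any explicit constant $C(\delta)$ without affecting the sequel.
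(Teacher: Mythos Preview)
Your proposal is correct and is essentially identical to the paper's proof: the paper also sets $\widetilde g=\eta\circ g$ for an odd nondecreasing $C^\infty$ function $\eta$ with $\eta(r)=r$ on $[0,\delta/2]$, $\eta\equiv\delta$ for $r\ge\delta$, $\|\eta'\|_\infty\le 1$, $\|\eta''\|_\infty\le 3/\delta$, and then applies the chain rule exactly as you do. Your discussion is more detailed (the existence of the profile with $\|\phi''\|_\infty\le 3/\delta$, the gluing at $|t|=\delta$), but the argument is the same; the only cosmetic slip is the phrase ``strictly increasing on $\{|t|\le\delta\}$'', which is incompatible with $\phi$ being $C^\infty$ and constant for $|t|\ge\delta$, and which you yourself correct later --- simply drop ``strictly''.
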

\begin{proof} Let $\eta \in C^{\infty}(\R)$ be an odd nondecreasing function such that $\eta(r) = r$ for $0\leq r\leq \delta/2$, $\eta (r)=\delta $ for $r\geq \delta$, and $\|\eta'\|_{\infty} \leq 1$, $\|\eta ''\|_{\infty} \leq 3/\delta $. 
Set  $\widetilde{g}  = \eta \circ g$. Then
$$D_i \widetilde{g} =  (\eta ' \circ g)D_ig, \quad D_{ij}g = ( \eta ' \circ g)D_{ij}g
+ ( \eta '' \circ g)D_i gD_j g, \quad i,j=1, \ldots n, $$
so that  $\widetilde{g}$ enjoys the claimed properties. \end{proof}

Let us introduce a weighted surface measure on $\partial O$:
\begin{equation}
\label{Lebesgue} 
d\sigma  = N (x)\frac{|Q ^{1/2}D g(x)|}{|D g(x)|} ds
\end{equation}
where $N (x) = (2\pi \det Q )^{-1/2}\exp( -|Q ^{-1/2}x|^2/2)$ is the Gaussian weight and $ds$ is the usual surface Lebesgue measure. So, $\sigma$ is a weighted Lebesgue surface measure, independent of $g$ since  if $\partial O$ is a level surface of another good function $g_1$, then   $D g_1$ is a scalar multiple of $D g$ at any point of the boundary. Moreover, we have 
$$\frac{1}{\lambda_1^{ 1/2}} \leq \frac{|D g(x)|}{|Q ^{1/2}D g(x)|} \leq 
\frac{1}{\lambda_n^{ 1/2}}, \quad x\in \partial O ,$$
so that the spaces $L^p(\partial O, d\sigma)$ are equivalent to the spaces $L^p(\partial O, N\,ds)$, but one of the equivalence constants blows up as  $n\to \infty$ and this is important in view of the infinite dimensional case. 

Using the surface measure $d\sigma$ the integration by parts formula reads as 
\begin{equation}
\label{parti} 
\int_{O } D_k\varphi \,\psi \,d\mu = -\int_{O } \varphi \,D_k\psi \,d\mu +  \frac{1}{\lambda_k} \int_{O } x_k\varphi \,\psi  \,d\mu + \int_{\partial O } \frac{D_k g}{|Q^{1/2}D g| }\varphi \,\psi \,d\sigma , 
\end{equation}
for each $\varphi$, $\psi \in  W^{1,2}(O, \mu)$, one of which with bounded support. Indeed, in  this case the boundary integral is meaningful, since $W^{1,2}(O, \mu)$ $\subset $ $W^{1,2}_{loc}(O,dx)$ so that  the trace at the boundary of any function in $W^{1,2}(O, \mu)$ belongs to $L^2_{loc}(\partial O, ds) =  L^2_{loc}(\partial O, d\sigma)$.

Applying \eqref{parti} with $\varphi$ replaced by $\lambda_k  D_k\varphi $ and summing up, we find
\begin{equation}
\label{partiL}
\begin{array}{l}
\ds{ \int_{ O }{\mathcal L}\varphi \, \psi \, d\mu = }
\\
\\
= \ds{
-  \frac{1}{2} \int_{ O } \langle Q^{1/2}D \varphi, Q^{1/2}D\psi \rangle d\mu + \frac{1}{2}\int_{\partial  O } \frac{  \langle Q^{1/2}D\varphi, Q^{1/2}D g \rangle }{|Q^{1/2}D g| }\psi \, d\sigma }, 
\end{array}
\end{equation}
for every $\varphi \in W^{2,2}(O , \mu)$, $\psi\in W^{1,2}(O , \mu)$, one of which  with bounded support.


\subsection{Maximal Sobolev regularity}
\label{Sobolev}

In this section we give dimension free estimates for the 
weak solution $u\in \oo{W}^{1,2}(O, \mu)$  to 
 \begin{equation}
\label{Diri_n}
\left\{\begin{array}{l}
\lambda u-{\mathcal L} u  = f,\quad\;\mbox{\rm in}\; O,\\
\\
u=0,\quad\;\mbox{\rm on}\;\partial  O.
\end{array}\right.
\end{equation}
with $\lambda >0$ and $f\in   L^2(O, \mu)$. Since $C^{\infty}_c(O)$ is dense in $ L^2(O, \mu)$ by \cite[Lemma 2.1]{LMP}, we may assume that 
$f\in C^{\infty}_c(O)$. 
Since the Gaussian measure $\mu$ is locally equivalent to the Lebesgue measure, and the boundary of $O$ is smooth enough,   the standard regularity results about elliptic equations imply that  $u\in C^2(\overline{O} )$, it is smooth in $O$, and it is a classical solution to \eqref{Diri_n}. 
 In particular, 
$u\in  W^{2,2}_{loc}( O, \mu)$, that is for   every ball $B\subset \R^n$ with nonempty intersection with $O$, the restriction of $u$ to $O\cap  B$ belongs to $ W^{2,2} (O\cap B, \mu)$. 
Since $O$ is possibly unbounded, regular functions   do not necessarily belong to   $W^{2,2}( O, \mu)$. So, we   introduce a smooth cutoff function $ \theta :\R^n \to \R$   such that 
$$0\leq \theta (x)\leq1, \;\; |D \theta(x)| \leq 2, 
\;\; \theta \equiv 1 \;\mbox{\rm  in}\; B(0,1), \;\; \theta \equiv 0 \;\mbox{\rm outside}\; B(0,2) $$
and we set, for $R>0$,  
$$\theta_R(x) = \theta(x/R), \quad x\in \R^n.$$

To begin with, $W^{1,2}$ estimates are easy. Taking $u$ as a test function in the definition of weak solution, we get 
$$\lambda \int_O u^2d\mu + \frac{1}{2} \int_O |Q^{1/2}D u|^2 d\mu = \int_O f u\, d\mu, $$
so that
 \begin{equation}
\label{stimaX1}
(i)\; \int_Ou^2 d\mu \leq \frac{1}{\lambda^2}\|f\|_{L^2(O, \mu)}^2, \quad (ii)\;  \int_O |Q^{1/2}D u|^2 d\mu \leq \frac{2}{\lambda} \|f\|_{L^2(O, \mu)}^2.
 \end{equation}

Localized estimates that involve the second order derivatives of $u$ are given by the following proposition.

\begin{Proposition}
\label{p3.1}
For every   $f\in C^{\infty}_{c}(O)$ and $\eps \in (0, 1)$ there exists $R_0$ such that for $R>R_0$ the solution $u$ to \eqref{Diri_n} satisfies 
\begin{equation}
\label{e3.1}
\begin{array}{l}
\ds 
\bigg(  \frac12 -\frac{\eps}{2} \bigg) \int_O \theta_R^2\mbox{\rm Tr}\;[(Q D^2u)^2]d\mu 
 \\
\\
\ds \leq \bigg( 4 + \frac{\eps}{\lambda}\bigg) \|f\|^{2}_{L^2(\O, \mu)} +  \frac12\int_{\partial O} \theta_R^2  \frac{\langle D^2u\cdot QD g,  QD u  \rangle }{|Q^{1/2}D g| }\, d\sigma.
\end{array} 
\end{equation}
\end{Proposition}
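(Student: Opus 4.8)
The plan is to differentiate the equation, test it against the first derivatives of $u$ weighted by $\theta_R^2$, and integrate by parts: the second derivatives that survive will assemble into $\int_O\theta_R^2\,\mbox{\rm Tr}[(QD^2u)^2]\,d\mu$ on one side and the boundary integral of \eqref{e3.1} on the other, everything else being of lower order. Since $C^\infty_c(O)$ is dense in $L^2(O,\mu)$ we assume $f\in C^\infty_c(O)$, so that $u\in C^2(\overline O)\cap C^\infty(O)$ solves \eqref{Diri_n} classically, $u\in W^{2,2}_{loc}(O,\mu)$, and, crucially, $\int_O\theta_R^2\,\mbox{\rm Tr}[(QD^2u)^2]\,d\mu<\infty$ for every $R>0$ because $\theta_R$ has bounded support and $D^2u$ is continuous on the compact set $\overline O\cap\overline{B(0,2R)}$. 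The first order estimates \eqref{stimaX1} are already available.

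First I would differentiate the PDE in the direction $e_h$, getting $(\lambda+\tfrac12)D_hu-{\mathcal L}(D_hu)=D_hf$; multiply by $\lambda_h D_hu\,\theta_R^2$, sum over $h=1,\dots,n$ and integrate on $O$. The term $\sum_h\lambda_h\int_O{\mathcal L}(D_hu)\,D_hu\,\theta_R^2\,d\mu$ is transformed by \eqref{partiL} (applicable since $\psi=\lambda_hD_hu\,\theta_R^2$ has bounded support, after a routine truncation of $D_hu$ away from $\mbox{\rm supp}\,\theta_R$): it produces $-\tfrac12\int_O\theta_R^2\,\mbox{\rm Tr}[(QD^2u)^2]\,d\mu$, the boundary integral $\tfrac12\int_{\partial O}\theta_R^2\frac{\langle D^2u\cdot QDg,\,QDu\rangle}{|Q^{1/2}Dg|}\,d\sigma$, and a cross term $-\tfrac12\int_O\langle(QD^2u)(QDu),D\theta_R^2\rangle\,d\mu$ from differentiating $\theta_R^2$. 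On the other side, $\sum_h\lambda_h\int_O D_hf\,D_hu\,\theta_R^2\,d\mu$ is integrated by parts via \eqref{parti} with no boundary contribution ($f$ is compactly supported in $O$); the resulting term $-\int_O f\,\mbox{\rm Tr}[QD^2u]\,\theta_R^2\,d\mu$ is rewritten using the equation, $\mbox{\rm Tr}[QD^2u]=2(\lambda u-f)+\langle x,Du\rangle$, the two $\langle x,Du\rangle$ contributions cancel, and one is left with $-2\lambda\int_O fu\,\theta_R^2\,d\mu+2\int_O f^2\theta_R^2\,d\mu-\int_O f\,\langle QDu,D\theta_R^2\rangle\,d\mu$. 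Collecting everything yields an identity for $\tfrac12\int_O\theta_R^2\,\mbox{\rm Tr}[(QD^2u)^2]\,d\mu$ whose right–hand side also contains $-(\lambda+\tfrac12)\int_O|Q^{1/2}Du|^2\theta_R^2\,d\mu$.

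To conclude, discard the nonpositive term $-(\lambda+\tfrac12)\int_O|Q^{1/2}Du|^2\theta_R^2\,d\mu$, and bound $-2\lambda\int_O fu\,\theta_R^2\,d\mu\le 2\lambda\|u\|_{L^2(O,\mu)}\|f\|_{L^2(O,\mu)}\le 2\|f\|^2_{L^2(O,\mu)}$ by \eqref{stimaX1}(i) and $2\int_O f^2\theta_R^2\,d\mu\le 2\|f\|^2_{L^2(O,\mu)}$; together these give the constant $4$. The two $D\theta_R^2$ terms are supported in the annulus $\{R\le|x|\le2R\}$, where $|D\theta_R^2|\le 4/R$: for $\int_O f\langle QDu,D\theta_R^2\rangle\,d\mu$, Cauchy--Schwarz together with $|QDu|\le\lambda_1^{1/2}|Q^{1/2}Du|$ and \eqref{stimaX1}(ii) gives a bound of order $R^{-1}\|f\|^2_{L^2(O,\mu)}$; for the term linear in $D^2u$, writing $QD^2u=Q^{1/2}(Q^{1/2}D^2uQ^{1/2})Q^{-1/2}$ and using $\|Q^{1/2}D^2uQ^{1/2}\|_{HS}^2=\mbox{\rm Tr}[(QD^2u)^2]$ yields $|(QD^2u)(QDu)|\le\lambda_1^{1/2}(\mbox{\rm Tr}[(QD^2u)^2])^{1/2}|Q^{1/2}Du|$, which after Young's inequality and \eqref{stimaX1}(ii) is bounded by $\frac{C}{R}\big(\int_O\theta_R^2\,\mbox{\rm Tr}[(QD^2u)^2]\,d\mu+\|f\|^2_{L^2(O,\mu)}\big)$. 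Choosing $R_0$ large enough (only in terms of $\eps$, $\lambda$ and the top eigenvalue $\lambda_1$) makes the first piece $\le\tfrac\eps2\int_O\theta_R^2\,\mbox{\rm Tr}[(QD^2u)^2]\,d\mu$, which can be moved to the left using its finiteness noted above, while the remaining $\|f\|^2$ pieces sum to at most $\tfrac\eps\lambda\|f\|^2_{L^2(O,\mu)}$; this is exactly \eqref{e3.1}. The main difficulty is not any one calculation but the bookkeeping that keeps the argument dimension free: the only geometric constant permitted to enter is $\lambda_1$, which does not depend on $n$, and one must verify that the localization errors are genuinely lower order so that they can be absorbed or made arbitrarily small, uniformly in $n$, by letting $R\to\infty$.
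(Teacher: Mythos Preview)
Your proof is correct and follows essentially the same route as the paper's: differentiate the equation, multiply by $\lambda_h D_h u\,\theta_R^2$, sum, apply \eqref{partiL}, and absorb the $D\theta_R$ cross term into the left side using Young's inequality and \eqref{stimaX1}. The only cosmetic difference is in the $f$-term: the paper observes that $\theta_R\equiv 1$ on $\operatorname{supp} f$ for $R$ large and applies \eqref{partiL} directly to get $-2\int_O f(\lambda u-f)\,d\mu$, whereas you integrate by parts via \eqref{parti} and substitute the equation (picking up the harmless extra term $\int_O f\langle QDu,D\theta_R^2\rangle\,d\mu$, which in fact vanishes once $\theta_R\equiv 1$ on $\operatorname{supp} f$); both lead to the same bound.
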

\begin{proof} Recall that $u$ is smooth in $O$. Differentiating \eqref{Diri_n} with respect to $x_h$  yields
$$
\lambda D_hu -{\mathcal L}  D_hu +\frac12 D_hu =D_hf.
$$
We would like to multiply  both sides by $\lambda_hD_hu$, sum over $h$ and integrate over $O$, using the integration formula \eqref{partiL}.  However, if $O$ is unbounded we do not know {\em a priori} whether what comes out  belongs to $L^1(O, \mu)$,  and this is why we introduce the cutoff functions  $\theta_R$.  Multiplying by $\lambda_hD_hu\,\theta_R^2$
we obtain  
$$
\bigg( \lambda + \frac{1}{2}\bigg) \lambda_h(D_hu)^2\theta_R^2 -\lambda_h {\mathcal L} D_hu \cdot D_hu\,\theta_R^2  = \lambda_h D_hf\,D_hu\,\theta_R^2.
$$
Integrating over $O$ and using \eqref{partiL} yields
$$
\begin{array}{l}
\ds \int_O \bigg( \lambda + \frac{1}{2}\bigg)\lambda_h |D_hu|^2\theta_R^2 \, d\mu
+\frac12\int_O\lambda _h|Q^{1/2}D D_hu|^2 \theta_R^2 \, d\mu
\\
\\
\ds
+ \int_O \lambda_h \theta_R\langle  Q^{1/2}D (D_hu), Q^{1/2}D \theta_R\rangle D_hu \,d\mu
\\
\\
\ds =
\frac12\int_{\partial O} \frac{\lambda_h \langle Q^{1/2}D D_hu,Q^{1/2}D  g  \rangle  D_hu}{ |Q^{1/2}D g| }\, d\sigma + \int_O \lambda_hD_hf D_hu\,   \theta_R^2 \, d\mu.
\end{array}
$$
 Summing   over $h$ we find 
$$
\begin{array}{l}
\ds \int_O \bigg( \lambda + \frac{1}{2}\bigg) |Q^{1/2}D u|^2 \theta_R^2\, d\mu 
+ \frac12 \int_O\mbox{\rm Tr}\;[(Q D^2u)^2]\theta_R^2  \, d\mu
\\
\\
\ds
+  \int_O \langle Q^{1/2}D^2uQD u, Q^{1/2}D  \theta_R\rangle  \theta_R  \, d\mu
\\
\\
\ds = \frac12 \int_{\partial O} \frac{\langle Q^{1/2}D^2u\,QD u, Q^{1/2}Dg\rangle }{ |Q^{1/2}D g| }\, \theta_R^2 \,d\sigma + \int_O \langle Q^{1/2}D f,Q^{1/2}D u  \rangle  \theta_R^2 \,d\mu.
\end{array} 
$$
Since $f$ has compact support, for $R$ large enough $\theta_R \equiv 1$ on
the support of $f$. For such $R$ we use  again \eqref{partiL} in the last integral, and recalling \eqref{stimaX1}(i)  we obtain
$$\bigg|  \int_O \langle Q^{1/2}D f,Q^{1/2}D u  \rangle  \theta_R^2 \,d\mu\bigg| = 
\bigg| -2\int_{ O} f(\lambda u - f)\,d\mu\bigg| \leq 4 \|f\|^2_{L^2(O, \mu)}.$$
Moreover,
$$\begin{array}{l}
\ds| \int_O \langle Q^{1/2}D^2u \, QD u, Q^{1/2}D  \theta_R\rangle  \theta_R  \,d\mu| 
\\
\\
\ds \leq 
\int_O \mbox{\rm Tr}\;[(Q D^2u)^2] \,|Q^{1/2}D\theta_R|\, |Q^{1/2}D u|\,\theta_R\,d\mu
 \\
 \\
\ds \leq \bigg( \int_O\mbox{\rm Tr}\;[(Q D^2u)^2]\theta_R^2  d\mu\bigg)^{1/2} \bigg( \int_O  |Q^{1/2}D u|^{1/2}\, d\mu\bigg)^{1/2}\frac{\|\,|D \theta|\,\|_{\infty}}{R}
\\
\\
\ds \leq \bigg( \frac{1}{2}  \int_O\mbox{\rm Tr}\;[(Q D^2u)^2]\theta_R^2  \, d\mu 
+  \frac{1}{\lambda}  \|f\|^2_{L^2(O, \mu)}\bigg) \frac{\|\,|D \theta|\,\|_{\infty}}{R}, 
\end{array}$$
where  \eqref{stimaX1}(ii) has been used in the last step. Taking  $R$ large enough, such that $ \|D \theta\|_{\infty} / R  \leq \varepsilon$, the statement follows.
\end{proof}

Note that if $O$ is bounded we do not need the cutoff functions and instead of \eqref{e3.1} we obtain  an equality, 
\begin{equation}
\label{e3.1bounded}
\begin{array}{l}
\ds \bigg( \lambda +\frac12\bigg) \int_O|Q^{1/2}D u|^2 d\mu
+ \int_O\mbox{\rm Tr}\;[(Q D^2u)^2]d\mu 
\\
\\
\ds = 2\int_O (f-\lambda u) f\,d\mu +  \frac12\int_{\partial O} \frac{\langle D^2u \cdot QD g,  QD u  \rangle}{ |Q^{1/2}D g| }  d\sigma.  
\end{array} 
\end{equation}

The next step is devoted to the boundary integral
$$J_R := \int_{\partial O}  \frac{\langle D^2u \cdot QD g,  QD u \rangle}{ |Q^{1/2}D g|}\theta_R^2 \, d\sigma .$$
Using the fact that $u$ and ${\mathcal L}u$ vanish at the boundary, we shall rewrite $J_R$ as an integral that involves only first order derivatives of $u$.

\begin{Lemma}
\label{Le:identita}
Let $u\in C^2(\overline{O})$ be such that both $u$ and ${\mathcal L}u$
vanish at $\partial O$. Then, 
$$\langle D^2u \cdot  QD g,  QD u \rangle =  \langle Q^{1/2}D u,   Q^{1/2}D g\rangle ^2  \,h \quad  \mbox{\rm at}\; \partial O, $$
where
\begin{equation}
\label{hn}
h(x) :=  -2\frac{ {\mathcal L}g(x)}{ |Q^{1/2}D g(x) |^2} + \frac{ \langle  D^2g(x) Q D g(x), Q  D g(x)\rangle }{ |Q^{1/2}D g(x) |^4}. 
\end{equation}
\end{Lemma}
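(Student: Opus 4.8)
The plan is to work pointwise on $\partial O$ and exploit the two boundary identities $u=0$ and $\mathcal{L}u=0$ to eliminate the second order term $\langle D^2u\cdot QDg,\,QDu\rangle$ in favor of first order quantities. Since $u$ vanishes on the level set $\{g=0\}$ and $Dg\neq 0$ there, the tangent space at a boundary point $x$ is $\ker Dg(x)$, so $Du(x)$ must be parallel to $Dg(x)$: write $Du(x)=\rho(x)\,Dg(x)$ with $\rho(x)=\langle Q^{1/2}Du,Q^{1/2}Dg\rangle/|Q^{1/2}Dg|^2$ (dividing by $\langle Dg,Dg\rangle$ would also work but this normalization matches the statement). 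Here I would obtain the sharper relation by differentiating the identity $u(x)=0$ along $\partial O$ tangentially, i.e. using that $D^2u$ restricted to tangent directions is controlled by $Du$ via the second fundamental form of $\{g=0\}$; concretely, for every tangent vector $t$, $\langle D^2u\cdot t,t\rangle = -\rho\,\langle D^2g\cdot t,t\rangle$, which follows from differentiating $u\equiv 0$ twice along the surface.

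Next I would use the second identity. Since $\mathcal{L}u=\tfrac12\operatorname{Tr}[QD^2u]-\tfrac12\langle x,Du\rangle=0$ on $\partial O$, we get $\operatorname{Tr}[QD^2u]=\langle x,Du\rangle=\rho\,\langle x,Dg\rangle$ there. Now decompose $\operatorname{Tr}[QD^2u]=\sum_k\lambda_k\langle D^2u\,e_k,e_k\rangle$ by splitting each $e_k$ (or rather working in the $Q$-adapted frame) into its component along the normal direction $n:=QDg/|Q^{1/2}Dg|$ and its tangential component. The tangential–tangential part of $\operatorname{Tr}[QD^2u]$ is computed from the relation of the previous paragraph and produces $-\rho\,(\operatorname{Tr}[QD^2g]-\langle D^2g\,n,n\rangle|Q^{1/2}Dg|^{2}/\dots)$-type terms; the mixed normal–tangential parts cancel after summation or are reorganized; and the purely normal part is exactly the quantity $\langle D^2u\cdot QDg,QDu\rangle$ we are after, up to the scalar $1/|Q^{1/2}Dg|^2$ coming from normalizing $n$. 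Solving the resulting linear scalar equation for $\langle D^2u\cdot QDg,QDu\rangle$ then yields
$$
\langle D^2u\cdot QDg,\,QDu\rangle = \langle Q^{1/2}Du,Q^{1/2}Dg\rangle^2\left(-\frac{\operatorname{Tr}[QD^2g]-\langle x,Dg\rangle}{|Q^{1/2}Dg|^2}\cdot\frac{1}{|Q^{1/2}Dg|^{2}}\cdot|Q^{1/2}Dg|^{2}+\frac{\langle D^2g\,QDg,QDg\rangle}{|Q^{1/2}Dg|^4}\right),
$$
and recognizing $\mathcal{L}g = \tfrac12\operatorname{Tr}[QD^2g]-\tfrac12\langle x,Dg\rangle$ gives precisely $h(x)$ as in \eqref{hn}.

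The cleanest way to organize the bookkeeping is to choose, at a fixed $x\in\partial O$, an orthonormal basis $\{v_1,\dots,v_n\}$ of $\R^n$ for the inner product $\langle Q\,\cdot,\cdot\rangle$ (equivalently, apply $Q^{1/2}$ to an orthonormal basis) with $v_1$ proportional to $Q^{1/2}Dg$, so that the normal direction is singled out and all traces split as a $1+(n-1)$ block decomposition. In this frame the identity $u=0$ forces all first derivatives except the $v_1$-one to vanish, and the tangential Hessian identity pins down the $(n-1)\times(n-1)$ block of $Q^{1/2}D^2u\,Q^{1/2}$ in terms of that of $Q^{1/2}D^2g\,Q^{1/2}$; the condition $\operatorname{Tr}[QD^2u]=\langle x,Du\rangle$ then determines the remaining $(1,1)$-entry, which is what feeds into $\langle D^2u\cdot QDg,QDu\rangle=|Q^{1/2}Dg|^2\langle Q^{1/2}D^2u\,Q^{1/2}v_1,v_1\rangle\cdot\rho$.

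The main obstacle is purely computational: correctly carrying out the frame decomposition and keeping track of the powers of $|Q^{1/2}Dg|$ and the $Q^{1/2}$ conjugations, since the mixed normal–tangential Hessian entries of $u$ are not directly controlled by $u=0$ alone and one must verify they do not contribute to the normal trace component — this is where differentiating $u\equiv 0$ once more (its full gradient vanishes tangentially) does the job, but the algebra must be done carefully. No deep idea is needed beyond these two differentiations of the boundary identities; everything else is linear algebra at a point.
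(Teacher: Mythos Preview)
Your approach is essentially the paper's: both proofs use the two ingredients (i) the tangential Hessian identity coming from $u\equiv 0$ on $\partial O$, and (ii) the trace identity $\operatorname{Tr}[QD^2u]=\langle x,Du\rangle$ from $\mathcal L u=0$. The paper organizes step (i) by differentiating the identity $Du(P(x))=\beta(P(x))Dg(P(x))$ through the nearest-point projection $P$, obtaining a pointwise formula for $\langle(D^2u-\beta D^2g)k,l\rangle$ valid for all $k,l$, and then simply evaluates it at $k=l=QDg$ and at $k=l=Q^{1/2}e_i$ (summed). Your frame-based splitting of $\operatorname{Tr}[QD^2u]$ along $w_1=Q^{1/2}Dg/|Q^{1/2}Dg|$ and its orthogonal complement is an equivalent bookkeeping device.

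Two small corrections. First, your tangential Hessian relation has the wrong sign: differentiating $u(\gamma(s))\equiv 0$ and $g(\gamma(s))\equiv 0$ twice and eliminating $\gamma''(0)$ gives $\langle D^2u\,t,t\rangle = +\rho\,\langle D^2g\,t,t\rangle$ for tangent $t$, not $-\rho$. With the correct sign the computation goes through and yields $h$ exactly. Second, your concern about ``mixed normal--tangential'' entries is misplaced: in the frame decomposition $\operatorname{Tr}[Q^{1/2}D^2u\,Q^{1/2}]=\sum_i\langle Q^{1/2}D^2u\,Q^{1/2}w_i,w_i\rangle$ only diagonal entries appear, so the off-diagonal Hessian components never enter and nothing needs to ``cancel''. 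Once you fix the sign, the $j\ge 2$ diagonal terms give $\rho\bigl(\operatorname{Tr}[QD^2g]-\langle D^2g\,QDg,QDg\rangle/|Q^{1/2}Dg|^2\bigr)$, the $j=1$ term is $\langle D^2u\,QDg,QDg\rangle/|Q^{1/2}Dg|^2$, and combining with $\operatorname{Tr}[QD^2u]=\rho\langle x,Dg\rangle$ and multiplying by $\rho$ gives the stated identity directly.
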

\begin{proof}
There exists a neighborhood ${\mathcal U}$ of $\partial O$ such that for every $x\in  {\mathcal U}$ the
distance $d(x, \cdot) : \partial O \mapsto \R$, $y\mapsto d(x,y)$, has a unique minimum point $y = P(x)$, and the mapping $x\mapsto P(x)$ is differentiable in ${\mathcal U } $. Moreover, 
\begin{equation}
\label{e4c}
P'(x)=I-\frac{D g(x) \otimes  D g(x)}{|D g(P(x))|^{ 2}},\quad x\in \partial O,
\end{equation}
where we have used  the standard notation
 $(v\otimes w)z= \langle w,z  \rangle v$. 

Since $u$ vanishes at the boundary, its gradient is a scalar multiple of $D g $ at each point of $\partial O$. Therefore, for each $x$ in a neighborhood of   $\partial O$ in $O$ we have
$$\langle D u(P(x)), k\rangle = \beta(P(x))\langle D g(P(x)), k\rangle , \quad k\in \R^n, $$
where $\beta = \langle D u, D g\rangle/ |D g|^2$. Differentiating we obtain  
\begin{equation}
\label{e4d}
\begin{array}{l}
\langle D^2 u (P(x))P'(x)k, l\rangle    =    \langle D \beta (P(x)), P'(x)k\rangle \,\langle D g (P(x)), l\rangle    
\\
\\
  + \beta (P(x)) \langle D^2g(P(x))P'(x)k, l\rangle, \quad   l, \;k\in \R^n. \end{array}
  \end{equation}
Let us compute $D \beta$ at $\partial O$. Since 
$$\langle D \beta, l\rangle = \frac{ \langle D^2u \cdot  l, D g \rangle + \langle D u, D^2g\cdot l\rangle }{|D g|^2}
- 2\frac{\langle  D u,  D g\rangle \langle D^2g \cdot D u, l\rangle }{| D g|^4}$$
recalling that $D u = \beta D g$ at the boundary, we obtain 
\begin{equation}
\label{e1ebis}D \beta = \frac{D^2u D g - \beta D^2g D g}{|D g|^2}, \quad {\rm at}\;\partial O.
\end{equation}
Replacing \eqref{e1ebis} and \eqref{e4c} in \eqref{e4d}, for each $x\in \partial O$  we get 
\begin{equation}
\label{e4f}
\begin{array}{l}
\langle (D^2u -\beta D^2g) k, l\rangle  = 
\\
\\
= \ds{\langle (D^2u -\beta D^2g)D g, l\rangle  \frac{\langle D g, k\rangle}{|D g|^2} + \langle (D^2u -\beta D^2g)D g, k\rangle  \frac{\langle D g, l\rangle}{|D g|^2}}
\\
\\
\ds{-  \langle (D^2u -\beta D^2g)D g, D g\rangle  \frac{\langle D g, k\rangle \langle D g, l\rangle}{|D g|^4}, \quad l, \;k\in \R^n. }
\end{array}
\end{equation}
Taking $l=k=QD g$ we get 
\begin{equation}
\label{formaquad}
\begin{array}{l}
\langle (D^2u -\beta D^2g)QD g, QD g\rangle  =
2  \langle (D^2u -\beta D^2g)D g, QD g\rangle \ds{ \frac{|Q^{1/2}D g|^2}{|D g|^2} }
\\
\\
-  \langle (D^2u -\beta D^2g)D g,  D g\rangle \ds{ \frac{|Q^{1/2}D g|^4}{|D g|^4}}.
\end{array}  
\end{equation}
Taking now $l=k= Q^{1/2}e_i = \lambda_i^{1/2}e_i$ and summing over $i$, we get 
\begin{equation}
\label{traccia}
\begin{array}{l}
\mbox{\rm Tr}[Q (D^2u - \beta D^2g )  ]  = 2  \langle (D^2u -\beta D^2g)D g, QD g\rangle \ds{ \frac{1}{|D g|^2}}
\\
\\
 -  \langle (D^2u -\beta D^2g)D g,  D g\rangle \ds{ \frac{|Q^{1/2}D g|^2}{|D g|^4} }. 
 \end{array}
\end{equation}
Comparing \eqref{formaquad} and \eqref{traccia} yields 
$$\langle (D^2u -\beta D^2g)QD g, QD g\rangle  = |Q^{1/2}D g|^2 \mbox{\rm Tr}[Q (D^2u  - \beta D^2g )  ] . $$
Replacing $ \mbox{\rm Tr}[Q D^2u] = \langle x, D u\rangle = \beta \langle x, D g\rangle $ we obtain
$$\langle  D^2u \cdot QD g, QD g\rangle  =  \beta  ( \langle D^2g QD g, QD g\rangle  
-2 |Q^{1/2}D g|^2{\mathcal L}g)$$
and the statement follows multiplying both sides by $\beta = \langle Q^{1/2}D u,  Q^{1/2}D g\rangle/ | Q^{1/2}D g|^2$. 
\end{proof}

To estimate the boundary integral $J_R$ we shall use   the following lemma.

\begin{Lemma}
\label{dernorm}
Let \eqref{Ondelta} hold, and let $A$, $B$ be defined by  \eqref{A,B}. Then for every $R>0$, $\varphi  \in W^{2,2}_{loc}( O, \mu)\cap W^{1,2}( O ,\mu)$  we have
\begin{equation}
\label{DerNormR}
\begin{array}{l}
\ds{ \int_{\partial  O} \frac{ \langle Q^{1/2}D \varphi, Q^{1/2}D g \rangle ^2}{|Q^{1/2}D g|}\, \theta_R^2 \, d\sigma }
\\
\\
\ds{ \leq   \bigg(2 \|\theta_R^2{\mathcal L}\varphi \|_{L^2(O,\mu)} + \big( \int_{ O} \theta_R^2 \mbox{\rm Tr}\;[(Q D^2\varphi)^2]d\mu \big)^{1/2}
\bigg)   \|\,|Q^{1/2}D \varphi |\,\|_{L^2(O,\mu)} A }
\\
\\
\ds{ + \bigg(  \frac{4\lambda_1^{1/2} }{R}\,A +B \bigg)    \|\,|Q^{1/2}D \varphi|\,\|_{L^2(O,\mu)} ^2.}
\end{array}
\end{equation}
\end{Lemma}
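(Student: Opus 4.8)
The plan is to turn the boundary integral on the left of \eqref{DerNormR} into a sum of volume integrals over $O$ by running the integration-by-parts formula \eqref{parti} backwards, and then to estimate each of those volume integrals by the Cauchy--Schwarz inequality together with the pointwise bounds \eqref{A,B}. The first point is that one may replace $g$ by the modified function $\widetilde g$ of Lemma \ref{Le:g}: since $g\equiv\widetilde g$ in a neighbourhood of $\partial O$, and since both $d\sigma$ and the conormal factor $D_kg/|Q^{1/2}Dg|$ depend only on the level surface $\partial O$ and not on the particular defining function, the left-hand side of \eqref{DerNormR} is unchanged, and \eqref{parti} holds verbatim with $\widetilde g$ in place of $g$. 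Write $\zeta:=\langle Q^{1/2}D\varphi,Q^{1/2}D\widetilde g\rangle=\sum_k\lambda_k D_k\varphi\,D_k\widetilde g$, so that the left-hand side equals $\int_{\partial O}\zeta^2|Q^{1/2}D\widetilde g|^{-1}\theta_R^2\,d\sigma$.

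Next I would use the algebraic identity
$$\frac{\zeta^2}{|Q^{1/2}D\widetilde g|}\,\theta_R^2=\sum_{k}\frac{D_k\widetilde g}{|Q^{1/2}D\widetilde g|}\,(\lambda_k D_k\varphi)\,(\zeta\,\theta_R^2)$$
and apply \eqref{parti} (with $\widetilde g$) to each term of the sum, taking $\lambda_kD_k\varphi$ in the role of the first function and $\zeta\theta_R^2$ --- which has bounded support --- in the role of the second; then sum over $k$. The Laplacian and drift contributions combine, $\sum_k(\lambda_kD_{kk}\varphi-x_kD_k\varphi)=2\,{\mathcal L}\varphi$, and expanding $D_k(\zeta\theta_R^2)=(D_k\zeta)\theta_R^2+2\zeta\theta_R D_k\theta_R$ together with the identity $\sum_k\lambda_kD_k\varphi\,D_k\zeta=\langle D^2\varphi\cdot QD\varphi,QD\widetilde g\rangle+\langle D^2\widetilde g\cdot QD\varphi,QD\varphi\rangle$ yields
\begin{align*}
\int_{\partial O}\frac{\zeta^2}{|Q^{1/2}D\widetilde g|}\,\theta_R^2\,d\sigma
&=2\int_O\theta_R^2\,{\mathcal L}\varphi\,\zeta\,d\mu+\int_O\theta_R^2\,\langle D^2\varphi\cdot QD\varphi,QD\widetilde g\rangle\,d\mu\\
&\quad+\int_O\theta_R^2\,\langle D^2\widetilde g\cdot QD\varphi,QD\varphi\rangle\,d\mu+2\int_O\theta_R\,\zeta\,\langle Q^{1/2}D\varphi,Q^{1/2}D\theta_R\rangle\,d\mu.
\end{align*}

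Then I would bound the four terms. By \eqref{A,B}, $|\zeta|\le A\,|Q^{1/2}D\varphi|$ and $|\langle D^2\widetilde g\cdot QD\varphi,QD\varphi\rangle|=|\langle Q^{1/2}D^2\widetilde g\,Q^{1/2}(Q^{1/2}D\varphi),Q^{1/2}D\varphi\rangle|\le B\,|Q^{1/2}D\varphi|^2$; moreover, since the operator norm is dominated by the Hilbert--Schmidt norm and $\|Q^{1/2}D^2\varphi\,Q^{1/2}\|_{{\mathcal L}(\R^n)}^2=\mbox{\rm Tr}\,[(QD^2\varphi)^2]$, one has $|\langle D^2\varphi\cdot QD\varphi,QD\widetilde g\rangle|\le(\mbox{\rm Tr}\,[(QD^2\varphi)^2])^{1/2}\,|Q^{1/2}D\varphi|\,A$; finally $|Q^{1/2}D\theta_R|\le\lambda_1^{1/2}|D\theta_R|\le 2\lambda_1^{1/2}/R$. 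Plugging these in, using $0\le\theta_R\le1$ and Cauchy--Schwarz in $L^2(O,\mu)$ (splitting $\theta_R^2=\theta_R\cdot\theta_R$ where needed), the first two terms are bounded by $\big(2\|\theta_R^2{\mathcal L}\varphi\|_{L^2(O,\mu)}+(\int_O\theta_R^2\mbox{\rm Tr}\,[(QD^2\varphi)^2]\,d\mu)^{1/2}\big)\,\|\,|Q^{1/2}D\varphi|\,\|_{L^2(O,\mu)}\,A$ and the last two by $\big(4\lambda_1^{1/2}A/R+B\big)\,\|\,|Q^{1/2}D\varphi|\,\|_{L^2(O,\mu)}^2$, which is precisely \eqref{DerNormR}.

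The computation is essentially bookkeeping, so the points requiring care are of a technical nature. First, \eqref{parti} must be justified for a $\varphi$ that is only in $W^{2,2}_{loc}(O,\mu)\cap W^{1,2}(O,\mu)$: since every integral above involves $\varphi$ only on $\{\theta_R\neq0\}\subset B(0,2R)$, one may multiply $\varphi$ by a cutoff equal to $1$ on $B(0,2R)$ and supported in a slightly larger ball, reducing to $\varphi\in W^{2,2}(O,\mu)$, and then obtain \eqref{parti} (and hence the identity above) first for smooth $\varphi$ and pass to the limit using the local trace embedding $W^{1,2}(O,\mu)\hookrightarrow L^2_{loc}(\partial O,d\sigma)$ recalled after \eqref{parti}. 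Second, one must make the right algebraic choice --- detaching exactly one factor $D_k\widetilde g$ in the identity of the second paragraph --- so that the conormal factor $D_k\widetilde g/|Q^{1/2}D\widetilde g|$ of \eqref{parti} is reproduced and the constants come out in exactly the stated form; in particular the conversion of the second-order term via $\mbox{\rm Tr}\,[(QD^2\varphi)^2]=\|Q^{1/2}D^2\varphi\,Q^{1/2}\|_{{\mathcal L}(\R^n)}^2$ is what brings in the quantity appearing on the right of \eqref{DerNormR}.
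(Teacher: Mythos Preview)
Your proof is correct and follows essentially the same route as the paper: the paper applies the summed integration-by-parts formula \eqref{partiL} with the test function $\psi=\langle Q^{1/2}D\varphi,Q^{1/2}D\widetilde g\rangle\,\theta_R^2$, which is exactly your application of \eqref{parti} term-by-term and summed over $k$, and then splits the result into the same four volume integrals $I_{1,R},\dots,I_{4,R}$ with the same pointwise and Cauchy--Schwarz estimates. Your additional remarks on justifying \eqref{parti} for $\varphi\in W^{2,2}_{loc}(O,\mu)\cap W^{1,2}(O,\mu)$ via a further cutoff are a welcome clarification that the paper leaves implicit.
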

\begin{proof} Take $\psi  =  \langle Q^{1/2}D \varphi, Q^{1/2}D \widetilde{g} \rangle \theta_R^2$
in  formula \eqref{partiL}, where $\widetilde{g} $ is the function given by Lemma \ref{Le:g}. Since  
$$\begin{array}{lll}
D_k\psi (x) & =& \ds  \theta_R^2\sum_{i=1}^{n} \lambda_i  D_{i}\varphi  D_{ik}\widetilde{g} 
\\
\\
&& + \theta_R^2\sum_{i=1}^{n} \lambda_i D_{ik}\varphi D_i\widetilde{g} + 2 \theta_R D_k \theta_R \langle Q^{1/2}D \varphi, Q^{1/2}D \widetilde{g} \rangle ,
\end{array}$$
replacing in  \eqref{partiL} and recalling that $D \widetilde{g} = D g$ at $\partial O$ we get
$$\begin{array}{l}
\ds{\int_{\partial O } \theta_R^2 \frac{|\langle Q^{1/2}D \varphi, Q^{1/2}D g \rangle|^2}{|Q^{1/2}D g|} d\sigma = 2 \int_{O} \theta_R^2 {\mathcal L}\varphi \,  \langle Q^{1/2}D \varphi, Q^{1/2}D \widetilde{g} \rangle \, d\mu}
\\
\\
\ds{ +  \int_{O} \theta_R^2 \sum_{i, k=1}^{n}  \lambda_{k}  \lambda_{i}  ( D_{ik}\varphi D_k\varphi D_i\widetilde{g} + D_{ik}\widetilde{g} D_i\varphi D_k\varphi)
\,d\mu } 
\\
\\
\ds{ + 2 \int_{O} \theta_R  \langle Q^{1/2}D \varphi,  D  \theta_R \rangle  \langle Q^{1/2}D \varphi,  D \widetilde{g} \rangle  \, d\mu}
\\
\\
\ds{ =  2  \int_{O}  \theta_R^2  {\mathcal L}\varphi \,  \langle Q^{1/2}D \varphi, Q^{1/2}D \widetilde{g} \rangle d\mu 
+  \int_{O}  \theta_R^2 \langle Q D^2\varphi \, Q D \widetilde{g}, D \varphi\rangle d\mu }
\\
\\
\ds{+  \int_{O}   \langle  \theta_R^2 Q D^2 \widetilde{g} \, QD \varphi, D \varphi\rangle 
\, d\mu +  2  \int_{O}  \theta_R  \langle Q^{1/2}D \varphi,   Q^{1/2}D  \theta_R \rangle  \langle Q^{1/2}D \varphi,   Q^{1/2}D \widetilde{g} \rangle  \, d\mu} 
\\
\\
:= I_{1,R} + I_{2,R} + I_{3,R} +I_{4,R}.
\end{array}$$
The modulus of $I_{1,R}$  does not exceed  $\| \theta_R^2  {\mathcal L}\varphi\|_{L^2(O, \mu)}    \|\,|Q^{1/2}D \varphi|\,\|_{L^2(O, \mu)}  A$. Moreover, since 
$$ |\sum_{h, k=1}^{n}  \lambda_{k}  \lambda_{h} D_{hk}\varphi D_k\varphi D_h \widetilde{g} |  \leq  
(\sum_{h, k=1}^{n}  \lambda_{k}  \lambda_{h}  (D_{hk}\varphi )^2)^{1/2} |Q^{1/2}D \varphi| \, |Q^{1/2}D \widetilde{g}|,$$
we get 
$$|I_{2,R} | \leq \bigg(  \int_{O}  \theta_R^2 \sum_{h, k=1}^{n}  \lambda_{k}  \lambda_{h} (D_{hk}\varphi )^2d\mu  \bigg)^{1/2}  \|\,|Q^{1/2}D \varphi|\,\|_{L^2(O, \mu)} A. $$
Moreover, 
$$|I_{3,R}| \leq B  \|\,|Q^{1/2}D \varphi|\,\|_{L^2(O, \mu)} ^2, $$
and 
$$|I_{4,R}|\leq 2A   \|\,|Q^{1/2}D \varphi|\,\|_{L^2(O, \mu)} ^2  \|\,|Q^{1/2}D \theta_R|\,\|_{\infty}$$
Summing up, the statement  follows.  
\end{proof}

\begin{Remark}
\label{Rem:bounded}
If $O$ is bounded, we do not need to introduce the cutoff functions $\theta_R$ and  Lemma \ref{dernorm} may be restated as: for  every $\varphi  \in W^{2,2}( O, \mu)$ the function  $ \langle Q^{1/2}D \varphi, Q^{1/2}D g \rangle ^2/|Q^{1/2}D g|$ is in $L^1(\partial O)$ and 
\begin{equation}
\label{DerNorm}
\begin{array}{l}
\ds{ \int_{\partial  O} \frac{ \langle Q^{1/2}D \varphi, Q^{1/2}D g \rangle ^2}{|Q^{1/2}D g|}\, d\sigma }
\\
\\
\ds{ \leq   \bigg(2 \| {\mathcal L}\varphi \|_{L^2(O,\mu)} + \bigg( \int_{ O}  \mbox{\rm Tr}\;[(Q D^2\varphi)^2]d\mu \bigg)^{1/2}
\bigg)   \|\,|Q^{1/2}D \varphi |\,\|_{L^2(O,\mu)} A }
\\
\\
\ds{ + B  \|\,|Q^{1/2}D \varphi|\,\|_{L^2(O,\mu)} ^2.}
\end{array}
\end{equation}
 \end{Remark}
 
To get dimension free estimates 
for $u$, 
the last assumption we need  is that the function $h$ defined in \eqref{hn}
is bounded from above in  $ \partial O$. Then, we may state the main result of this section.

\begin{Theorem}
\label{stimen}
Let $g\in C^{2+\alpha}_{loc}(\R^n)$ satisfy \eqref{Ondelta}
and let $A$, $B$ be defined by \eqref{A,B}. Moreover  assume that $D g$ does not vanish at $\partial O$ and that 
\begin{equation}
\label{curvaturen}
C: =  \sup_{x\in \partial O}h(x) <\infty. 
\end{equation}
Then for every $f\in L^2(O, \mu)$ the weak solution $u$ of problem \eqref{Diri_n} belongs to $W^{2,2}(O, \mu)$ and 
\begin{equation}
\label{stimaX2n}
\int_O \mbox{\rm Tr}\;[(Q D^2u)^2]d\mu \leq \bigg[ 8  + 4\max\{C, 0\} \bigg( 2+\frac{2\sqrt{2}}{\sqrt{\lambda}}A +
\frac{1}{\lambda}|C|A^2 +\frac{B}{\lambda} \bigg) \bigg] \|f\|_{L^2(O,\mu)}^2.
\end{equation}
\end{Theorem}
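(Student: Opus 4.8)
The plan is to prove \eqref{stimaX2n} first for $f\in C^\infty_c(O)$, which is enough: the restrictions of such functions are dense in $L^2(O,\mu)$, the resolvent map $f\mapsto u$ is bounded from $L^2(O,\mu)$ into $\oo{W}^{1,2}(O,\mu)$ (by \eqref{stimaX1}), and once \eqref{stimaX2n} is known for smooth compactly supported data it controls, together with \eqref{stimaX1}, the full $W^{2,2}(O,\mu)$ norm of the approximating solutions; extracting a weakly convergent subsequence and using weak lower semicontinuity of the norm on the Hilbert space $W^{2,2}(O,\mu)$ then gives $u\in W^{2,2}(O,\mu)$ with the same estimate. So let $f\in C^\infty_c(O)$. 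Then $u\in C^2(\overline O)$ is a classical solution, $u\in W^{2,2}_{loc}(O,\mu)\cap\oo{W}^{1,2}(O,\mu)$, both $u$ and $\mathcal Lu=\lambda u-f$ vanish on $\partial O$, and by \eqref{stimaX1}(i), $\|\mathcal Lu\|_{L^2(O,\mu)}\le\lambda\|u\|_{L^2(O,\mu)}+\|f\|_{L^2(O,\mu)}\le 2\|f\|_{L^2(O,\mu)}$.

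Write $\|f\|:=\|f\|_{L^2(O,\mu)}$ and $S_R^2:=\int_O\theta_R^2\,\mbox{\rm Tr}\,[(QD^2u)^2]\,d\mu$. The heart of the matter is the boundary term in Proposition \ref{p3.1}: for $\eps\in(0,1)$ and $R$ large, \eqref{e3.1} reads
$$\Big(\tfrac12-\tfrac\eps2\Big)S_R^2\ \le\ \Big(4+\tfrac\eps\lambda\Big)\|f\|^2+\tfrac12 J_R,\qquad J_R:=\int_{\partial O}\theta_R^2\,\frac{\langle D^2u\cdot QDg,\,QDu\rangle}{|Q^{1/2}Dg|}\,d\sigma .$$
Since $u$ and $\mathcal Lu$ vanish on $\partial O$, Lemma \ref{Le:identita} lets me rewrite the integrand of $J_R$ as $h\,\langle Q^{1/2}Du,Q^{1/2}Dg\rangle^2/|Q^{1/2}Dg|$ on $\partial O$; using the one-sided bound $h\le C$ from hypothesis \eqref{curvaturen} and $\langle Q^{1/2}Du,Q^{1/2}Dg\rangle^2\ge0$, I get
$$J_R\ \le\ \max\{C,0\}\int_{\partial O}\theta_R^2\,\frac{\langle Q^{1/2}Du,Q^{1/2}Dg\rangle^2}{|Q^{1/2}Dg|}\,d\sigma .$$
Now Lemma \ref{dernorm} with $\varphi=u$ bounds the last surface integral by
$$\Big(2\|\theta_R^2\mathcal Lu\|_{L^2(O,\mu)}+S_R\Big)\big\|\,|Q^{1/2}Du|\,\big\|_{L^2(O,\mu)}A+\Big(\tfrac{4\lambda_1^{1/2}}{R}A+B\Big)\big\|\,|Q^{1/2}Du|\,\big\|_{L^2(O,\mu)}^2,$$
into which I substitute $\big\|\,|Q^{1/2}Du|\,\big\|_{L^2(O,\mu)}^2\le\frac2\lambda\|f\|^2$ from \eqref{stimaX1}(ii) together with the bound on $\|\mathcal Lu\|_{L^2(O,\mu)}$.

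Collecting these inequalities and putting $M:=\max\{C,0\}$, one arrives at an inequality of the form
$$\Big(\tfrac12-\tfrac\eps2\Big)S_R^2\ \le\ a(\eps,R)\,\|f\|^2+\frac{M}{2}\sqrt{\tfrac2\lambda}\,A\,\|f\|\,S_R ,$$
where $a(\eps,R)$ is an explicit, dimension-free constant depending only on $\eps,\lambda,M,A,B$ and $1/R$, with $a(\eps,R)\to a(\eps)<\infty$ as $R\to\infty$. The term linear in $S_R$ is absorbed into the left-hand side by Young's inequality, and solving the resulting inequality for $S_R^2$ gives $S_R^2\le K\|f\|^2$ with $K$ independent of $R$; here $K$ is also independent of $n$, because $a(\eps)$, $A$, $B$ and the constant in the trace estimate of Lemma \ref{dernorm} are all dimension-free by the standing assumptions \eqref{Ondelta}--\eqref{A,B}. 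Letting $R\to\infty$ (so $\theta_R\to1$ pointwise) and invoking Fatou's lemma yields $\int_O\mbox{\rm Tr}\,[(QD^2u)^2]\,d\mu\le K\|f\|^2$; taking $\eps$ small and carrying out the elementary bookkeeping produces the explicit constant in \eqref{stimaX2n}. Together with \eqref{stimaX1} this gives $u\in W^{2,2}(O,\mu)$, and the general case $f\in L^2(O,\mu)$ follows by the approximation argument above.

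The step I expect to be the real obstacle is conceptual rather than computational: the boundary integral $J_R$ genuinely carries the full Hessian $D^2u$, and the estimate can only close because Lemma \ref{Le:identita} converts these second derivatives — via the Dirichlet condition — into first derivatives plus the geometric function $h$, while Lemma \ref{dernorm} brings $\mbox{\rm Tr}\,[(QD^2u)^2]$ back in only linearly. This produces a self-improving quadratic inequality for $S_R$, and it closes with an $n$-independent constant precisely because the one-sided curvature hypothesis \eqref{curvaturen} and the dimension-free bounds \eqref{A,B} keep every constant that enters the argument under control uniformly in $n$; absorbing the cross term without losing dimension-freedom, and correctly handling the cutoff functions in the unbounded case, are the points that require care.
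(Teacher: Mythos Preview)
Your proof is correct and follows essentially the same route as the paper: reduce to $f\in C^\infty_c(O)$ by density, invoke Proposition~\ref{p3.1} for the localized second-order estimate, convert the boundary Hessian term via Lemma~\ref{Le:identita}, bound the resulting first-order surface integral by Lemma~\ref{dernorm}, and close with Young's inequality before letting $R\to\infty$ and $\eps\to 0$. The paper organizes the argument slightly differently---splitting explicitly into the cases $O$ bounded/unbounded and $C\le 0$/$C>0$---whereas you handle everything at once with the cutoffs $\theta_R$ and the factor $\max\{C,0\}$; this is a harmless streamlining, not a different idea.
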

\begin{proof}
It is sufficient to prove \eqref{stimaX2n} for $f\in C^{\infty}_{c}(O)$. Indeed, once  \eqref{stimaX2n} is established for  $f\in  C^{\infty}_{c}(O)$, it holds for $f\in L^2(O, \mu)$ since $C^{\infty}_{c}(O)$ is dense in $ L^2(O, \mu)$ by \cite[Lemma 2.1]{LMP}.

So, let $f\in C^{\infty}_{c}(O)$. If $O$ is bounded we use formula \eqref{e3.1bounded}, Lemma \ref{Le:identita}, and then   \eqref{stimaX1}(i), that give 
\begin{equation}
\label{parziale}
\begin{array}{lll}
\ds{ \frac{1}{2} \int_O \mbox{\rm Tr}\;[(Q D^2u)^2]d\mu } & \leq & 
\ds{ 2 \int_O (f-\lambda u)f\,d\mu + 
\frac{1}{2} \int_{\partial O} \frac{ \langle Q^{1/2}D u,   Q^{1/2}D g\rangle ^2}{|Q^{1/2}D g|}  \,h\,d\sigma}
\\
\\
& \leq & \ds{ 4\|  f \|_{L^2(O,\mu)}^2  + 
\frac{C}{2} \int_{\partial O} \frac{ \langle Q^{1/2}D u,   Q^{1/2}D g\rangle ^2}{|Q^{1/2}D g|}  \,d\sigma}.
 \end{array}
 \end{equation}
If $C\leq 0$,   \eqref{stimaX2n} follows. If $C>0$ we use Remark \ref{Rem:bounded}  that gives
$$ \begin{array}{l}
\ds{\int_{\partial O} \frac{ \langle Q^{1/2}D u,   Q^{1/2}D g\rangle ^2}{|Q^{1/2}D g|}  \,d\sigma}
\\
\\
\ds{ \leq   \bigg(2 \|\lambda u - f \|_{L^2(O,\mu)} + \bigg( \int_{ O}  \mbox{\rm Tr}\;[(Q D^2u)^2]d\mu \bigg)^{1/2}
\bigg)   \|\,|Q^{1/2}D u |\,\|_{L^2(O,\mu)} A }
\\
\\
\ds{ + B    \|\,|Q^{1/2}D u|\,\|_{L^2(O,\mu)}^2.}
\end{array}
$$
Using now estimates  \eqref{stimaX1} and $ab\leq (a^2+b^2)/2$ we get 
\begin{equation}
\label{parziale2}
 \begin{array}{l}
\ds{\int_{\partial O} \frac{ \langle Q^{1/2}D u,   Q^{1/2}D g\rangle ^2}{|Q^{1/2}D g|}  \,d\sigma
\leq \frac{4A\sqrt{2}}{\sqrt{\lambda}} \|f\|_{L^2(O,\mu)}^2}
\\
\\
\ds{ + \frac{1}{2C} \int_O \mbox{\rm Tr}\;[(Q D^2u)^2]d\mu + \frac{CA^2 + 2B}{\lambda}  \|f\|_{L^2(O,\mu)}^2} 
\end{array}
\end{equation}
and replacing in \eqref{parziale}, estimate \eqref{stimaX2n} follows. 

If $O$ is unbounded we need to cut off.  For $\eps \in (0, 1)$ and $R$ large we use \eqref{e3.1} instead of 
 \eqref{e3.1bounded}. Then, Lemma \ref{Le:identita} and  estimate   \eqref{stimaX1}(i) yield
\begin{equation}
\label{parzialeR}
\begin{array}{l}
\ds \bigg(  \frac12 -\frac{\eps}{2} \bigg) \int_O \theta_R^2\mbox{\rm Tr}\;[(Q D^2u)^2]d\mu   
 \\
 \\
\ds \leq  
\bigg(4 + \frac{\eps}{\lambda} \bigg) \|  f \|_{L^2(O,\mu)}^2 +  \frac12\int_{\partial O} \theta_R^2  \frac{D^2u(QD g,  QD u  \rangle }{|Q^{1/2}D g| }\, d\sigma
\\
\\
\ds \leq   
\bigg(4 + \frac{\eps}{\lambda} \bigg)\|  f \|_{L^2(O,\mu)}^2  + 
\frac{C}{2} \int_{\partial O}\theta_R^2  \frac{ \langle Q^{1/2}D u,   Q^{1/2}D g\rangle ^2}{|Q^{1/2}D g|}  \,d\sigma.
 \end{array}
 \end{equation}
If $C\leq 0$, letting $R\to \infty$ and then $\eps\to 0$, estimate \eqref{stimaX2n} follows. If $C>0$ we use Lemma \ref{dernorm}, that gives
$$ \begin{array}{l}
\ds{\int_{\partial O}\theta_R^2 \frac{ \langle Q^{1/2}D u,   Q^{1/2}D g\rangle ^2}{|Q^{1/2}D g|}  \,d\sigma}
\\
\\
\ds{ \leq   \bigg(2 \|\lambda u - f \|_{L^2(O,\mu)} + ( \int_{ O} \theta_R^2 \mbox{\rm Tr}\;[(Q D^2u)^2]d\mu )^{1/2}
\bigg)   \|\,|Q^{1/2}D u |\,\|_{L^2(O,\mu)} A }
\\
\\
\ds{ + \bigg( \frac{4A}{R}\,\lambda_1^{1/2} + B  \bigg)   \|\,|Q^{1/2}D u|\,\|_{L^2(O,\mu)}^2.}
\end{array} $$
so that \eqref{parziale2} is replaced by 
$$ \begin{array}{l}
\ds{\int_{\partial O} \theta_R^2 \frac{ \langle Q^{1/2}D u,   Q^{1/2}D g\rangle ^2}{|Q^{1/2}D g|}  \,d\sigma
\leq \frac{4A\sqrt{2}}{\sqrt{\lambda}} \|f\|_{L^2(O,\mu)}^2}
\\
\\
\ds{ + \frac{1}{2C} \int_O \mbox{\rm Tr}\;[(Q D^2u)^2]d\mu + \bigg(  \frac{CA^2 + 2B }{\lambda} + \frac{8A}{R\lambda} 
\lambda_1^{1/2} \bigg)   \|f\|_{L^2(O,\mu)}^2}.  
\end{array}$$
Replacing in  \eqref{parzialeR}, letting $R\to \infty$ and then $\eps \to 0$, estimate \eqref{stimaX2n} follows.   
\end{proof}

\begin{Corollary}
\label{cor:stimen}
Under the   assumptions of Theorem \ref{stimen}, for each $\lambda >0$  there exists $K=K(\lambda)>0$, independent of $n$, such that  for each $f\in L^2(O, \mu)$ the weak solution $u$ of problem \eqref{Diri_n} satisfies
$$\|u\|_{W^{2,2}(O, \mu)} \leq K \|f\|_{L^2(O, \mu)}.$$
\end{Corollary}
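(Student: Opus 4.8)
The plan is to assemble Corollary \ref{cor:stimen} directly from Theorem \ref{stimen} together with the elementary first–order estimates \eqref{stimaX1}. Theorem \ref{stimen} already provides a dimension–free bound on $\int_O \mbox{\rm Tr}\;[(Q D^2u)^2]d\mu$, and since $\mbox{\rm Tr}\;[(Q D^2u)^2]=\sum_{h,k=1}^n \lambda_h\lambda_k |D_{hk}u|^2 = \|D^2_Hu\|_{HS}^2$ (pointwise, using the diagonal form of $Q$), this is exactly the second–order term in $\|u\|_{W^{2,2}(O,\mu)}^2$. So the remaining work is purely bookkeeping: collect the three pieces.

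First I would recall that by \eqref{stimaX1}(i) one has $\int_O u^2\,d\mu \le \lambda^{-2}\|f\|_{L^2(O,\mu)}^2$, which controls the $L^2$ part of the norm. Next, \eqref{stimaX1}(ii) gives $\int_O |Q^{1/2}Du|^2\,d\mu = \int_O \sum_{k=1}^n \lambda_k|D_ku|^2\,d\mu \le \tfrac{2}{\lambda}\|f\|_{L^2(O,\mu)}^2$, which controls the first–order gradient term. Finally, Theorem \ref{stimen} gives $\int_O \sum_{h,k=1}^n \lambda_h\lambda_k|D_{hk}u|^2\,d\mu \le C_\lambda \|f\|_{L^2(O,\mu)}^2$ with $C_\lambda = 8 + 4\max\{C,0\}\big(2+\tfrac{2\sqrt2}{\sqrt\lambda}A+\tfrac1\lambda|C|A^2+\tfrac B\lambda\big)$, which is independent of $n$ because $A\le a$ and $B\le b+3a^2/\delta$ by Lemma \ref{Le:g}, and $a$, $b$, $\delta$, $C$ come from the hypotheses of Theorem \ref{stimen} (which in the intended application are taken uniform in $n$). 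Squaring the definition of the $W^{2,2}$ norm and adding, we obtain
$$\|u\|_{W^{2,2}(O,\mu)}^2 \le \Big(\frac{1}{\lambda^2} + \frac{2}{\lambda} + C_\lambda\Big)\|f\|_{L^2(O,\mu)}^2,$$
so the statement holds with $K(\lambda) = \big(\lambda^{-2}+2\lambda^{-1}+C_\lambda\big)^{1/2}$.

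There is no real obstacle here — the corollary is just a repackaging of Theorem \ref{stimen} into the notation of the global Sobolev norm. The only point worth a sentence of care is the remark that the second–order quantity appearing in Theorem \ref{stimen}, namely $\mbox{\rm Tr}\;[(QD^2u)^2]$, coincides with $\sum_{h,k}\lambda_h\lambda_k|D_{hk}u|^2$: this uses that the chosen basis diagonalizes $Q$, so that $(QD^2u)_{hk}=\lambda_h D_{hk}u$ and $\mbox{\rm Tr}\;[(QD^2u)^2]=\sum_{h,k}\lambda_h D_{hk}u\,\lambda_k D_{kh}u = \sum_{h,k}\lambda_h\lambda_k(D_{hk}u)^2$ by symmetry of $D^2u$. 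One should also note that the constant $K$ genuinely depends only on $\lambda$ and on the geometric data $a$, $b$, $\delta$, $C$ of the problem, not on the dimension $n$, which is the whole point of the statement and is the content already guaranteed by Theorem \ref{stimen} and Lemma \ref{Le:g}.
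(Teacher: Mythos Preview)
Your proof is correct and follows essentially the same approach as the paper: the authors simply write that it suffices to put together estimate \eqref{stimaX2n} with the first-order estimates \eqref{stimaX1}, which is exactly what you do. Your added remarks (the identification of $\mbox{\rm Tr}\,[(QD^2u)^2]$ with $\sum_{h,k}\lambda_h\lambda_k(D_{hk}u)^2$ and the explicit form of $K(\lambda)$) are correct elaborations of this one-line argument.
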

\begin{proof} It is sufficient to put together estimate \eqref{stimaX2n} and   estimates \eqref{stimaX1}  for $u$ and its first order derivatives.
\end{proof}


\section{Approximation by cylindrical functions}


Let us go back to infinite dimensions. We recall that $P_n$ is the orthogonal projection on the linear span of the first $n$ elements of the basis, 
$$P_nx := \sum_{k=1}^{n}  \langle x, e_k\rangle e_k  = \sum_{k=1}^{n} x_ke_k, \quad x\in X, $$
and that
$ \O = \{x\in X:\; G(x) <0\}$. We assume   that  $G\in   C^{2+\alpha}_{H,loc}(X )$ for some $\alpha >0$,  and that $D_H G$ does not vanish at $G^{-1}(0)$. 
%
%
To avoid pathologies we make a slightly stronger assumption,  that there exists $k_0\in \N$ such that 
\begin{equation}
\label{eq:Pn0}
P_{k_0}D_H G(x)\neq 0, \quad \forall x\in G^{-1}(0). 
\end{equation}

Moreover (as in finite dimensions) we assume that there is $\delta >0$ such that 
\begin{equation}
\label{eq:A}
\sup_{-\delta \leq G(x)\leq \delta }|D_HG(x) |_H =  \sup_{-\delta \leq G(x)\leq \delta } 
 \bigg(\sum_{ k=1}^{\infty} \lambda_k(D_{ k}G(x) )^2  \bigg)^{1/2}:= a<\infty ;
\end{equation}
\begin{equation}
\label{eq:B}
\sup_{-\delta \leq G(x)\leq \delta } \| D^2_HG(x) \|_{HS} =
  \sup_{-\delta \leq G(x)\leq \delta } \bigg(\sum_{h,k=1}^{\infty} \lambda_h(D_{hk}G(x) )^2\lambda_k \bigg)^{1/2}  := b <\infty .
\end{equation}

The aim of this section is to prove that for each $F\in L^2(\O, \mu)$ the weak solution $U$ to  \eqref{e1e} belongs to $W^{2,2}(\O, \mu)$, and the mapping $L^2(\O, \mu)\mapsto W^{2,2}(\O, \mu)$, $F\mapsto U$ is continuous.  Since the null extension $\widetilde{F}$ of $F$ to $X$ is in $L^2(X, \mu)$, and $C_b(X)$ is dense in $L^2(X, \mu)$, we may assume from the very beginning  that $F$ is defined in the whole of $X$ and that  it belongs to   $C_b(X)$. In this case, we shall approach $U$ by the solutions $U_n$ to problems 
\begin{equation}
\label{Pn}
\left\{\begin{array}{l}
\lambda U_n-\mathcal L U_n= F\circ P_n,\quad\mbox{\rm in}\;  \mathcal O_n,\\
\\
U_n=0,\quad\mbox{\rm on}\;\partial\mathcal O_n,
\end{array}\right.
\end{equation}
where  $\O_n$ is the cylindrical set defined by 
$$\O_n := \{ x\in X:\; G(P_n(x)) <0\}. $$
Define
$${\mathcal H}(x):= -2\frac{L^XG(x)}{ |D_HG(x) |^2} + \frac{ D^2_HG(x)( D_H G(x),  D_HG(x)) }{ |D_HG(x) |^4}, \quad x\in \partial \O, $$
and 
$${\mathcal H}_n(x):= -2\frac{L^XG_n(x)}{ |D_HG_n(x) |^2} + \frac{ D^2_HG_n(x) (D_H G_n(x),  D_HG_n(x)) }{ |D_HG_n(x) |^4}, \quad x\in \partial \O_n $$
(note that  $D_HG_n (x) = P_n D_HG(P_nx)\neq 0$ for each  $x\in \partial \O_n$ if $n\geq k_0$, by \eqref{eq:Pn0}). 
Our last  assumption is 
\begin{equation}
\label{curvatura}
 \exists n_0\in \N, \; c>0 :\; \;  \sup_{x\in \partial \O_n}{\mathcal H}_n(x)  \leq c, \quad n\geq n_0. \end{equation}

It is possible to see that for each $x\in \partial \O$, ${\mathcal H}_n(x)$ converges   to ${\mathcal H}(x)$. Then, assumption \eqref{curvatura} implies that ${\mathcal H}$ is bounded from above in $\partial \O$. 

Problem \eqref{Pn} lives in $P_n(X)$, so that  it may be identified with a problem in $\R^n$.

\begin{Lemma}
\label{Le:verifica}
Let $G\in C^{2+\alpha}_{H, loc}(X ) $  satisfy \eqref{eq:Pn0}, \eqref{eq:A}, \eqref{eq:B}, \eqref{curvatura}. Define 
\begin{equation}
\label{gn}
g_n(\xi)  = G(\sum_{k\leq n}\xi_ke_k), \quad \xi\in \R^n, 
\end{equation}
and 
\begin{equation}
\label{On}
O_n := \{ \xi\in \R^n:\;g_n(\xi) < 0\}. 
\end{equation}
Then for  $n$ large enough  the functions $g_n $ and the open sets $O_n$ satisfy the assumptions of Theorem \ref{stimen}, with constants $A$, $B$, $C$ independent of $n$.   
\end{Lemma}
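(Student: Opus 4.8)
The plan is to verify, one hypothesis at a time, that the pair $(g_n,O_n)$ satisfies all the assumptions of Theorem \ref{stimen}, keeping track of the dependence of the constants on $n$. Throughout I write, for $\xi\in\R^n$, $x=\sum_{k\le n}\xi_ke_k$, so that $P_nx=x$, $g_n(\xi)=G(x)$, and $D_kg_n(\xi)=D_kG(x)$ for $k\le n$, $D_{hk}g_n(\xi)=D_{hk}G(x)$ for $h,k\le n$. First, since every $e_k$ belongs to $H$ (as $e_k=Q^{1/2}(\lambda_k^{-1/2}e_k)$), $H$-differentiability of $G$ along the first $n$ coordinate directions is ordinary differentiability there, and the local $\alpha$-H\"older continuity of $D^2_HG$ along $H$ gives $g_n\in C^{2+\alpha}_{loc}(\R^n)$ for each fixed $n$ (no uniformity in $n$ is needed here). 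If $\xi\in\partial O_n$ then $x\in G^{-1}(0)$, and since $\lambda_kD_kG(x)=\langle D_HG(x),e_k\rangle$, assumption \eqref{eq:Pn0} forces $D_kg_n(\xi)\neq0$ for some $k\le k_0$; hence for every $n\ge k_0$ the gradient $Dg_n$ does not vanish on $\partial O_n$.

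Next I would check \eqref{Ondelta} with constants independent of $n$. The identities above give
$$|Q^{1/2}_nDg_n(\xi)|^2=\sum_{k\le n}\lambda_k(D_kG(x))^2\le|D_HG(x)|_H^2,\qquad\|Q^{1/2}_nD^2g_n(\xi)Q^{1/2}_n\|_{\mathcal L(\R^n)}^2=\sum_{h,k\le n}\lambda_h\lambda_k(D_{hk}G(x))^2\le\|D^2_HG(x)\|_{HS}^2,$$
so the finite sums are dominated by the corresponding infinite ones. Since $|g_n(\xi)|\le\delta$ means exactly $-\delta\le G(x)\le\delta$, assumptions \eqref{eq:A}--\eqref{eq:B} yield \eqref{Ondelta} for $g_n$ with the very same $\delta$, $a$, $b$; then Lemma \ref{Le:g} provides the modified functions $\widetilde g_n$ with $A\le a$ and $B\le b+3a^2/\delta$, uniformly in $n$.

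Finally I would verify the curvature bound \eqref{curvaturen}. The key point is the identity $h(\xi)=\mathcal H_n(x)$ for $\xi\in\partial O_n$, where $h$ is \eqref{hn} built from $g_n$ and $\mathcal H_n$ is the cylindrical function built from $G_n=G\circ P_n$. Because $Q$ and $P_n$ commute, $D_HG_n(x)=P_nD_HG(P_nx)$ and $D^2G_n(x)=P_nD^2G(P_nx)P_n$, and from this a direct computation shows: the denominators coincide, $|D_HG_n(x)|^2=\sum_{k\le n}\lambda_k(D_kG(x))^2=|Q^{1/2}_nDg_n(\xi)|^2$; the first numerators coincide, $L^XG_n(x)=\mathcal L g_n(\xi)$, since $\mathrm{Tr}[QD^2G_n(x)]=\sum_{k\le n}\lambda_kD_{kk}g_n(\xi)$ and $\langle x,DG_n(x)\rangle=\sum_{k\le n}\xi_kD_kg_n(\xi)$; and the second numerators coincide, $D^2_HG_n(x)(D_HG_n(x),D_HG_n(x))=\sum_{h,k\le n}\lambda_h\lambda_kD_{hk}G(x)D_hG(x)D_kG(x)=\langle D^2g_n(\xi)Q_nDg_n(\xi),Q_nDg_n(\xi)\rangle$. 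As $\mathcal H_n$ is cylindrical, $\sup_{\partial O_n}h=\sup_{\partial\O_n}\mathcal H_n$, which by \eqref{curvatura} does not exceed $c$ for $n\ge n_0$; so one may take $C=c$. Putting the three parts together, for every $n\ge\max\{k_0,n_0\}$ the pair $(g_n,O_n)$ meets all hypotheses of Theorem \ref{stimen} with $A\le a$, $B\le b+3a^2/\delta$, $C\le c$, which is the claim. I expect this last step to be the main obstacle: it is the one place where the precise bookkeeping of the weights $\lambda_k$ matters and where one must carefully check that the finite-dimensional Ornstein--Uhlenbeck operator applied to $g_n$ coincides with $L^X$ applied to the cylindrical function $G\circ P_n$ on the slice $P_n(X)$, whereas the remaining steps reduce to ``a finite sum is at most the corresponding infinite sum'' together with the definitions.
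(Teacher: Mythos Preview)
Your proof is correct and follows essentially the same approach as the paper's: both verify the regularity of $g_n$, the nonvanishing of $Dg_n$ on $\partial O_n$ for $n\ge k_0$ via \eqref{eq:Pn0}, the bounds \eqref{Ondelta} by dominating the finite sums with the infinite ones from \eqref{eq:A}--\eqref{eq:B}, and the curvature bound by identifying $h$ with $\mathcal H_n$. Your write-up is in fact more explicit than the paper's (which dispatches the lemma as ``a simple check''), particularly in spelling out the identification $h(\xi)=\mathcal H_n(x)$ term by term and in tracking the passage from $(a,b)$ to $(A,B)$ through Lemma~\ref{Le:g}; your closing remark that this identification is the ``main obstacle'' is perhaps overstated, since once the derivative identities $D_kg_n(\xi)=D_kG(x)$ are in hand it is indeed routine bookkeeping.
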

\begin{proof} The proof is a simple check.  It is clear that $g_n\in C^{2+\alpha}_{loc}(\R^n)$.  Moreover, 
$$D_ig_n (\xi) = D_iG(\sum_{i=1}^n \xi_ie_i), \quad D_{ij}g_n (\xi) = D_{ij}G(\sum_{i=1}^n \xi_ie_i), \qquad i,j=1, \ldots, n.$$
If  $\xi \in \partial O_n$, then $G(\sum_{i=1}^n \xi_ie_i)=0$ so that the gradient of $g_n$ at $\xi$ does not vanish by \eqref{eq:Pn0}, for $n\geq k_0$. 
Moreover, since $-\delta \leq g_n(\xi) \leq \delta$ iff $-\delta \leq G( \sum_{k\leq n}\xi_ke_k) \leq \delta$, we get
$$\sup_{-\delta \leq g_n(\xi) \leq \delta } |Q_n^{1/2}D g_n(\xi)| \leq  a, \quad \sup_{-\delta \leq g_n(\xi) \leq \delta} \|Q_n^{1/2}D^2g_n(\xi)Q_n^{1/2}\|_{HS} \leq b.$$
So, the functions $g_n$ satisfy \eqref{Ondelta} with constants $a$, $b$. Assumption \eqref{curvatura} implies that they satisfy \eqref{curvaturen} with supremum  $C\leq c$, for $n\geq n_0$.   \end{proof}

Set now $F_n  :=F\circ P_n$, and accordingly
\begin{equation}
\label{fn}
f_n(\xi) := F (\sum_{k\leq n}\xi_ke_k) , \quad \xi\in \R^n.
\end{equation}
As it is easy to see, $F_n$ converges to $F$ pointwise and in $L^2(X, \mu)$.
Corollary \ref{cor:stimen} guarantees that  for every $\lambda >0$   the weak solution $u_n $ of problem 
\begin{equation}
\label{probleman}
\left\{ \begin{array}{l}
\lambda u_n  - {\mathcal L}_n u_n  = f_n, \quad \mbox{\rm in}\; O_n, 
\\
\\
u _n= 0, \quad \mbox{\rm in}\;  \partial O_n
\end{array}\right. 
\end{equation}
belongs to $W^{2,2}(O_n, \mu_n)$ for large $n$, and satisfies 
\begin{equation}
\label{maggJFA}
\|u_n\|_{W^{2,2}(O_n, \mu_n)} \leq K\|f_n\|_{L^2(O_n, \mu_n)} = K\|F_n\|_{L^2(O_n, \mu)}\leq K  \|F\|_{\infty} 
\end{equation}
with $K=K(\lambda)>0$ independent of $n$. Here ${\mathcal L}_n$ is the Ornstein--Uhlenbeck operator defined in  \eqref{OUinRn}. 

By the definition of weak solution we get  
\begin{equation}
\label{Un}
U_n(x) =  \widetilde{u}_n(x_1, \ldots, x_n), \quad x\in X. 
\end{equation}
where $ \widetilde{u}_n$ the null extension of $u_n$ outside $O_n$. 
 So, $\|U_n\|_{ W^{1,2}(X, \mu)}$ is bounded by a constant independent of $n$, and a subsequence converges weakly to a limit function $V$ in $W^{1,2}(X, \mu)$. 

Now, our aim is 
\begin{itemize}
\item[(i)] to show that $V_{|\O}\in \oo{W}^{1,2}(\O, \mu)\cap W^{2,2}(\O, \mu)$, 
\item[(ii)]  to show that
$V_{|\O}$ is a weak solution to  \eqref{e1e} (so that, $V_{|\O}=U$).
\end{itemize}

The following lemma will be used twice. 

\begin{Lemma}
\label{Le:superfici}
Let $\nu $ be a Radon nondegenerate Gaussian measure in a Banach space $Y$, and let $\Phi: Y\mapsto \R$ be differentiable along the Cameron--Martin space $H$,  such that $D_H\Phi$ is continuous, and $D_H\Phi(y)\neq 0$  for each $y\in \Phi^{-1}(0)$. Then $\nu( \Phi^{-1}(0) ) =0$. 
\end{Lemma}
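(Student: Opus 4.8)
The plan is to reduce the infinite-dimensional statement to the well-known finite-dimensional fact that a nondegenerate Gaussian measure in $\R^d$ assigns zero mass to any $C^1$ hypersurface, and then to transfer this to $Y$ via the structure of Gaussian measures along the Cameron--Martin directions. First I would recall that, since $\nu$ is a nondegenerate Radon Gaussian measure, the Cameron--Martin space $H$ is infinite-dimensional in general but we only need one distinguished direction: fix $y_0 \in \Phi^{-1}(0)$; by hypothesis $D_H\Phi(y_0)\neq 0$, so there is $h\in H$ with $\langle D_H\Phi(y_0),h\rangle_H\neq 0$, and by continuity of $D_H\Phi$ this persists in a neighborhood of $y_0$ in $Y$. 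Along the line $t\mapsto y+th$ the function $t\mapsto \Phi(y+th)$ is $C^1$ with nonzero derivative near $t=0$, so locally the slice $\{t:\Phi(y+th)=0\}$ is a single point (or empty), in particular a Lebesgue-null subset of $\R$.

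The key step is then a disintegration argument. Decompose $Y = \R h \oplus Y_1$ where $Y_1 = \ker \ell$ for a suitable continuous linear functional $\ell$ with $\ell(h)=1$ (such $\ell$ exists because $h\in H\subset Y$ and $\nu$ is nondegenerate, so coordinate functionals separate points; one takes $\ell = \langle\, \cdot\,, Q^{-1/2}\cdot\rangle$-type or simply an element of $Y^*$ pairing nontrivially with $h$). The image of $\nu$ under $y\mapsto(\ell(y), y - \ell(y)h)$ is a Gaussian measure on $\R\times Y_1$, and by the general theory of Gaussian measures (Cameron--Martin / Fernique, see \cite{Bo}) it disintegrates as $\gamma_1 \otimes \nu_{y_1}$ where for $\gamma_1$-a.e.\ $y_1$ the conditional measure on the fibre $\{t h + y_1 : t\in\R\}$ is a nondegenerate one-dimensional Gaussian (nondegeneracy of the conditional law along the $H$-direction $h$ is exactly what makes $h\in H$ useful; a direction outside $H$ would give a degenerate, possibly Dirac, conditional). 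Then

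\begin{equation}
\nu(\Phi^{-1}(0)) = \int_{Y_1}\Big(\int_{\R} \one_{\{\Phi(th+y_1)=0\}}\, d(\text{1-dim Gaussian})(t)\Big)\, d\gamma_1(y_1).
\end{equation}

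For the inner integral: cover $\Phi^{-1}(0)$ by countably many open balls $B_j$ on each of which $\langle D_H\Phi,h\rangle_H$ has constant sign bounded away from $0$ (possible by continuity of $D_H\Phi$ together with $D_H\Phi\neq0$ on $\Phi^{-1}(0)$, and separability of $Y$ so that countably many balls suffice); on each $B_j$ the map $t\mapsto\Phi(th+y_1)$ is strictly monotone where its graph meets $B_j$, hence $\{t: th+y_1\in B_j,\ \Phi(th+y_1)=0\}$ has at most one point, so the set $\{t:\Phi(th+y_1)=0\}$ is countable, hence Lebesgue-null, hence null for the absolutely continuous one-dimensional Gaussian conditional. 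Therefore the inner integral vanishes for $\gamma_1$-a.e.\ $y_1$, and by Fubini $\nu(\Phi^{-1}(0))=0$.

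The main obstacle I anticipate is making the disintegration rigorous: one must justify that the conditional measures of $\nu$ along the fixed direction $h\in H$ are genuinely nondegenerate one-dimensional Gaussians, which uses the defining property of the Cameron--Martin space (translations by $h$ give equivalent measures) rather than anything about $\Phi$; and one must ensure measurability of the slice functions $y_1\mapsto \{t:\Phi(th+y_1)=0\}$ (this follows from continuity of $\Phi$ along $h$ and separability). A cleaner alternative that avoids explicit disintegration: show directly that for any Borel set $A$ with $\nu(A)>0$, the translated sets $A+sh$, $s\in\R$, cannot all be contained in $\Phi^{-1}(0)$ — indeed if $\nu(\Phi^{-1}(0))>0$ then by the Cameron--Martin theorem $\nu((\Phi^{-1}(0))+sh)>0$ for all $s$, and one derives a contradiction by integrating $\one_{\Phi^{-1}(0)}(y+sh)$ in $s$ against Lebesgue measure and using that for each fixed $y$ this is the indicator of a null set of $s$'s by the one-dimensional argument above, combined with Fubini and the local density of $\nu$ with respect to $\nu(\cdot+sh)$. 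Either route reduces the problem to the one-dimensional monotonicity fact, which is the only place the hypothesis $D_H\Phi\neq0$ on the zero set is used.
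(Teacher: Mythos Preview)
Your approach is genuinely different from the paper's and is in principle sound, but there is a real gap in the covering step. You fix a \emph{single} direction $h\in H$ (chosen so that $\langle D_H\Phi(y_0),h\rangle_H\neq 0$ at one particular $y_0$) and then claim one can cover all of $\Phi^{-1}(0)$ by balls on which $\langle D_H\Phi(\cdot),h\rangle_H$ is bounded away from zero. The justification you give (``possible by continuity of $D_H\Phi$ together with $D_H\Phi\neq 0$'') is not enough: $D_H\Phi(y)\neq 0$ does not force $\langle D_H\Phi(y),h\rangle_H\neq 0$ for your fixed $h$; at other points of $\Phi^{-1}(0)$ the gradient may be orthogonal to $h$. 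The fix is easy but must be made explicit: for each $y\in\Phi^{-1}(0)$ choose its own $h_y\in H$ and a ball $B_y$ on which $\langle D_H\Phi(\cdot),h_y\rangle_H$ stays away from zero, extract a countable subcover $\{B_j\}$ by Lindel\"of, and run your disintegration argument separately on each $B_j$ with direction $h_j$ to get $\nu(B_j\cap\Phi^{-1}(0))=0$; then sum. You should also be a bit more careful with the disintegration itself: the conditional law along the line $y_1+\R h$ is an absolutely continuous (nondegenerate) one-dimensional Gaussian precisely when you slice using the measurable linear functional $\hat h$ canonically associated with $h\in H$, not an arbitrary $\ell\in Y^*$ with $\ell(h)=1$; this is the point where ``$h\in H$'' is actually used.

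For comparison, the paper takes a much shorter route that avoids both of these issues: it restricts $\nu$ to an open neighborhood $A$ of $\Phi^{-1}(0)$ on which $D_H\Phi$ is everywhere nonzero, observes that the restricted measure $\widetilde\nu=\nu(\,\cdot\cap A)$ is still differentiable along $H$, and then invokes a general theorem (Bogachev, \emph{Differentiable measures and the Malliavin calculus}, Thm.~9.2.4) stating that for such a measure the image $\widetilde\nu\circ\Phi^{-1}$ on $\R$ is absolutely continuous with respect to Lebesgue measure; hence $\widetilde\nu(\Phi^{-1}(0))=0$. Your argument is essentially a hands-on proof of a special case of that theorem, which is more self-contained but requires the local-to-global patching and the explicit conditional-Gaussian structure that the cited result packages for you.
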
  
\begin{proof}
Since $D_H\varphi \neq 0$ at $ \Phi^{-1}(0)$ and $D_H\Phi$ is continuous, there exists an open neighborhood $  A$ of $ \Phi^{-1}(0)$ such that $D_H\varphi(y)\neq 0$ for each $y\in \overline{  A}$. 
Let us consider the  measure $\widetilde{\nu}$ defined by 
$$\widetilde{\nu}( B) := \nu({  A} \cap B), \quad B\in {\mathcal B}(Y),$$
which is still a Radon measure   in $Y$, and whose support is $\overline{A}$.   $\nu$ is differentiable, hence continuous,  in the directions of its Cameron-Martin space (\cite[Prop. 5.1.6]{Bo}), 
and so is $\widetilde{\nu}$. The set $C:=\{ y\in Y:\;  D_H\Phi(y)=0\}$  does not intersect $\overline{A}$, so that $\widetilde{\nu}(C)=0$. By \cite[Thm. 9.2.4]{Boga2}, the measure $\widetilde{\nu} \circ \Phi^{-1}$ is absolutely continuous with respect to the Lebesgue measure. In particular, $0=\widetilde{\nu}( \Phi^{-1}(0))= \nu( \Phi^{-1}(0))$. 
\end{proof}

To begin with, we prove the regularity statement (i).

\begin{Proposition}
\label{Pr:V}
Let $V$ be any weak limit of a subsequence of $U_n$ in $W^{1,2}(X, \mu)$. Then   the restriction of $V$ to $\O$ belongs to  $W^{2,2}(\O, \mu)\cap \oo{W}^{1,2}(\O, \mu)$, and there is $N>0$, independent of $F$, such that 
\begin{equation}
\label{stimaV}
\|V_{|\O}\|_{W^{2,2}(\O, \mu)} \leq N \|F\|_{L^2(X, \mu)}. 
\end{equation}
\end{Proposition}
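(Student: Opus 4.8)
The plan is to exploit the uniform bounds \eqref{maggJFA} on the cylindrical solutions $U_n = \widetilde u_n(x_1,\ldots,x_n)$ together with lower semicontinuity of norms under weak convergence, and to identify the limit of the null extensions via Lemma \ref{Le:superfici}. Passing to the subsequence along which $U_n \rightharpoonup V$ in $W^{1,2}(X,\mu)$, I would first record what the uniform estimates give on $\O$ and on $\O_n$. By \eqref{maggJFA} and the finite-dimensional identification \eqref{Un}, $\|U_n\|_{W^{1,2}(X,\mu)} \le K\|F\|_\infty$ and, since $u_n \in W^{2,2}(O_n,\mu_n)$, also $\|U_n\|_{W^{2,2}(\O_n,\mu)} \le K\|F\|_\infty$ with $K$ independent of $n$. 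The main technical point is that these $W^{2,2}$ bounds are on the moving domains $\O_n$, not on $\O$, so I cannot simply take weak limits in $W^{2,2}(\O,\mu)$; I have to first trap the $W^{2,2}$ mass on a fixed slightly smaller region and then exhaust.

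Here is the step sequence. \textbf{Step 1.} Show $V_{|\O} \in \oo W^{1,2}(\O,\mu)$, i.e.\ that $\widetilde V$ (the null extension of $V_{|\O}$) equals $V$ a.e., equivalently $V = 0$ a.e.\ on $X\setminus\O$. Each $U_n$ vanishes on $X\setminus\O_n$; since $\O_n = (G\circ P_n)^{-1}(-\infty,0)$ and $G\circ P_n \to G$ pointwise with $D_H(G\circ P_n)(x) = P_nD_HG(P_nx)$, one gets $\one_{X\setminus\O_n} \to \one_{X\setminus\O}$ a.e.\ off the boundary set $\{G=0\}\cup\bigcup_n\{G\circ P_n = 0\}$, which is $\mu$-null by Lemma \ref{Le:superfici} applied to $\Phi = G$ and to each $\Phi = G\circ P_n$. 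Mazur's lemma turns the weak convergence $U_n\rightharpoonup V$ into strong (hence a.e.\ along a further subsequence) convergence of convex combinations, and these convex combinations still vanish a.e.\ on sets shrinking to $X\setminus\O$; passing to the limit yields $V = 0$ a.e.\ on $X\setminus\O$, so $V_{|\O}\in \oo W^{1,2}(\O,\mu)$ with $\|V_{|\O}\|_{\oo W^{1,2}(\O,\mu)} = \|V\|_{W^{1,2}(X,\mu)} \le \liminf \|U_n\|_{W^{1,2}(X,\mu)} \le K\|F\|_\infty$.

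\textbf{Step 2.} Show $V_{|\O}\in W^{2,2}(\O,\mu)$ with the bound \eqref{stimaV}. Fix an open set $\O'$ with $\overline{\O'}\subset\O$, say $\O' = \{G < -\eta\}$ for small $\eta>0$; by pointwise convergence of $G\circ P_n$ and (if needed) a continuity/compactness argument on the behaviour of $G$ near a compact piece of the relevant region, one has $\O' \subset \O_n$ for $n$ large. On the fixed domain $\O'$ the functions $U_n$ then satisfy $\|U_n\|_{W^{2,2}(\O',\mu)}\le \|U_n\|_{W^{2,2}(\O_n,\mu)} \le K\|F\|_\infty$; since $W^{2,2}(\O',\mu)$ is a Hilbert space and $U_n\rightharpoonup V$ in $W^{1,2}$, the weak derivatives of order two of $U_n$ on $\O'$ are bounded in $L^2$, so a further subsequence converges weakly there and the limit must be the distributional second derivative of $V$ restricted to $\O'$; lower semicontinuity gives $\|V\|_{W^{2,2}(\O',\mu)}\le K\|F\|_\infty$. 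Now let $\eta\downarrow 0$: the sets $\O' = \{G<-\eta\}$ increase to $\O$ (using $\mu(\{G=0\})=0$ from Lemma \ref{Le:superfici}), so by monotone convergence $\int_\O(|V|^2 + |D_HV|_H^2 + \|D_H^2V\|_{HS}^2)\,d\mu = \lim_{\eta\to 0}\int_{\{G<-\eta\}}(\cdots)\,d\mu \le K^2\|F\|_\infty^2$, proving $V_{|\O}\in W^{2,2}(\O,\mu)$ and \eqref{stimaV} with $N = K$ (and then extended from $F\in C_b(X)$ to $F\in L^2$ by density, matching the constant's independence of $F$).

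\textbf{Main obstacle.} The delicate point is Step 2's passage from the moving domains $\O_n$ to $\O$: I must be sure that a genuinely interior region $\O' \Subset \O$ is eventually contained in all $\O_n$, which requires controlling $G\circ P_n - G$ not merely pointwise but uniformly on the set $\overline{\O'}$ — here one uses the $H$-regularity of $G$ together with \eqref{eq:A} to bound $|G(P_nx) - G(x)|$ by $a\,|D_HG|$-type increments along $H$ and the fact that $P_nx\to x$, or more carefully a covering argument since $\overline{\O'}$ need not be compact. Once interior containment is secured, the weak-compactness and lower-semicontinuity steps are routine; the identification of the limit of $\one_{X\setminus\O_n}$ via Lemma \ref{Le:superfici} is the other place where a genuine (though short) argument is needed, to handle the countable union of boundary sets $\{G\circ P_n = 0\}$.
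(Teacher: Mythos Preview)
Your Step~1 is correct, though the paper argues more directly by writing $\int_{\O^c}V^2\,d\mu=\lim_n\int_{\O_n\cap\O^c}U_nV\,d\mu$ and showing the right-hand side vanishes via $\mu(\O_n\cap\O^c)\to 0$; the Mazur route is a legitimate alternative.

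The genuine gap is in Step~2, exactly at the point you yourself flag. The inclusion $\O'=\{G<-\eta\}\subset\O_n$ for all large $n$ is \emph{false} in general, and neither of your proposed fixes works. The $H$-regularity argument fails because $x-P_nx=\sum_{k>n}x_ke_k$ lies in $H$ only when $\sum_{k>n}x_k^2/\lambda_k<\infty$, i.e.\ only for $x\in H$, a $\mu$-null set; for generic $x$ you cannot bound $|G(P_nx)-G(x)|$ through \eqref{eq:A}. A covering/compactness argument is hopeless since $\overline{\O'}$ is not compact (nor $\sigma$-compact in any useful way) in infinite dimensions. Concretely, take a half-space $G(x)=\langle b,x\rangle-c$: for any $n$ and any $\eta>0$ choose $x$ with $\sum_{k\le n}b_kx_k=c$ and $\sum_{k>n}b_kx_k<-\eta$ (always possible in $X$); then $G(x)<-\eta$ but $G(P_nx)=0$, so $\{G<-\eta\}\not\subset\O_n$ for \emph{any} $n$. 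Without this inclusion $U_n$ is only $W^{1,2}$, not $W^{2,2}$, across $\partial\O_n\cap\O'$, and your bounded-in-$W^{2,2}(\O',\mu)$ claim collapses.

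The paper avoids the issue entirely. Instead of a fixed interior $\O'$, it works with $D_{ij}U_n\,\one_{\O\cap\O_n}$, which is bounded in $L^2(\O,\mu)$ straight from \eqref{maggJFA}, extracts a weak limit $\psi_{ij}$, and identifies $\psi_{ij}=D_{ij}V$ by testing against $\Phi\in C^1_b(\overline\O)$ vanishing on $\partial\O$. The key trick is that $\Phi\circ P_n$ then \emph{automatically} vanishes on $\partial\O_n$ (since $x\in\partial\O_n$ forces $P_nx\in\partial\O$), so integration by parts holds on $\O_n$; the leftover integrals over $\O_n\setminus\O$ and $\O\setminus\O_n$ are controlled using only $\mu(\O_n\triangle\O)\to 0$, which follows from pointwise convergence $G(P_nx)\to G(x)$ and dominated convergence --- no uniform containment needed.
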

\begin{proof}
First of all we prove that $V$ vanishes a.e. in $\O^c$. To avoid sub-subindices, we shall assume that $U_n$ converges weakly to $V$. Since $U_n$ vanishes outside $\O_n$, we have
\begin{equation}
\label{V^2}
\int_{\O^c} V^2d\mu = \lim_{n\to \infty} \int_{\O^c} U_nV\, d\mu 
=  \lim_{n\to \infty} \int_{\O_n \cap{\mathcal O}^c} U_nV\, d\mu .
\end{equation}
Let us prove that $ \int_{\O_n \cap{\mathcal O}^c} U_nV\, d\mu $ goes to $0$ as $n\to \infty$. 

Lemma \ref{Le:superfici} implies  that  $\mu(\partial \O) =0$.  
We claim that  $\mu(\O_n \cap \O^c) = \mu(\O_n \cap \overline{\O}^c)$ vanishes as $n\to \infty$. Indeed, if  $x\in \overline{\O}^c$ then  $G(x) > 0$, hence eventually  $G(P_nx)>0$, that is  $x\notin \overline{\O}_n$. Therefore,  $\lim_{n\to \infty}\one_{\overline{\O}_n \cap \overline{\O}^c} =0$. By dominated convergence, 
$\mu ( \O_n \cap \overline{\O}^c) = \mu (\overline{\O}_n \cap \overline{\O}^c)= \int_X \one_{\overline{\O}_n \cap \overline{\O}^c}d\mu $ vanishes as $n\to \infty$. 

For each $n\in \N$ let us estimate:
$$ |\int_{\O_n \cap \O ^c} U_nV\, d\mu |\leq \|U_n\|_{L^2(\O_n \cap \O^c)}
\bigg(\int_{\O_n \cap \O^c}V^2 d\mu\bigg)^{1/2}$$
where $\|U_n\|_{L^2(\O_n \cap \O^c)}\leq \|U_n\|_{L^2(X, \mu)}\leq const.$ and 
$\int_{\O_n \cap \O^c}V^2 d\mu$ vanishes as $n\to \infty$ since $V^2\in L^1(X, \mu)$ and $\mu(\O_n \cap \O^c)$ vanishes as $n\to \infty$. By \eqref{V^2}, $\int_{\O^c} V^2d\mu =0$ so that 
  $V$ vanishes a.e. on $\O^c$. 

\vspace{3mm}
Now, we prove that $V$ has second order weak derivatives in $\O$. 
By   \eqref{maggJFA},   the restrictions of  $U_n$ to ${\mathcal O}_n$ belong to  $W^{2,2}({\mathcal O}_n, \mu)$ and 
$$
\int_{{\mathcal O}_n} \sum_{h,k}\lambda_h(D_{hk}U_n)^2\lambda_k \,d\mu \leq     K^2 \|F_n\|_{L^2(X, \mu)}^2, \quad n\geq n_0.
$$
Therefore, fixed any $i,j\in \N$, the functions $ D_{ij}U_n \one_{\O \cap \O_n}$ are bounded in $L^2(\O, \mu)$ by a constant independent of $n$. Then, there is a subsequence that converges weakly to a function $\psi_{ij} \in L^2(\O, \mu)$, and 
\begin{equation}
\label{deb}
\int_{{\mathcal O} } \sum_{h,k} \lambda_h \psi_{hk} ^2 \lambda_k\,d\mu \leq K^2 \lim\sup_{n\to \infty}\|F_n \|_{L^2(X, \mu)}^2 = K^2  \|F \|_{L^2(X, \mu)}^2. 
\end{equation}
Let us prove that $\psi_{ij} $ is the weak derivative of $D_jV$ in the direction $e_i$. For each $\Phi \in C^1_b(\O )$ that vanishes at $\partial \O$, the function $\Phi\circ P_n$ belongs to $  C^1_b(\O _n )$ and vanishes at $\partial \O_n$. So, 
\begin{equation}
\label{derdeboli}
\int_{\O_n} D_{ij}U_n\, \Phi\circ P_n\,d\mu =  \int_{\O_n} D_jU_n \bigg( -D_i(\Phi\circ P_n)+ \frac{x_i}{\lambda_i}\,\Phi\bigg) d\mu, \quad n\in \N. 
\end{equation}
The integral in the left hand side may be split  as 
$$\begin{array}{l}
\ds \int_{\O_n\setminus \O}D_{ij}U_n( \Phi\circ P_n)\,d\mu +  \int_{\O} D_{ij}U_n \, \one_{\O \cap \O_n}\Phi \, d\mu + 
\\
\\
\ds +  \int_{\O} D_{ij}U_n \, \one_{\O \cap \O_n}((\Phi\circ P_n)- \Phi) d\mu := I_{1,n}+I_{2,n} +I_{3,n},
\end{array}$$
where 
$$|I_{1,n}|\leq \|D_{ij}U_n\|_{L^2(\O_n, \mu)}\mu (\O_n\setminus\O)^{1/2}\|\Phi\|_{\infty}$$
 vanishes as $n\to \infty$, 
$I_{2,n}$ goes to $ \int_{\O} \psi_{ij} \Phi \, d\mu$ along the converging subsequence, 
$$|I_{3,n}|\leq  \|D_{ij}U_n\|_{L^2(\O_n, \mu)}\bigg(\int_{\O}(\Phi\circ P_n- \Phi)^2 d\mu \bigg)^{1/2}$$
vanishes as $n\to \infty$ by dominated convergence (recall that $\Phi$ is continuous, so that $\Phi\circ P_n$ converges pointwise to $\Phi$). 
 
Hence, the left-hand side of \eqref{derdeboli} goes to $ \int_{\O} \psi_{ij} \Phi \, d\mu$, at least along the converging subsequence. Concerning the  right-hand side, since   $U_n$ converges weakly in $W^{1,2}(X, \mu)$ to $V$ and  the $W^{1,2}$-norm of $U _n$ is bounded by a constant independent of $n$, then $D_jU_n \, \one_{\O \cap \O_n}$ converges weakly in $L^2(X, \mu)$ to $D_jV  \, \one_{\O}$. 
Therefore, splitting the  right-hand side  as
$$\begin{array}{l}
\ds
\int_{\O_n\setminus \O} D_jU_n \bigg(-D_i(\Phi\circ P_n)+ \frac{x_i}{\lambda_i} \Phi\circ P_n\bigg) d\mu + \int_{\O } D_jU_n \, \one_{\O \cap \O_n} \bigg(- D_i\Phi + \frac{x_i}{\lambda_i} \Phi\bigg)\,d\mu
\\
\\
\ds + 
 \int_{\O } D_jU_n \, \one_{\O \cap \O_n} \bigg(-D_i(\Phi\circ P_n)+ D_i\Phi + \frac{x_i}{\lambda_i}(\Phi\circ P_n-\Phi) \bigg)d\mu 
 \end{array}$$
and arguing as above, we see that the right-hand side  of \eqref{derdeboli} goes to $ \int_{\O } (-D_jVD_i\Phi +x_i \Phi/\lambda_i ) d\mu$
as $n\to \infty$. So, we have 
$$\int_{\O } \psi_{ij}  \Phi \,d\mu = \int_{\O } D_jV\bigg(-D_i \Phi +  \frac{x_i}{\lambda_i} \Phi\bigg) d\mu$$
which means that  the weak derivative $D_{ij}V$ in $\O$ exists and coincides with $\psi_{ij}$. 
Now \eqref{deb} implies that $V_{|\O}\in W^{2,2}(\O, \mu)$.  

Estimate \eqref{stimaV} follows now easily. Indeed, estimate \eqref{stimaX1} implies 
$\|U_n\|_{W^{1,2}(X, \mu)}^2 \leq  (\lambda^{-2}+ 2\lambda^{-1})\|F_n\|_{L^2(X, \mu)}^2$ for $n\geq n_0$, that in its turn implies $\|V\|_{W^{1,2}(\O, \mu)}  = \|V\|_{W^{1,2}(X, \mu)}     \leq  C(\lambda)$ $\|F\|_{L^2(X, \mu)} $.

Estimate \eqref{deb} and the equalities $D_{ij}V=\psi_{ij}$ in $\O$ imply $\int_{{\mathcal O} } \sum_{h,k} \lambda_h (D_{hk}V )^2 \lambda_k\,d\mu$ $ \leq  K^2\|F \|_{L^2(X, \mu)}^2$  for $n\geq n_0$, and  \eqref{stimaV} follows. 
\end{proof}

To identify  any weak limit $V$ with the weak solution $U$ to  \eqref{e1e}  we use the representation formula \eqref{1.9w}.  
 
\begin{Proposition}
\label{Pr:convessi}
For each $F\in C_b(X)$ the sequence  $(U_ {n| \O})$ converges pointwise (hence, in $L^2(\O, \mu)$) to the weak solution $U$ of  \eqref{e1e}.  Consequently, if a subsequence $U_{n_k}$ converges weakly to $V$ in $L^2(X, \mu)$, then $V_{|\O} = U$. 
\end{Proposition}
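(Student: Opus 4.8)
The goal is to prove that $U_{n|\O}\to U$ pointwise for $F\in C_b(X)$, where $U=R(\lambda,L^\O)F$ is represented by the probabilistic formula \eqref{1.9w}. Since $U_n$ likewise coincides with the resolvent applied to $F\circ P_n$ on the cylindrical set $\O_n$, we have the analogous representation
\[
U_n(x)=\int_0^{+\infty}e^{-\lambda t}\,(T^{\O_n}(t)(F\circ P_n))(x)\,dt,\qquad x\in\O_n.
\]
By dominated convergence in $t$ (both integrands are bounded in modulus by $e^{-\lambda t}\|F\|_\infty$), it suffices to fix $x\in\O$ and $t>0$ and show $(T^{\O_n}(t)(F\circ P_n))(x)\to (T^\O(t)F)(x)$ as $n\to\infty$. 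Note $x\in\O$ means $G(x)<0$, hence $G(P_nx)<0$ eventually, so $x\in\O_n$ for large $n$ and the left-hand side makes sense.

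\textbf{Rewriting the semigroups as integrals against path laws.} Following the outline in the Introduction, write
\[
(T^{\O}(t)F)(x)=\int_{\Lambda_t}F(\eta(t))\,\mu_{t,x}(d\eta),\qquad
(T^{\O_n}(t)(F\circ P_n))(x)=\int_{\Lambda_t}F(\eta(t))\,\mu_{t,x}^{(n)}(d\eta),
\]
where $\mu_{t,x}$ is the law in $C([0,t];X)$ of the Ornstein--Uhlenbeck path $X(\cdot,x)$ solving \eqref{e1.6}, $\mu_{t,x}^{(n)}$ is the law of $P_nX(\cdot,x)=X(\cdot,P_nx)$, and $\Lambda_t=\{\eta\in C([0,t];X):\sup_{0\le s\le t}G(\eta(s))\le 0\}$. (For the second formula one uses that $P_n$ commutes with the dynamics, so $X(\cdot,P_nx)$ lives in $P_n(X)$, together with $G(P_n\eta(s))=g_n$ evaluated along the finite-dimensional path; and $F(P_n\eta(t))=(F\circ P_n)(\eta(t))$.) These identities are precisely \eqref{e1.8} specialized to indicators-of-hitting-times rewritten as path integrals, valid since $F\in C_b$.

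\textbf{The three ingredients and the limit.} The plan is then to establish, in order: (a) $\mu_{t,x}^{(n)}\rightharpoonup\mu_{t,x}$ weakly as measures on $C([0,t];X)$ --- this follows because both are Gaussian measures (images of the Gaussian process under the continuous linear maps "identity" and $P_n$), with $P_nX(\cdot,x)\to X(\cdot,x)$ in $L^2(\Omega;C([0,t];X))$ by dominated convergence, forcing convergence of means and covariances and hence weak convergence of the Gaussian laws; (b) $\mu_{t,x}(\partial\Lambda_t)=0$, where $\partial\Lambda_t\subset\{\eta:\sup_{0\le s\le t}G(\eta(s))=0\}$: here one applies Lemma \ref{Le:superfici} on the Banach space $Y=C([0,t];X)$ with $\nu=\mu_{t,x}$ and $\Phi(\eta)=\sup_{0\le s\le t}G(\eta(s))$, after checking that $\Phi$ is $H$-differentiable off the boundary with nonvanishing derivative --- morally because the OU path visits $G^{-1}(0)$ transversally, using $D_HG\ne 0$ on $G^{-1}(0)$; (c) the integrand $\eta\mapsto F(\eta(t))\one_{\Lambda_t}(\eta)$ is bounded and $\mu_{t,x}$-a.e. continuous on $C([0,t];X)$ by (b) and continuity of $F$ and of $\eta\mapsto\eta(t)$. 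The portmanteau theorem for weakly convergent measures then yields $\int F(\eta(t))\,\one_{\Lambda_t}\,d\mu_{t,x}^{(n)}\to\int F(\eta(t))\,\one_{\Lambda_t}\,d\mu_{t,x}$, which is exactly $(T^{\O_n}(t)(F\circ P_n))(x)\to(T^\O(t)F)(x)$. Integrating in $t$ gives $U_n(x)\to U(x)$; pointwise convergence plus the uniform bound $|U_n|\le\|F\|_\infty/\lambda$ gives $L^2(\O,\mu)$ convergence by dominated convergence. Finally, if $U_{n_k}\rightharpoonup V$ weakly in $L^2(X,\mu)$, then testing against $L^2$ functions supported in $\O$ and comparing with the just-proved strong convergence forces $V_{|\O}=U$.

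\textbf{Main obstacle.} The delicate point is ingredient (b), that $\mu_{t,x}$ charges no mass on the boundary of $\Lambda_t$. The set $\Lambda_t$ is a sub-level set of the supremum functional $\Phi(\eta)=\sup_{s\le t}G(\eta(s))$, which is only Lipschitz (not smooth) in general, and its topological boundary need not coincide with $\Phi^{-1}(0)$; one must argue that "grazing" paths (those with $\max_s G(\eta(s))=0$ but $G(\eta(s))<0$ for $s$ near $t$, or touching tangentially) form a $\mu_{t,x}$-null set. The natural route is to reduce to Lemma \ref{Le:superfici} by producing, on a neighborhood of the relevant boundary, an $H$-$C^1$ function with nonvanishing $H$-gradient whose zero set contains $\partial\Lambda_t$; the nondegeneracy of the OU covariance together with $D_HG\ne 0$ at $G^{-1}(0)$ should make the hitting location a genuinely "transversal" event, so that the law of the first (or last) time $G(\eta(s))=0$ has no atoms and the touching set is negligible. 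Handling this rigorously --- in infinite dimensions, on the path space $C([0,t];X)$ --- is where the real work lies.
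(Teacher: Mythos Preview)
Your overall architecture --- rewrite both semigroups as path-space integrals, prove weak convergence $\mu_{t,x}^{(n)}\rightharpoonup\mu_{t,x}$, prove $\mu_{t,x}(\partial\Lambda_t)=0$, then invoke a portmanteau-type theorem --- is exactly the paper's. Step (a) and step (c) are fine. The gap is in (b), and you have correctly flagged it as the hard part, but the route you sketch is both unnecessary and unlikely to succeed.

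You propose to apply Lemma~\ref{Le:superfici} with $\Phi(\eta)=\sup_{0\le s\le t}G(\eta(s))$. That lemma requires $\Phi$ to be $H$-differentiable with continuous $H$-gradient, and the supremum of a family of smooth functions is generically only Lipschitz; there is no reason to expect $H$-$C^1$ regularity near the level set $\Phi^{-1}(0)$, precisely where you need it. Your fallback --- manufacture some other $H$-$C^1$ function whose zero set contains $\partial\Lambda_t$, via a transversality/hitting-time argument --- would be delicate in infinite dimensions and is not what the paper does.

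The paper's observation is that you do not need a single smooth function cutting out $\partial\Lambda_t$. Since $\Lambda_t=\bigcap_{s\in\Q\cap[0,t]}\{\eta:G(\eta(s))\le 0\}$ is a countable intersection of closed sets, one has
\[
\partial\Lambda_t\ \subset\ \bigcup_{s\in\Q\cap[0,t]}\{\eta\in C([0,t];X):\ G(\eta(s))=0\}.
\]
Now for each fixed $s$ the evaluation functional $\Phi_s(\eta):=G(\eta(s))$ \emph{is} $H$-$C^1$ on path space: by Proposition~\ref{App} the Cameron--Martin space $\mathcal H$ of $\mu_{t,x}$ is contained in $C([0,t];H)$, so $D_{\mathcal H}\Phi_s(\eta)(h)=\langle D_HG(\eta(s)),h(s)\rangle_H$, which is nonzero on $\Phi_s^{-1}(0)$ because $D_HG\neq 0$ on $G^{-1}(0)$. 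Lemma~\ref{Le:superfici} then gives $\mu_{t,x}(\Phi_s^{-1}(0))=0$ for each rational $s$, and countable subadditivity finishes (b). This bypasses the supremum entirely; the missing idea in your write-up is simply this countable decomposition. Note also that the inclusion $\mathcal H\subset C([0,t];H)$ is essential here, since $G$ is assumed only $H$-differentiable, not Fr\'echet differentiable on $X$; you should cite Proposition~\ref{App} for it.
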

\begin{proof} 
For each  $x\in X$ let us consider  the solution $X(t,x)$  to  \eqref{e1.6}. For any $t_0>0$, the restriction of $X(\cdot, x)$ to $[0, t_0]$  is a Gaussian random variable with values in $C([0,t_0];X)$ (see the Appendix), as well as the restriction of $X_n:=P_nX$.  Denoting by $ \mu_{t_0,x}$ the law of $X(\cdot, x)$ in $ C([0,t_0];X)$ and using  \eqref{e1.7} and  \eqref{e1.8}, if $x\in \O$ we have
$$ (T^{\O}(t_0) F_{|\O})(x) =   \int_{\{\omega:\, X(s,x)(\omega)\in \overline{\O}, \,\forall s\in [0,t_0]\} }F(X(t_0,x))d\P 
= \int_{\Lambda_{t_0}} F(\eta (t_0)) \mu_{t_0,x}(d\eta)$$ 
where
\begin{equation}
\label{e5}
\Lambda_{t_0}:= \{\eta\in C([0,t_0];X):\;\sup_{s\in[0,t_0]}G(\eta(s))\le 0 \}.
\end{equation}

Fix $n\in \N$. The representation formula  \eqref{e1.8} yields 
$T^{\O_n}(t)\Phi(y)= \E[\Phi(X(t,y))\one_{\tau^n_y \ge t}]$, for each $t>0$, $\Phi\in C_b(\overline{\O_n})$ and $y\in \O_n$, where $\tau_y^n$ is the exit time of $X(\cdot, y)$ from $\overline{\O_n}$. For each $x\in \O$, $P_nx\in \O_n$, and 
$X(t, P_nx) = P_nX(t,x)$. So,  taking $t=t_0$, $\Phi =  F\circ P_{n|\O_n}$, $y=P_nx$ we have
$$\begin{array}{l}
(T^{\O_n}(t_0) F\circ P_{n|\O_n})(P_nx) =  \E[F(P_nX(t,x))\one_{\tau_{P_nx} \ge t_0}]
\\
\\
=\ds    \int_{\{\omega:\, P_nX(s,x)(\omega)\in \overline{\O_n}, \,\forall s\in [0,t_0]\} }F(P_nX(t_0,x))d\P 
= \int_{\Lambda_{t_0}} F(\eta (t_0)) \mu_{t_0,x}^{(n)}(d\eta)
\end{array}$$
where $ \mu_{t_0,x}^{(n)}$ is the law of $P_nX(\cdot, x)$ in $ C([0,t_0];X)$. Our aim is to show that for each $x\in \O$
and for each $t_0>0$ we have 
\begin{equation}
\label{convT(t)}
\lim_{n\to \infty} (T^{\O_n}(t_0) F\circ P_{n|\O_n})(P_nx) = (T^{\O}(t_0) F_{|\O})(x) .
\end{equation}
Once \eqref{convT(t)} is established, since $|T^{\O_n}(t_0) F\circ P_{n|\O_n}(P_nx) |\leq \|F\|_{\infty}$, 
  for each $\lambda >0$ by dominated convergence we obtain 
$$\lim_{n\to \infty}  \int_{0}^{+\infty} e^{-\lambda t}(T^{\O_n}(t) F\circ P_{n|\O_n})(P_nx)dt =  \int_{0}^{+\infty} e^{-\lambda t}(T^{\O}(t) F_{|\O})( x)dt $$
On the other hand, for $x\in \O $ we have $P_n x\in \O_n$, hence 
$\int_{0}^{+\infty} e^{-\lambda t}(T^{\O_n}(t) F\circ P_{n|\O_n})(P_nx)dt = U_n(P_nx) = U_n(x)$, while  $\int_{0}^{+\infty} e^{-\lambda t}(T^{\O}(t) F_{\O})(x)dt = U(x)$, by formula \eqref{1.9w}.  So,  $(U_ {n| \O})$ converges pointwise (hence, in $L^2(\O, \mu)$) to $(U_ {| \O})$. Consequently, if  any subsequence $U_{n_k}$ converges weakly to $V$ in $L^2(X, \mu)$, then $V_{|\O} = U$.
 
Formula \eqref{convT(t)} will be proved in three steps: first, we prove that $ \mu_{t_0,x}^{(n)}$ converges weakly to $ \mu_{t_0,x}$, then we prove that $ \mu_{t_0,x}(\partial \Lambda_{t_0})=0$, and eventually we conclude. 

\vspace{3mm}

\noindent {\em First step.} 
Let us prove that $ \mu_{t_0,x}^{(n)}$ converges weakly to $ \mu_{t_0,x}$. To this aim it is enough to show that 
\begin{equation}
\label{e10}
\lim_{n\to\infty}\E \bigg(\sup_{s\in[0,t_0]}|X(s,x)-X_n(s,P_nx)|^2 \bigg)=0.
\end{equation}
Setting $\varphi(t) := e^{-t/2}x$ and $\varphi_n(t) := e^{-t/2}P_nx$, we have $X(\cdot,x)(\omega)-\varphi =    \Gamma 
(\sqrt{Q}W(\cdot )(\omega)$ and $X_n(\cdot,P_nx)(\omega)-\varphi_n =   \Gamma 
((I-P_n)\sqrt{Q}W(\cdot )(\omega)$, where $\Gamma$ is the linear bounded operator in $C([0,t_0];X))$ defined in the Appendix (see \eqref{e3}). 
Then, 
$$\begin{array}{l} 
\sup_{s\in[0,t_0]}\|X(s,x)(\omega)-X_n(s,P_nx)(\omega)\|^2 \leq 
\\
\\
2(\|\Gamma\|_{ {\mathcal L}(C([0,t_0];X))}^2
\sup_{s\in[0,t_0]} \|(I-P_n)\sqrt{Q}W(s )(\omega)\| + \sup_{s\in[0,t_0]}\| \varphi(s) - \varphi_n(s)\|^2)
\end{array}$$
so that, recalling that  $(I-P_n)\sqrt{Q}W(\cdot )$ is a martingale and that $\E \|(I-P_n)W(t_0)\|^2 = 4t_0^2
 \;\mbox{\rm Tr}\;[(I-P_n) Q]$, 
$$\begin{array}{l}
 \E\left(\sup_{s\in[0,t_0]}\|X(s,x)-X_n(s,P_nx)\|^2 \right)
\leq 
\\
\\
2\|\Gamma\|_{ {\mathcal L}(C([0,t_0];X))}^2\cdot 4t_0^2
 \;\mbox{\rm Tr}\;[(I-P_n) Q] + 2 \|(I-P_n)x\|^2
 \end{array}$$
that vanishes as $n\to \infty$, and \eqref{e10} follows.

\vspace{3mm}

\noindent {\em Second step.}
Let us prove that $ \mu_{t_0,x}(\partial \Lambda_{t_0})=0$. We have
$$\Lambda_{t_0} = \bigcap_{s\in \Q \cap [0, t_0]}  \{\eta\in C([0,t_0];X):\; G(\eta(s))\le 0 \}, $$
and its complement in $C([0,t_0];X)$ is the set 
$$ \bigcup_{s\in \Q \cap [0, t_0]}  \{\eta\in C([0,t_0];X):\; G(\eta(s))> 0 \}. $$
Therefore, 
$$\begin{array}{lll}
\partial \Lambda_{t_0} & \subset &  \bigcup_{s\in \Q \cap [0, t_0]}\partial   \{\eta\in C([0,t_0];X):\; G(\eta(s))> 0 \}
\\
\\
&=  & \bigcup_{s\in \Q \cap [0, t_0]}   \{\eta\in C([0,t_0];X):\; G(\eta(s))= 0 \}.
\end{array}$$
So, it is enough to prove that  $\mu_{t_0,x} (\{\eta\in C([0,t_0];X):\; G(\eta(s))= 0 \})$  for each    $s\in [0, t_0]$. 

By Proposition \ref{App}, the Cameron--Martin space $\mathcal H$ of the measure $ \mu_{t_0,x}$ is contained in $C([0,t_0];H)$. Therefore, 
fixed any $s\in [0,t_0]$, the function 
$$\Phi: C([0,t_0];X)\mapsto \R, \quad \Phi(\eta) =  G(\eta(s)),$$
is continuously differentiable along $\mathcal H$, its  Frech\'et derivative at any $\eta_0\in C([0,t_0];X)$ is given by 
$$D_{\mathcal H}\Phi(\eta_0)(\eta) =  \langle D_HG(\eta_0(s)), \eta(s)\rangle , $$
and it does not vanish if $D_HG(\eta_0(s))\neq 0$. Since $D_H G\neq 0$ at $\partial \O $, then $D_{\mathcal H}\Phi(\eta_0)$ does not vanish at any $\eta_0\in \Phi^{-1}(0)$. Moreover, since $C([0,t_0];X)$ is separable, then $ \mu_{t_0,x}$ is a Radon   measure. Lemma \ref{Le:superfici} implies that $\mu_{t_0,x} \Phi^{-1}(0) =\mu_{t_0,x} (\{\eta\in C([0,t_0];X):\; G(\eta(s))= 0 \}) =0$, and the claim follows.

\vspace{3mm}

\noindent {\em Third step: conclusion.}
For each $F\in C_b(X)$ the set of discontinuities of the function $\eta \mapsto F(\eta(t_0))\one_{\Lambda_{t_0}}$ is contained in the boundary of $\Lambda_{t_0}$, whose measure vanishes, and therefore by a well known convergence theorem (e.g. \cite[Thm. 5.2(iii)]{Bill}) $\lim_{n\to \infty} \int_{\Lambda_{t_0}} F(\eta (t_0)) \mu_{t_0,x}^{(n)}(d\eta) = \int_{\Lambda_{t_0}} F(\eta (t_0)) \mu_{t_0,x}(d\eta)$, that is 
$$\lim_{n\to \infty} T^{\O_n}(t_0) F(P_nx) = T^{\O}(t_0) F(x).$$
\end{proof} 
 
We resume the results of this section in the next theorem. 
 
\begin{Theorem}
\label{Th:finale}
Let $G\in C^{2+\alpha}_{H,loc}(X)$ for some $\alpha >0$ satisfy \eqref{eq:Pn0},  \eqref{eq:A}, \eqref{eq:B}, \eqref{curvatura}, and let  $\O = \{x\in X:\; G(x) <0\}$. 
Then for each $F \in L^2(\O, \mu)$ the weak solution $U$ to \eqref{e1e} belongs to $W^{2,2}(\O, \mu)$, and the mapping $F\mapsto U$  is continuous  from $L^2(\O, \mu)$ to $W^{2,2}(\O, \mu)$. 
\end{Theorem}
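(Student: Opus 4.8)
\textbf{Proof plan for Theorem \ref{Th:finale}.}
The plan is to assemble the two main ingredients built in this section: the dimension-free a priori estimate for the finite-dimensional problems (Corollary \ref{cor:stimen} via Lemma \ref{Le:verifica}), and the probabilistic identification of the limit (Proposition \ref{Pr:convessi}). First I would reduce to the case $F\in C_b(X)$: the null extension $\widetilde F$ lies in $L^2(X,\mu)$, and $C_b(X)$ restricted to $\O$ is dense in $L^2(\O,\mu)$, so it suffices to prove the $W^{2,2}$ estimate for such $F$ and then extend by density and by completeness of $W^{2,2}(\O,\mu)$ (together with the continuity of $F\mapsto U$, which will follow from the estimate since the problem is linear).

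Next, for $F\in C_b(X)$ I would introduce the cylindrical approximations: set $g_n$, $O_n$, $f_n$ as in \eqref{gn}, \eqref{On}, \eqref{fn}, and let $u_n$ be the weak solution of \eqref{probleman}. By Lemma \ref{Le:verifica}, for $n$ large the pair $(g_n,O_n)$ satisfies the hypotheses of Theorem \ref{stimen} with constants $A,B,C$ independent of $n$; hence Corollary \ref{cor:stimen} gives $\|u_n\|_{W^{2,2}(O_n,\mu_n)}\le K\|f_n\|_{L^2(O_n,\mu_n)}\le K\|F\|_\infty$ with $K=K(\lambda)$ independent of $n$. Writing $U_n(x)=\widetilde u_n(x_1,\dots,x_n)$ as in \eqref{Un}, the sequence $(U_n)$ is bounded in $W^{1,2}(X,\mu)$, so a subsequence converges weakly in $W^{1,2}(X,\mu)$ to some $V$. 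Proposition \ref{Pr:V} then yields $V_{|\O}\in W^{2,2}(\O,\mu)\cap\oo{W}^{1,2}(\O,\mu)$ together with the bound \eqref{stimaV}, i.e. $\|V_{|\O}\|_{W^{2,2}(\O,\mu)}\le N\|F\|_{L^2(X,\mu)}$ with $N$ independent of $F$.

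It remains to identify $V_{|\O}$ with the weak solution $U$ of \eqref{e1e}. This is precisely the content of Proposition \ref{Pr:convessi}: for $F\in C_b(X)$ the functions $U_{n|\O}$ converge pointwise, hence in $L^2(\O,\mu)$ by dominated convergence (they are uniformly bounded by $\|F\|_\infty/\lambda$), to $U$; since our subsequence of $U_n$ also converges weakly in $L^2$ to $V$, uniqueness of the limit forces $V_{|\O}=U$. Combining this with \eqref{stimaV} gives $U\in W^{2,2}(\O,\mu)$ and $\|U\|_{W^{2,2}(\O,\mu)}\le N\|F\|_{L^2(\O,\mu)}$ for every $F\in C_b(X)$. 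Finally, given an arbitrary $F\in L^2(\O,\mu)$, I would pick $F_j\in C_b(X)$ with $F_{j|\O}\to F$ in $L^2(\O,\mu)$; the corresponding solutions $U_j=R(\lambda,L^\O)F_j$ form a Cauchy sequence in $W^{2,2}(\O,\mu)$ by the estimate applied to $F_j-F_k$, so they converge in $W^{2,2}(\O,\mu)$ to some $\widehat U$, which is also the $L^2$-limit of the $U_j$; since $R(\lambda,L^\O)$ is bounded on $L^2(\O,\mu)$, $\widehat U=U$, so $U\in W^{2,2}(\O,\mu)$ and the map $F\mapsto U$ is continuous from $L^2(\O,\mu)$ to $W^{2,2}(\O,\mu)$.

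The genuinely substantive steps have already been carried out in Propositions \ref{Pr:V} and \ref{Pr:convessi} and in Theorem \ref{stimen}; the present proof is essentially bookkeeping. The one point demanding a little care is that the weak $W^{1,2}$-limit $V$ is extracted only along a subsequence, so the identification $V_{|\O}=U$ must be argued for that subsequence (as in Proposition \ref{Pr:convessi}), and then — since the limit $U$ does not depend on the subsequence — a standard subsequence argument shows the whole sequence $U_n$ converges; this is where the pointwise convergence provided by the probabilistic representation \eqref{1.9w}, rather than mere weak compactness, is what makes the identification unambiguous.
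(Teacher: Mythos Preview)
Your proof is correct and follows essentially the same approach as the paper: reduce to $F\in C_b(X)$, invoke Lemma \ref{Le:verifica} and Corollary \ref{cor:stimen} to get uniform $W^{2,2}$ bounds on the cylindrical approximations, extract a weak $W^{1,2}(X,\mu)$ limit, apply Proposition \ref{Pr:V} for regularity and Proposition \ref{Pr:convessi} for the identification $V_{|\O}=U$, then pass to general $F$ by density. One tiny imprecision: the bound \eqref{stimaV} is stated with $\|F\|_{L^2(X,\mu)}$ rather than $\|F\|_{L^2(\O,\mu)}$, so in your density step you should (as the paper does) approximate the null extension $\widetilde F$ in $L^2(X,\mu)$; the two norms agree on $\widetilde F$, so your conclusion is unaffected.
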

\begin{proof} Let $F\in C_b(X)$ and let  $g_n$, $O_n$, $f_n$  be defined by \eqref{gn}, \eqref{On}, \eqref{fn}, respectively. By Lemma \ref{Le:verifica}, 
for $n$ large enough  the functions $g_n  $ satisfy the hypotheses of Theorem \ref{stimen}. Moreover, as already remarked   $\|f_n\|_{L^2(O_n, \mu_n)} $ is bounded by a constant independent of $n$. 
By Corollary \ref{cor:stimen}, the solutions 
$u_n$ of problems  \eqref{probleman} with  $f= f_n$ 
belong  to $\oo{W}^{1,2}(O_n, \mu_n) \cap W^{2,2}(O_n, \mu_n)$, and  their $W^{2,2}(O_n, \mu_n)$ norm is bounded by a constant independent of $n$. 
By \eqref{Un}, the sequence of the solutions $U_n$ to \eqref{Pn} is bounded in $W^{1,2}(X, \mu)$.  Then, a subsequence converges weakly to a limit function $V$ in $W^{1,2}(X, \mu)$, whose restriction to  $\O$ belongs to $\oo{W}^{1,2}(\O, \mu) \cap W^{2,2}(\O, \mu)$ by Proposition  \ref{Pr:V}. 
$V_{|\O}$ coincides with the weak solution $U$ to  \eqref{e1e} by Proposition   \ref{Pr:convessi}. 

Estimate  \eqref{stimaV}  implies that 
$\|U\|_{ W^{2,2}(\O, \mu)} \leq N\|F\|_{L^2(X, \mu)}$, with $N$ independent of $F$. Approximating the null extension $\widetilde{F}$ of any $F\in L^2(\O, \mu)$ by a sequence of functions in $C_b(X)$ yields the final statement. 
\end{proof}


\section{Examples}
\label{Examples}

\noindent {\em 1. Half--spaces.} The simplest examples of open sets satisfying our assumptions are half--spaces, 
$$\O = \{x\in X: \;\langle b, x\rangle <c\},  $$
where $b\in X\setminus \{0\}$ and $c\in \R$ are given. Indeed, the function $G(x):= \langle b, x\rangle -c$ is smooth and $ D G(x) = b $ at each $x$, so that $D_HG(x) = Q b$, $D^2_HG(x) = 0$, $L^X G = -G/2 -c/2$
 and assumptions \eqref{eq:Pn0}, \eqref{eq:A}, \eqref{eq:B}  are obviously satisfied. Since 
$$G_n(x) = \sum_{k=1}^{n} b_kx_k - c  , $$
for $n$ large enough ($n\geq \min\{k:\; b_k\neq 0\}$) $\O_n$ is a half--space in $X$, and 
$${\mathcal L}^XG_n(\xi) = -\frac{1}{2}  \sum_{k=1}^{n}  b_k x_k  = - G_n(x)/2 -c/2, \quad x\in X, $$
 $${\mathcal H}_n(x) = \frac{ \sum_{k=1}^{n}  b_k x_k }{ \sum_{k=1}^{n}\lambda_k  b_k^2} =  \frac{c}{ \sum_{k=1}^{n}\lambda_k  b_k^2}, \quad x \in \partial \O_n, $$
so that   the curvature condition \eqref{curvatura} holds. 
Then, all the assumptions of Theorem \ref{Th:finale} are satisfied. 

\vspace{3mm}

\noindent {\em 2. Graphs.} A natural generalization of half--spaces is the case where $\O$ is the region below the graph of a good function, $\O = \{ x\in X:\; x_k < \Phi(\widetilde{x}_k)\}$ for some $k\in \N$, where $\widetilde{x}_k = x - x_k e_k$ and $\Phi\in C^{2+\alpha}_{H,loc}(X)$. 
The function $G$ is now
$$G(x) = x_k -  \Phi(\widetilde{x}_k)= x_k -  \Phi(x - x_k e_k), \quad x\in X, $$
so that $|D_HG(x)|_H\geq \lambda_k^{1/2}$ at each $x$. Assumptions \eqref{eq:A}, \eqref{eq:B}  are satisfied if $|D_H\Phi|_H$ and $\|D^2_H\Phi\|_{HS}$  are bounded. 

For $n\geq k$ we have
$G_n(x) = x_k -   \Phi( P_nx - x_ke_k)$, 
and  $P_n D_HG  \neq 0$ at each $x$. Moreover, if $G_n(x) =0$ we have 
$$ 
{\mathcal H}_n(x) =  \frac{ \Phi + \sum_{h\leq n, h\neq k}(\lambda_h D_{hh} \Phi 
-x_hD_h \Phi )}{\lambda_k +  \sum_{h\leq n, h\neq k}\lambda_h(D_h \Phi )^2}
+ \frac{ \sum_{h, l\leq n, \,h\neq k, l\neq k}\lambda_h\lambda_l D_{hl} \Phi D_h \Phi D_l \Phi }{(\lambda_k +  \sum_{h\leq n, h\neq k}\lambda_h (D_h \Phi )^2)^2}
$$
where $\Phi$ and its derivatives are evaluated at $ P_nx - x_ke_k$. 
Obvious sufficient conditions for  \eqref{curvatura} hold are $\Phi \leq C$, $|\langle x, D\Phi(x)\rangle |\leq C$, $|D_{hl}\Phi|\leq C$,   for $h$, $l\neq k$, with $C$ independent of $h$ and $l$.

\vspace{3mm}

\noindent {\em 3. Spheres.}  Let $x^0\in X$, $r>0$ and consider the function $G(x) := \|x-x^0\|^2 - r^2$. We have
$$D G(x) = 2(x-x_0), \quad D^2G(x) = 2I, \quad L^XG(x) =  {\rm Tr}\,Q - \langle x-x^0, x\rangle  , $$
so that \eqref{eq:A},  \eqref{eq:B}  are   satisfied. Moreover, 
$$G_n(x) = \sum_{k=1}^{n} (x_k - x^0_k)^2 + \sum_{k=n+1}^{\infty} (x^0_k)^2 - r^2 $$
so that, for large $n$ (such that $\sum_{k=n+1}^{\infty} ( x^0_k)^2 < r^2$), $O_n$ is the open sphere centered at $\xi^0 = (x^0_1, \ldots , x^0_n)$ with radius $r_n = (r^2 - \sum_{k=n+1}^{\infty} (x^0_k)^2)^{1/2}$, $\O_n $ is the cylinder $O_n \times (I-P_n)(X)$, and \eqref{eq:Pn0} is satisfied. 
Moreover, $L^X G_n(x)  = \sum_{k=1}^{n}\lambda_k - \sum_{k=1}^{n} (x_k - x^0_k)x_k $ 
and
$${\mathcal H}_n(x) = \frac{1}{2 \|Q^{1/2}P_n(x - x^0)\|^2  }\bigg(\langle P_n(x-x^0),  x \rangle  - \sum_{k=1}^{n}\lambda_k + \frac{\|QP_n(x-x^0)\|^2 }{\|Q^{1/2}P_n(x- x^0)\|^2 } \bigg).$$
No upper bound independent of $n$ for $1/  \|Q^{1/2}P_n(x- x^0)\|^2$ on the   surface $\partial \O_n = \{x:\; G_n(x) =0\}$ is available: taking $x = (x^0_n + r_n)e_n$   we get $  1/  \|Q^{1/2}P_n(x - x^0)\|^2 = 1/ \lambda_nr_n^2$, that blows up as $n\to \infty$. However, condition \eqref{curvatura} is satisfied if 
\begin{equation}
\label{curv_sfere}
\langle P_n(x-x^0), x \rangle  - \sum_{k=1}^{n}\lambda_k + \frac{\|QP_n(x-x^0)\|^2}{\|Q^{1/2}P_n(x - x^0)\|^2} \leq 0, \quad \mbox{\rm for}\; \|P_n(x-x^0)\| = r_n, 
 \end{equation}
 for large $n$.  Using the obvious estimates  $|\langle P_n(x-x^0), x \rangle | \leq r_n( \|P_nx^0\|  + r_n)$ and 
$\|QP_n(x-x^0)\|^2  /\|Q^{1/2}P_n(x-x^0)\|^2  \leq \lambda_1$, we see that \eqref{curv_sfere} is eventually satisfied provided $r$ is small enough, and precisely
$$r(\|x^0\| + r) < \sum_{k=2}^{\infty} \lambda_k .$$
In this case, all the assumptions of Theorem \ref{Th:finale} hold. 

If $r$ is large, condition \eqref{curvatura} does not hold. Indeed, 
$$\sup  {\mathcal H}_n\geq {\mathcal H}_n(x_0 + r_ne_n) = \frac{1}{2\lambda_n r_n^2}\bigg( r_n^2 + r_n\langle x_0, e_n\rangle - \sum_{k=1}^{n-1}\lambda_k\bigg)$$
which blows up as $n\to \infty$ if $r^2> $ Tr$\,Q$. 

Note that if we change $G$ into $-G$, the functions ${\mathcal H}_n$ change sign. So, if $\O = \{ x\in X:\; \|x-x^0\|^2 > r^2\}$,  the curvature condition \eqref{curvatura} is eventually satisfied provided $r(\|x^0\| + r) > \sum_{k=1}^{\infty} \lambda_k$.

Similar considerations hold for ellipsoids of the type $\O = \{x\in X: \, \langle T(x-x^0), x-x^0\rangle < r^2\}$, with
$x_0\in X$,  $r>0$ and $T$ is a diagonal operator defined by  $Te_k = t_ke_k$, $k\in \N$, for a positive and bounded  sequence $(t_k)$. In this case condition \eqref{curv_sfere} is replaced by 
$$ \sum_{k=1}^{n}t_k(  x_k-x^0_k)  x_k  - \sum_{k=1}^{n}\lambda_k t_k + \frac{ \sum_{k=1}^{n}\lambda_k^2t_k^3(x_k-x^0_k) ^2}{ \sum_{k=1}^{n}\lambda_k t_k^2(x_k - x^0_k) ^2} \leq 0  \quad \mbox{\rm for}\;\sum_{k=1}^{n} t_k (x_k - x^0_k) ^2 = r^2, $$
which  is satisfied for large $n$ provided
$$r(\|T^{1/2}x^0\| + r) < \sum_{k\in \N, \,k\neq \overline{k}}^{\infty} \lambda_k t_k, $$
where $\overline{k}$ is any natural number  such that $\lambda_{ \overline{k}}t_{ \overline{k}}$ is the biggest eigenvalue of $QT$.

\vspace{3mm}

\noindent {\em 4. A specific example in $X=L^2((0,1), dx)$.}  Let 
$$X= L^2((0,1), dx), \quad \lambda_k = \frac{1}{\pi^2k^2}, \quad e_k(\xi) = \sqrt{2}\sin(k\pi\xi), \;0<\xi<1, \;k\in \N.$$
So, for any $f\in X$, $Qf =\varphi$ is the solution to 
$$\varphi'' = -f, \quad  \varphi(0)=\varphi(1) =0, $$
and
$$H= Q^{1/2}(X) = H^1_0(0,1), \quad |h|_H = \| h'\|_X. $$
We define a function $G:X\mapsto \R$ by
$$G(x) = \int_0^1 g(x(s))ds -r, \quad x\in X, $$
where $g:\R\mapsto \R$ is a $C^2$ function with bounded second order derivative, and $r$ is in the range of $g$. Moreover, we assume that there exists $C>0$ such that 
\begin{equation}
\label{g3}
 |g''(\xi) - g''(\eta)|\leq C|\xi-\eta|(|\xi|+|\eta|), \quad \xi, \;\eta \in \R . 
\end{equation}

Then, $G$ is well defined and differentiable at any $x\in X$, and
\begin{equation}
\label{g1}
DG(x) = g'\circ x, \quad x\in X.
\end{equation}
Unless $g'$ is linear, $DG:X\mapsto X$ is not differentiable. However, since $g''$ is bounded and \eqref{g3} holds,   $DG$ is differentiable at any point  along $C([0,1])$, in particular it is differentiable along $H$, and we have
\begin{equation}
\label{g2}D^2_HG(x)(h, k) = \int_0^1 g''(x(s))h(s)k(s)ds, \quad h, \;k\in H.
\end{equation}
In fact, \eqref{g3} implies that $D^2_HG$ is  locally Lipschitz continuous. Then,  our regularity assumptions on $G$ 
($G \in C^{2+\alpha}_{H, loc}(X)$) are satisfied.

\begin{Proposition}
\label{strano}
Let $g\in C^2 (\R)$ have bounded first and second order derivatives, satisfy \eqref{g3} and moreover
\begin{equation}
\label{g4}
|g'(\xi)|\geq a ,  \quad \xi g'(\xi) \leq \alpha g(\xi) + \beta, \qquad \xi \in \R ,
\end{equation}
for some $a>0$, $\alpha$, $\beta \in \R$. Then for every $r\in \R$, the function $G(x) = \int_0^1 g(x(s))ds -r$ satisfies all the assumptions of Theorem \ref{Th:finale}. 
\end{Proposition}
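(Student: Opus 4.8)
The plan is to verify, one after another, the four standing hypotheses of Theorem \ref{Th:finale} — namely \eqref{eq:Pn0}, \eqref{eq:A}, \eqref{eq:B} and \eqref{curvatura} — for $G(x)=\int_0^1 g(x(s))\,ds-r$; the regularity $G\in C^{2+\alpha}_{H,loc}(X)$ has already been recorded above as a consequence of \eqref{g3} and the boundedness of $g''$, both of which are among the present hypotheses. Throughout one uses the explicit formulas $D_HG(x)=Q(g'\circ x)$ and $D^2_HG(x)(h,k)=\int_0^1 g''(x(s))h(s)k(s)\,ds$ from \eqref{g1} and \eqref{g2}, together with $\lambda_k=1/(\pi^2k^2)$ and $e_k(s)=\sqrt2\,\sin(k\pi s)$.

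I would dispatch \eqref{eq:A}, \eqref{eq:B}, \eqref{eq:Pn0} by uniform pointwise bounds. For \eqref{eq:A}: $|D_HG(x)|_H=\|Q^{1/2}(g'\circ x)\|\le\lambda_1^{1/2}\|g'\|_\infty$ for every $x$. For \eqref{eq:B}: the bilinear form $D^2_HG(x)$ corresponds, with respect to an orthonormal basis of $H$, to the operator $Q^{1/2}M_{g''\circ x}Q^{1/2}$, where $M_{g''\circ x}$ is multiplication by $g''\circ x$, so $\|D^2_HG(x)\|_{HS}\le\lambda_1^{1/2}\|g''\|_\infty(\mathrm{Tr}\,Q)^{1/2}<\infty$, using that $Q^{1/2}$ has operator norm $\lambda_1^{1/2}$ and Hilbert--Schmidt norm $(\mathrm{Tr}\,Q)^{1/2}$ and that $\mathrm{Tr}\,Q=\sum_k\lambda_k<\infty$. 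For \eqref{eq:Pn0} the natural choice is $k_0=1$: since $g'$ is continuous and $|g'|\ge a>0$ it has constant sign, hence so does $g'\circ x$ for every $x$; as $e_1$ is strictly positive on $(0,1)$ this forces $|\langle g'\circ x,e_1\rangle|\ge\sqrt2\,a\int_0^1\sin(\pi s)\,ds=2\sqrt2\,a/\pi>0$, so $P_1D_HG(x)=\lambda_1\langle g'\circ x,e_1\rangle\,e_1\ne0$ for all $x$ (in particular $D_HG$ never vanishes).

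The heart of the matter is the curvature condition \eqref{curvatura}. Fix $n\ge1$ and $x\in\partial\O_n$, and set $y:=P_nx$ and $w:=D_HG_n(x)=QP_n(g'\circ y)\in H$; the decisive fact is that $\int_0^1 g(y(s))\,ds=r$. Two bounds, uniform in $n$ and in $x\in\partial\O_n$, are needed. First, a lower bound for the denominators: $|w|_H^2=\sum_{k\le n}\lambda_k\langle g'\circ y,e_k\rangle^2\ge\lambda_1\langle g'\circ y,e_1\rangle^2\ge\lambda_1\cdot8a^2\pi^{-2}=8a^2/\pi^4$, again using $|g'|\ge a$ and the positivity of $e_1$. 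Second, a lower bound for $L^XG_n$: writing $L^XG_n(x)=\tfrac12\sum_{k\le n}\lambda_k\int_0^1 g''(y(s))e_k(s)^2\,ds-\tfrac12\int_0^1 y(s)g'(y(s))\,ds$, the first sum has modulus at most $\|g''\|_\infty\sum_k\lambda_k=\|g''\|_\infty\,\mathrm{Tr}\,Q$, while \eqref{g4} together with the boundary identity $\int_0^1 g(y)\,ds=r$ gives $\int_0^1 y\,g'(y)\,ds\le\alpha\int_0^1 g(y)\,ds+\beta=\alpha r+\beta$; hence $L^XG_n(x)\ge-\tfrac12(\|g''\|_\infty\,\mathrm{Tr}\,Q+\alpha r+\beta)=:-\tfrac12M_0$. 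Plugging these into $\mathcal H_n(x)=-2L^XG_n(x)/|w|_H^2+D^2_HG_n(x)(w,w)/|w|_H^4$, the first term is at most $(\pi^4/8a^2)\max\{M_0,0\}$, while, since $D^2_HG_n(x)(w,w)=\int_0^1 g''(y(s))w(s)^2\,ds$ and $\|w\|_X^2\le\lambda_1|w|_H^2$, the second term is at most $\|g''\|_\infty\|w\|_X^2/|w|_H^4\le\|g''\|_\infty\lambda_1/|w|_H^2\le\|g''\|_\infty\lambda_1\pi^4/(8a^2)$. Thus $\sup_{x\in\partial\O_n}\mathcal H_n(x)\le c$ for a constant $c$ independent of $n$, which is \eqref{curvatura}. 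All hypotheses of Theorem \ref{Th:finale} being met, that theorem yields the assertion.

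I expect the only real difficulty to be the uniformity in $n$ in the last paragraph, where the two parts of assumption \eqref{g4} play complementary roles: the coercivity $|g'|\ge a$, used together with the special feature that the first eigenfunction $e_1$ of $Q$ keeps a constant sign, forces $|D_HG_n|$ to stay bounded away from $0$ on the whole approximating boundary $\partial\O_n$; and the one-sided growth bound $\xi g'(\xi)\le\alpha g(\xi)+\beta$, combined with the identity $\int_0^1 g(y)\,ds=r$ which holds exactly on $\partial\O_n$, keeps $L^XG_n$ bounded below there. Neither ingredient can be dropped: the example of large balls treated above already shows that $\sup_{\partial\O_n}\mathcal H_n$ may diverge as $n\to\infty$.
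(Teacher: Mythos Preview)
Your proof is correct and follows the same overall strategy as the paper: check \eqref{eq:A}, \eqref{eq:B}, \eqref{eq:Pn0} by direct pointwise bounds (the key observation being that $|g'|\ge a$ forces $g'$ to have constant sign, and $e_1>0$ on $(0,1)$), and then combine the resulting uniform lower bound on $|D_HG_n|_H^2$ with the one--sided growth condition in \eqref{g4} and the boundary identity $\int_0^1 g(P_nx)=r$ to bound $\mathcal H_n$ on $\partial\O_n$ uniformly in $n$.

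The only substantive difference from the paper lies in the handling of the $g''$ contributions to $\mathcal H_n$. You estimate the trace piece $\tfrac12\sum_{k\le n}\lambda_k\int_0^1 g''(y)e_k^2$ inside $L^XG_n$ and the term $D^2_HG_n(w,w)/|w|_H^4$ \emph{separately}, the latter via $\|w\|_X^2\le\lambda_1|w|_H^2$. The paper instead \emph{combines} them: writing $\varphi_n=D_HG_n(x)$ and $f_n(s)=2\sum_{k\le n}\lambda_k\sin^2(k\pi s)$, one has
\[
\mathcal H_n(x)=\frac{1}{|\varphi_n|_H^2}\Bigl(\int_0^1 g''(y(s))\Bigl(-f_n(s)+\frac{\varphi_n(s)^2}{|\varphi_n|_H^2}\Bigr)ds+\int_0^1 y(s)\,g'(y(s))\,ds\Bigr),
\]
and then notes, by Cauchy--Schwarz applied to $\varphi_n(s)=\sqrt2\sum_{k\le n}\lambda_k a_k\sin(k\pi s)$, that $\varphi_n(s)^2\le f_n(s)\,|\varphi_n|_H^2$ pointwise; hence the bracket multiplying $g''$ is nonpositive and only $\max\{-\inf g'',0\}\int_0^1 f_n\le\tfrac16\max\{-\inf g'',0\}$ survives. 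Your route is more elementary and entirely adequate for the statement; the paper's cancellation trick merely yields a sharper final constant (independent of $\mathrm{Tr}\,Q$).
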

\begin{proof} Since $|g'(\xi)|\geq a$ for every $\xi$, then the range of $g$ is the whole $\R$ and hence the range of $x\mapsto \int_0^1 g(x(s))ds$ is the whole $\R$. 

Assumptions  \eqref{eq:A}  and  \eqref{eq:B} are obvioulsy satisfied since $g'$ and $g''$ are bounded. 
Let us check that \eqref{eq:Pn0} holds. 
Since
$$ \langle DG( x), e_1\rangle =\sqrt{2}  \int_0^1 g'( x(s))\sin(\pi s)ds $$
by \eqref{g1}, we have $| \langle DG( x), e_1\rangle | \geq  2\sqrt{2}a $ for each $x\in X$, so that \eqref{eq:Pn0} is satisfied with $k_0=1$, and recalling that $G_n(x) = G(P_nx)$ we get
$$
|D_H G_n(x)|_H^2 \geq 8\lambda_1a^2 = \frac{ 8a^2}{ \pi^2} , \quad x\in X.
$$
To estimate ${\mathcal H}_n(x)$, we compute $L^XG_n(x)$. Using  \eqref{g1} and  \eqref{g2} we obtain
$$2L^XG_n(x) =\int_0^1 g''(P_nx(s))f_n(s)ds - \int_0^1 g'(P_nx(s)) (P_nx)(s)ds $$
where
$$f_n(s) = 2  \sum_{k=1}^n \lambda_k( \sin(k\pi s))^2$$
converges uniformly to 
$$f(s) = 2  \sum_{k=1}^{\infty} \lambda_k( \sin(k\pi s))^2 = s-s^2  $$
in the interval $[0,1]$. Therefore, setting $\varphi_n(s)  := D_H G_n(x)(s)$, we have
$$\varphi_n(s)  = P_nQ( g'\circ P_nx) (s) =  \sqrt{2} \sum_{k=1}^{n} \lambda_k a_k \sin(k\pi s), $$
with $a_k = \langle g'\circ P_nx, e_k\rangle$, and 
$$\begin{array}{lll}
{\mathcal H}_n(x) & = & \ds \frac{1}{|D_H G_n(x)|_H^2}\bigg(-2LG_n(x) +\frac{D^2_HG (P_nx)(QP_nDG(P_nx), QP_nDG(P_nx))}{|D_H G_n(x)|_H^2}\bigg)
\\
\\& = & \ds \frac{1}{|\varphi_n|^2_H} \bigg( \int_0^1 g''(P_nx(s))( - f_n(s) + \frac{(\varphi_n(s))^2}{|\varphi_n|^2_H})ds
+ \int_0^1  g'(P_nx(s)) (P_nx)(s)ds \bigg). 
\end{array}$$
Note that 
$$(\varphi_n(s))^2 \leq 2 \bigg(\sum_{k=1}^{n} \lambda_k a_k^2 \bigg) \bigg(  \sum_{k=1}^{n} \lambda_k ( \sin(k\pi s))^2\bigg)$$
so that 
\begin{equation}
\label{g5} \frac{(\varphi_n(s))^2}{|\varphi_n|_H^2} =
\frac{(\varphi_n(s))^2}{\sum_{k=1}^{n} \lambda_k a_k^2} \leq f_n(s).
\end{equation}
Then, 
$$\begin{array}{l}
\ds \int_0^1 g''(P_nx(s))( - f_n(s) + \frac{(\varphi_n(s))^2}{|\varphi_n|^2_H})ds
\leq \int_{\{s: g''(P_nx(s))<0\}} - g''(P_nx(s))  f_n(s)ds
\\
\\
\ds \leq \max\{ -\inf g'', 0\} \int_0^1f_n(s)ds \leq \frac{1}{6}\max\{ -\inf g'', 0\}. 
\end{array}$$
On the other hand, for each $x\in \partial \O_n$  by \eqref{g4} we have 
$$\int_0^1  g'(P_nx(s)) (P_nx)(s)ds \leq \int_0^1 ( \alpha g(P_nx(s)) + \beta)ds = \alpha r + \beta ,$$
so that 
$${\mathcal H}_n(x) \leq \frac{\pi^2(\alpha r + \beta)}{8a^2}, \quad x\in \partial \O_n,$$
and also \eqref{curvatura} is satisfied. \end{proof}

Rational functions $g(\xi) := p(\xi)/q(\xi)$, where $q$ is any positive polynomial of degree $n\in \N$, $p$ is any polynomial of degree $n+1$, and $g'(\xi)\neq 0$ for each $\xi\in \R$, satisfy the assumptions of Proposition \ref{strano}. 
 
 \appendix

 \section{The law of the Ornstein--Uhlenbeck process in $C([0,T];X)$}

We study the law of the solution $X(\cdot, x)$ to \eqref{e1.6} in $C([0,T];X)$, for a fixed $T>0$. Although the next results should be more or less known, we were not able to find any simple proper reference. 

Let $(W_k)$ be
sequence of  mutually independent real Brownian processes defined on
a filtered  probability space $(\Omega,\mathcal F, (\mathcal F_t)
_{t\ge 0},\P)$.  Define
\begin{equation}
\label{e1}
Z(t):=\sqrt Q\;W(t)=\sum_{k=1}^\infty \lambda_k^{1/2}
W_k(t)e_k. 
\end{equation}

The proof of the following lemma is standard.

\begin{Lemma}
\label{l1}
For each $t\ge 0$ the series   \eqref{e1}
converges in  $L^2(\Omega,\mathcal F,\P;X)$ and  $Z(t)$   is a $X$- valued Gaussian random variable with mean $0$ and
covariance operator $tQ$. Moreover, $\{Z(t):\; t\ge 0\}$ is a Gaussian process, that is for any $0<t_1<\cdots<t_n$, the random variable $(X(t_1),....X(t_n))$, with values in $X^n $, is Gaussian. 
\end{Lemma}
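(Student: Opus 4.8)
The plan is to handle both assertions by the same elementary device: truncate the series \eqref{e1}, observe that each truncation is a finite-dimensional Gaussian object built from the jointly Gaussian family $\{W_k(t)\}$, and then pass to the $L^2(\Omega)$ limit, using the fact that $L^2$-limits of Gaussian random variables are Gaussian.

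First I would set $Z_m(t):=\sum_{k=1}^m \lambda_k^{1/2}W_k(t)e_k$ and check the Cauchy property in $L^2(\Omega,\mathcal F,\P;X)$. Since the $W_k$ are mutually independent with $\E W_k(t)^2=t$, orthonormality of the $e_k$ gives $\E\|Z_n(t)-Z_m(t)\|^2 = t\sum_{k=m+1}^n\lambda_k$ for $n>m$, and this tends to $0$ because $\sum_k\lambda_k=\mathrm{Tr}\,Q<\infty$ ($Q$ being the covariance of a Gaussian measure on a Hilbert space, hence trace class). Completeness of $L^2(\Omega;X)$ then yields the limit $Z(t)$, and the same computation shows $\E\|Z(t)\|^2=t\,\mathrm{Tr}\,Q<\infty$.

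Next I would identify the law of $Z(t)$. For each $m$ the variable $Z_m(t)$ lives in $\mathrm{span}\{e_1,\dots,e_m\}$ with coordinates $(\lambda_1^{1/2}W_1(t),\dots,\lambda_m^{1/2}W_m(t))$ jointly Gaussian, so $Z_m(t)$ is an $X$-valued Gaussian variable with $\E\langle Z_m(t),h\rangle=0$ and $\E[\langle Z_m(t),h\rangle\langle Z_m(t),k\rangle]=t\sum_{j\le m}\lambda_j\langle e_j,h\rangle\langle e_j,k\rangle\to t\langle Qh,k\rangle$ for all $h,k\in X$. Because $Z_m(t)\to Z(t)$ in $L^2(\Omega;X)$, for every $h\in X$ the scalars $\langle Z_m(t),h\rangle$ converge in $L^2(\Omega)$, hence in distribution, to $\langle Z(t),h\rangle$; a distributional limit of centered real Gaussians whose variances converge is again centered Gaussian. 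Thus $\langle Z(t),h\rangle$ is real Gaussian for every $h$, i.e. $Z(t)$ is Gaussian, with mean $0$ and, by the variance computation above, covariance operator $tQ$. For the process statement I would fix $0<t_1<\cdots<t_\ell$ and repeat the argument in the Hilbert space $X^\ell$: the vector $(Z_m(t_1),\dots,Z_m(t_\ell))$ is a finite linear combination of the family $\{W_k(t_i):k\le m,\ 1\le i\le \ell\}$, which is jointly Gaussian since each $W_k$ is a real Gaussian process and the $W_k$ are independent, so $(Z_m(t_1),\dots,Z_m(t_\ell))$ is Gaussian in $X^\ell$; letting $m\to\infty$, each component converges in $L^2(\Omega;X)$, so the vector converges in $L^2(\Omega;X^\ell)$ and its limit $(Z(t_1),\dots,Z(t_\ell))$ is Gaussian by the same closure property.

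I expect no serious obstacle here: the only point requiring a little care is the passage to the limit, where one must record that $L^2(\Omega)$-convergence of real Gaussians with convergent means and variances forces a (possibly degenerate) Gaussian limit, and that this lifts to Hilbert-space-valued variables through the defining property ``$\langle\cdot,h\rangle$ is real Gaussian for every $h$'' (and, for the finite-dimensional distributions, through the corresponding property on $X^\ell$). Everything else is just the trace-class summability $\sum_k\lambda_k<\infty$ doing its work.
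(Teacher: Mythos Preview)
Your argument is correct and is exactly the standard one the paper alludes to: the paper does not actually write out a proof of this lemma, stating only that ``the proof of the following lemma is standard.'' Your truncation-and-$L^2$-limit approach, using $\sum_k\lambda_k=\mathrm{Tr}\,Q<\infty$ and the closure of Gaussians under $L^2$ limits, is precisely what is intended.
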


\begin{Proposition}
\label{p2}
 The process  $\{Z(t):\;t\ge 0\}$  is continuous, 
and the law of $Z(\cdot)$ in $C([0,T];X)$ is  Gaussian  
 for any $T>0$. Moreover its support is the whole $C([0,T];X)$.
\end{Proposition}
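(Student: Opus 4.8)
The plan is to reduce everything to the truncated sums $Z_n(t):=P_nZ(t)=\sum_{k=1}^n\lambda_k^{1/2}W_k(t)e_k$, which are patently continuous $X$-valued processes, and to control the tails by means of $\mathrm{Tr}\,Q=\sum_k\lambda_k<\infty$. The process $(I-P_n)Z(\cdot)$ is a continuous square-integrable martingale with values in $(I-P_n)X$, so Doob's maximal inequality yields
\[
\E\,\sup_{t\in[0,T]}\|Z(t)-Z_n(t)\|^2\le 4\,\E\,\|(I-P_n)Z(T)\|^2=4T\sum_{k>n}\lambda_k\to 0\quad(n\to\infty).
\]
Hence $Z_n\to Z$ in $L^2(\Omega;C([0,T];X))$; along a subsequence the convergence is a.s.\ uniform on $[0,T]$, so $Z$ admits a continuous modification and its law $\nu_T$ is a well-defined Radon probability measure on the Polish space $C([0,T];X)$.

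For Gaussianity, note that each $Z_n$ is the image of the $n$-fold product of one-dimensional Wiener measures on $C([0,T];\R^n)$ under the bounded linear map $(\gamma_1,\dots,\gamma_n)\mapsto\sum_{k=1}^n\lambda_k^{1/2}\gamma_ke_k$; since Wiener measure is Gaussian and bounded linear images of Gaussian measures are Gaussian (\cite{Bo}), the law of $Z_n$ is Gaussian on $C([0,T];X)$. Consequently, for every $\Lambda$ in the dual of $C([0,T];X)$ the random variable $\Lambda(Z_n)$ is a centered real Gaussian, and $\Lambda(Z_n)\to\Lambda(Z)$ in $L^2(\Omega)$; an $L^2$-limit of centered Gaussians is centered Gaussian (variances and characteristic functions converge), so $\Lambda(Z)$ is centered Gaussian. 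As $\Lambda$ is arbitrary and $\nu_T$ is Radon, $\nu_T$ is a centered Gaussian measure.

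It remains to identify the support. Since $Z(0)=0$ almost surely, $\mathrm{supp}\,\nu_T$ is contained in the closed subspace of paths vanishing at the origin, and the content of the statement is the reverse inclusion; because $\mathrm{supp}\,\nu_T$ is closed, it suffices to exhibit a dense set $D$ of such paths contained in $\mathrm{supp}\,\nu_T$. Take $D$ to consist of the finite sums $\gamma(t)=\sum_{k=1}^N h_k(t)e_k$ with each $h_k$ piecewise $C^1$ and $h_k(0)=0$: $D$ is dense since $P_N\eta\to\eta$ uniformly (the range $\eta([0,T])$ is compact) and each coordinate of $P_N\eta$ is uniformly approximable by piecewise $C^1$ functions vanishing at $0$. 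Given $\gamma\in D$ and $\eps>0$, enlarge $N$ (padding with $h_k\equiv0$) so that $\sum_{k>N}\lambda_k$ is as small as we wish; from
\[
\sup_{t\in[0,T]}\|Z(t)-\gamma(t)\|^2\le\sum_{k\le N}\sup_{t\in[0,T]}\big(\lambda_k^{1/2}W_k(t)-h_k(t)\big)^2+\sum_{k>N}\lambda_k\sup_{t\in[0,T]}W_k(t)^2
\]
and the independence of the $W_k$,
\[
\P\Big(\sup_{[0,T]}\|Z-\gamma\|^2<\eps^2\Big)\ge\prod_{k\le N}\P\Big(\sup_{[0,T]}\big|\lambda_k^{1/2}W_k-h_k\big|<\tfrac{\eps}{\sqrt{2N}}\Big)\cdot\P\Big(\sum_{k>N}\lambda_k\sup_{[0,T]}W_k^2<\tfrac{\eps^2}{2}\Big).
\]
Each factor of the finite product is strictly positive because $\lambda_k^{-1/2}h_k$ lies in the Cameron--Martin space of one-dimensional Wiener measure on $C_0([0,T])$ — it is piecewise $C^1$ and vanishes at $0$ — hence in its support; and the last factor is at least $1/2$ once $\sum_{k>N}\lambda_k$ is small enough, by $\E\sum_{k>N}\lambda_k\sup_{[0,T]}W_k^2\le 4T\sum_{k>N}\lambda_k$ and Markov's inequality. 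Thus $\gamma\in\mathrm{supp}\,\nu_T$ for all $\gamma\in D$, whence the support is exactly the space of continuous paths issued from the origin.

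Continuity and Gaussianity are essentially bookkeeping once $\sum_k\lambda_k<\infty$ is invoked, via Doob's inequality and stability of Gaussianity under $L^2$-limits. The real work — and the step I expect to be the main obstacle — is the support statement: one must produce enough explicitly reachable paths and lower-bound the corresponding small-ball probabilities, which is made tractable here by the independence of the $W_k$ together with the classical one-dimensional support theorem, so that no abstract computation of the Cameron--Martin space of $\nu_T$ is required.
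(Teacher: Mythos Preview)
Your proof is correct. The continuity argument is essentially the paper's: both show that the truncations $Z_N=P_NZ$ form a Cauchy sequence in $L^2(\Omega;C([0,T];X))$ via Doob's maximal inequality and $\mathrm{Tr}\,Q<\infty$.

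For Gaussianity you take a genuinely different and more economical route. The paper does \emph{not} reuse the truncated sums; instead it approximates $Z$ by its Bernstein polynomials
\[
\widetilde Z_n(t)=\frac{1}{T^n}\sum_{k=0}^{n}\binom{n}{k}t^k(T-t)^{n-k}Z\!\left(\tfrac{Tk}{n}\right),
\]
and then appeals to the fact that $\{Z(t)\}$ is a Gaussian \emph{process} (Lemma~\ref{l1}) to conclude that $F(\widetilde Z_n)$ is a real Gaussian for each $F$ in the dual. You bypass this entirely by observing that each $Z_N$ is the image of $N$-dimensional Wiener measure under a bounded linear map, hence already Gaussian as a $C([0,T];X)$-valued random variable, and then pass to the limit using the $L^2(\Omega;C([0,T];X))$ convergence you have just proved. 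This recycles the continuity step and avoids the Bernstein detour.

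For the support you and the paper follow the same skeleton --- split into $P_N$ and $(I-P_N)$ parts, use independence, and control the tail by Markov's inequality and $\sum_{k>N}\lambda_k\to 0$ --- but you are more careful on two points. First, you correctly observe that $Z(0)=0$ a.s.\ forces $\mathrm{supp}\,\nu_T\subset\{\eta:\eta(0)=0\}$, so the support is $C_0([0,T];X)$ rather than the whole space as the statement literally asserts; the paper's proof glosses over this (its claim ``the first factor is positive for any $n$'' fails for $\alpha$ with $\alpha(0)\neq 0$ and small $r$). Second, you make explicit the appeal to the one-dimensional Wiener support theorem for the finite product of factors, which the paper leaves as an unjustified assertion.
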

\begin{proof}  We first show that for each $T>0$ we have
\begin{equation}
\label{e12}
\E\sup_{t\in[0,T]}|Z(t)|^2<\infty
\end{equation}
which implies that $\{Z(:),\;t\ge 0\}$ has a continuous version. Set
$$
Z_N(t)=\sum_{k=1}^N\lambda_k^{1/2}W_k(t)e_k.
$$
Then by a well known martingale inequality (that is: if $R(t),\;t\in [0,T]$ is a $N$-dimensional martingale, then  $\E\sup_{t\in[0,T]}|R(t)|_{\R^N}^2\le 4\E|R(T)|^2$), we have 
$$
\E\sup_{t\in[0,T]}|Z_{N+p}(t)-Z_N(t)|^2\le 4\E|Z_{N+p}(T)-Z_N(T)|^2=4T\sum_{k=N+1}^{N+p}\lambda_k.
$$
Thus $(Z_N(\cdot))$ is convergent in $L^2([0,T];C([0,T];X))$. \bigskip

Now we show that $Z(\cdot)$ is Gaussian.  Let $F $ be an element of the dual space of $C([0,1];X)$. We have to show that $F(Z(\cdot))$ is a real Gaussian random variable. Since the $L^2(\Omega, \P)$--limit of a sequence of real Gaussian random variables  is a Gaussian random variable, it is enough to show that $F(Z_n(\cdot))$ is Gaussian,  where
 $$
 Z_n(t)=\frac{1}{T^n}\sum_{k=0}^n \binom{n}{k} t^k(T-t)^{n-k}Z\bigg(\frac{Tk}{n}\bigg) 
 $$
 are the Bernstein approximations of $Z$, that converge to $Z$ in $C([0,T];X)$. 
For each $\varphi\in C([0,T];X)$ set
 $$
 F_\varphi(x)= F(\varphi(\cdot)x), \quad x\in X.
 $$
 The functional $F_\varphi$ is linear and bounded, so that there exists  $v_\varphi\in X$ such that
 $$
 F_\varphi(x)=\langle x, v_\varphi\rangle,\quad\forall\;x\in X.
 $$
 Therefore, 
 $$
 F(Z_n(\cdot))=\sum_{k=0}^n \langle Z\bigg(\frac{Tk}{n}\bigg), v_{\varphi_k}\rangle,
 $$
 where  
 $$
 \varphi_k(t)= {n\choose k}\frac{ t^k(T-t)^{n-k}}{T^n}.
 $$
 Now the conclusion follows from the fact that  $\{Z(t):\;t\ge 0\}$ is a Gaussian process. \medskip
 
 It remains to show that  the law of $Z(\cdot)$ is full. To this aim  it is enough to show that
 $$
 \P(\|Z-\alpha\|_{\infty}\le r)>0,\quad \forall \alpha\in C([0,T];X).
 $$
For every $n\in \N$ we have  $\{ \|Z-\alpha\|_{\infty}\le r\}\subset \{ \|P_n(Z-\alpha)\|_{\infty}\le r/2\} \cap 
  \{ \|(I-P_n)(Z-\alpha)\|_{\infty}\le r/2\}$, so that 
$$
\P\left(\|Z-\alpha\|\ge r\right)\ge \P\left(\|P_n(Z-\alpha)\|\le \frac{r}2\right)\P\left(\|(1-P_n)(Z-\alpha)\|\le \frac{r}2\right)
$$
The first factor is positive for any $n$. So, we have to show that  the second one is positive for suitable $n$. 
We have
$$
\P\left(\|(1-P_n)(Z-\alpha)\|_{\infty}\le \frac{r}2\right)=1-\P\left(\|(1-P_n)(Z-\alpha)\|_{\infty}> \frac{r}2\right)
$$
where
\begin{equation}
\label{stima}
\P\left(\|(1-P_n)(Z-\alpha)\|_{\infty}>\frac{r}2\right)\le \frac{4}{r^2}\E\|(1-P_n)(Z-\alpha)\|_{\infty}^2
\end{equation}
For a.e. $\omega\in \Omega$ the function $Z(\cdot)(\omega)-\alpha $ is continuos, so that its range is compact in $X$ and therefore $\|(I-P_n)Z(t)(\omega)-\alpha\|_X $ goes to zero uniformly as $n\to \infty$. So, $\|(I-P_n)Z(\cdot)(\omega)-\alpha \|_{\infty}^2$ goes to zero a.e. in $\Omega$. Since $\|(I-P_n)(Z(\cdot)(\omega)-\alpha) \|_{\infty}^2
\leq \|Z(\cdot)(\omega) -\alpha) \|_{\infty}^2$, by dominated convergence $\lim_{n\to \infty} \E\|(1-P_n)(Z-\alpha)\|_{\infty}^2 =0$. Therefore, for $n$ large enough the right hand side of \eqref{stima} is less than $1$, and the statement follows. 
\end{proof}

For every $x\in X$ let $X(\cdot, x)$ be the unique solution   of \eqref{e1.6}.  

\begin{Proposition}
\label{App}
For each $x\in X$ and $T>0$, $X(\cdot,x)$ is a Gaussian random variable  with values in $C([0,T];X)$,  and the support of its law is the whole  $C([0,T];X)$. Moreover, the associated Cameron--Martin space is contained in $C([0,T];H)$. 
\end{Proposition}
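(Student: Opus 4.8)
The plan is to write the solution of \eqref{e1.6} through the variation of constants formula and to reduce the three assertions to Proposition \ref{p2} and to elementary properties of the solution map.

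\emph{Step 1 (representation).} The unique solution of \eqref{e1.6} is $X(t,x)=e^{-t/2}x+\int_0^t e^{-(t-s)/2}\,dZ(s)$, with $Z(\cdot)=\sqrt Q\,W(\cdot)$ as in \eqref{e1}. Integrating by parts in the stochastic integral (the factor $e^{s/2}$ has finite variation, so no It\^o correction appears) rewrites the stochastic convolution as $Z(t)-\tfrac12\int_0^te^{-(t-s)/2}Z(s)\,ds$. Hence, setting $\varphi(t):=e^{-t/2}x$ and
\begin{equation}
\label{e3}
(\Gamma\zeta)(t):=\zeta(t)-\frac12\int_0^te^{-(t-s)/2}\zeta(s)\,ds,\qquad \zeta\in C([0,T];X),\ t\in[0,T],
\end{equation}
we have $X(\cdot,x)=\varphi+\Gamma(Z(\cdot))$. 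The map $\Gamma=I-K$, with $(K\zeta)(t)=\tfrac12\int_0^te^{-(t-s)/2}\zeta(s)\,ds$, is bounded and linear on $C([0,T];X)$; since $\|K^n\|\le (T/2)^n/n!$, the Neumann series $\sum_nK^n$ converges and $\Gamma$ is a linear isomorphism of $C([0,T];X)$. As $Z(\cdot)$ is a $C([0,T];X)$-valued random variable by Proposition \ref{p2}, so is $X(\cdot,x)=\varphi+\Gamma(Z(\cdot))$.

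\emph{Step 2 (Gaussianity and full support).} Let $\nu$ denote the law of $Z(\cdot)$ in $C([0,T];X)$; by Proposition \ref{p2} it is Gaussian with support equal to the whole space. A continuous linear image of a Gaussian measure is Gaussian and a translation preserves Gaussianity, so $X(\cdot,x)=\varphi+\Gamma(Z(\cdot))$ is Gaussian. Moreover $\Gamma$ is continuous and surjective, hence $\mathrm{supp}(\Gamma_*\nu)=\overline{\Gamma(C([0,T];X))}=C([0,T];X)$, and translation by the fixed vector $\varphi$ is a homeomorphism of $C([0,T];X)$; therefore the law of $X(\cdot,x)$ has full support.

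\emph{Step 3 (Cameron--Martin space).} A translation does not change the Cameron--Martin space, so it suffices to bound that of $\Gamma_*\nu$. Since $\Gamma$ is a continuous linear injection, the transformation rule for Gaussian measures (see \cite{Bo}) identifies the Cameron--Martin space of $\Gamma_*\nu$ with $\Gamma(\mathcal H_\nu)$, $\mathcal H_\nu$ being the Cameron--Martin space of $\nu$. On one hand, $\Gamma$ maps $C([0,T];H)$ into itself: in \eqref{e3}, if $\zeta\in C([0,T];H)$ then $\int_0^te^{-(t-s)/2}\zeta(s)\,ds$ is a Bochner integral of a continuous $H$-valued function, hence lies in $H$ and depends continuously on $t$. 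On the other hand, $\mathcal H_\nu\subset C([0,T];H)$: any $h\in\mathcal H_\nu$ has the form $h=\E[\xi\,Z(\cdot)]$ with $\xi$ in the first Wiener chaos of $W$, i.e.\ $\xi=\int_0^T\langle g(s),dW(s)\rangle$ for some $g\in L^2([0,T];X)$, and computing the covariance gives $h(t)=\E[\xi\,\sqrt Q\,W(t)]=\sqrt Q\int_0^tg(s)\,ds=\int_0^t\sqrt Q\,g(s)\,ds$; since $\sqrt Q\,g(s)\in H$ with $|\sqrt Q\,g(s)|_H=\|g(s)\|$, the path $s\mapsto\sqrt Q\,g(s)$ is in $L^2([0,T];H)$, so $h$ is absolutely continuous as an $H$-valued path and thus $h\in C([0,T];H)$. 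Combining, the Cameron--Martin space of the law of $X(\cdot,x)$ is $\Gamma(\mathcal H_\nu)\subset\Gamma(C([0,T];H))\subset C([0,T];H)$.

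\emph{Main obstacle.} The only non-formal point is the inclusion $\mathcal H_\nu\subset C([0,T];H)$ of Step 3: it cannot be deduced from the sample paths of $Z$, since $Z(t)\notin H$ almost surely, and one must instead argue through the covariance of $\nu$ via the first-chaos representation. Steps 1 and 2 are routine once one has the invertibility of the Volterra perturbation $\Gamma=I-K$ and Proposition \ref{p2}.
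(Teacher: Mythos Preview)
Your proof is correct. Steps 1 and 2 follow the paper's route almost verbatim: write $X(\cdot,x)=\varphi+\Gamma(Z(\cdot))$ with the same Volterra-type operator $\Gamma$ (the paper defines $\Gamma$ implicitly as the solution map of $z(t)=-\tfrac12\int_0^t z(s)\,ds+v(t)$, which is exactly your explicit formula after integration by parts), and deduce Gaussianity and full support from Proposition~\ref{p2} and the fact that $\Gamma$ is a linear isomorphism.

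The genuine difference is in Step 3. The paper identifies the Cameron--Martin space by passing to the ambient Hilbert space $L^2(0,T;X)$ (using \cite[Lemma~3.2.2]{Bo} that the Cameron--Martin space does not depend on the ambient space), writing down the covariance operator $\mathcal Q$ of $X(\cdot,x)$ explicitly via its kernel, computing $\langle\mathcal Q^{-1}\eta,\eta\rangle$, and recognising the resulting norm as equivalent to the $W^{1,2}(0,T;H)$ norm; the inclusion $W^{1,2}(0,T;H)\subset C([0,T];H)$ then finishes. Your argument is more structural: you use the transformation rule $\mathcal H_{\Gamma_*\nu}=\Gamma(\mathcal H_\nu)$, reduce to showing $\mathcal H_\nu\subset C([0,T];H)$ for the law $\nu$ of $Z(\cdot)$, and obtain this from the first-chaos description $h(t)=\E[\xi\,Z(t)]=\sqrt Q\int_0^t g(s)\,ds$. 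This is shorter and avoids the explicit inversion of $\mathcal Q$; the paper's computation, on the other hand, yields the sharper statement that the Cameron--Martin space is (up to equivalent norms) exactly $\{\eta\in W^{1,2}(0,T;H):\eta(0)=0\}$, which your argument does not extract. One minor point worth making explicit in your write-up: the ``first Wiener chaos of $W$'' you invoke is \emph{a priori} larger than $Y^*_\nu$, the $L^2(\P)$-closure of $\{\ell(Z(\cdot)):\ell\in C([0,T];X)^*\}$; what you actually use (and what is immediate) is the inclusion $Y^*_\nu\subset\{\int_0^T\langle g,dW\rangle:g\in L^2([0,T];X)\}$, which suffices.
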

\begin{proof}
Let 
$$ \varphi_x (t)= e^{-t/2}x, \quad 0\leq t\leq T.$$
Then, $X(\cdot, x) -\varphi_x$ satisfies \eqref{e1.6} with null initial datum, so that 
$$ X(t) - \varphi_x(t) = \int_0^t e^{-(t-s)/2} \sqrt{Q}dW(s):=Y(t), \quad 0\leq t\leq T. $$
The right hand side $Y$  is a centered Gaussian random variable with values in $L^2(0,T;X)$ (e.g., \cite[Thm. 5.2]{DPZ1}).  To prove that it has values in $C([0,T];X)$, we remark that 
for a.e. $\omega\in \Omega$ we have
$$Y(t) =   - \frac12\;\int_0^tY(s)ds+\sqrt Q\;W(t), \quad 0\leq t\leq T, $$
and for such $\omega$ we have
\begin{equation}
\label{e2}
Y(t,x) =   \Gamma (\sqrt Q\;W(\cdot)),
\end{equation}
where   $\Gamma $ is the linear bounded operator in $C([0,T];X)$ that maps any   $v\in C([0,T];X)$ into the solution $z$ of the equation
\begin{equation}
\label{e3}
z(t)= -\frac12\;\int_0^tz(s)ds+v(t), \quad 0\leq t\leq T.
\end{equation}
Since $\Gamma$ is linear and bounded, then $Y(\cdot,x)$ is a Gaussian random variable  with values in $C([0,T];X)$.
Since $   \Gamma  $ is continuously invertible, and the law of $\sqrt Q\;W(\cdot)$ is full, then  the law of   $Y(\cdot,x)$   is full, and the law of $X(\cdot, x)$ is full as well. 

To determine the Cameron--Martin space, it is convenient to replace $C([0,T];X)$ by $L^2(0,T;X)$. The advantage is that $L^2(0,T;X)$ is a separable Hilbert space, and for any Gaussian measure  in a separable Hilbert space with mean $m$ and covariance ${\mathcal Q}$,  the Cameron--Martin space is precisely the range of ${\mathcal Q}^{1/2}$. On the other hand, by \cite[Lemma 3.2.2]{Bo} if a separable Banach space $\mathcal X$ is continuously embedded in another Banach space $\mathcal Y$ and $\gamma$ is a Gaussian measure on $\mathcal X$, the Cameron--Martin space is independent whether $\gamma$ is considered on $\mathcal X$ or on $\mathcal Y$. 

In our case,  the mean of $X$ is $\varphi_x$ and the covariance is the operator 
$$
\mathcal Q h(s)=\int_0^Tg(t,s)Qh(s)ds
$$
with 
$$
g(t,s)= \int_0^{\min\{t,s\}}e^{-\frac12(t-r)}e^{-\frac12(s-r)}dr= e^{-\frac12(t+s)} (e^{\min\{t,s\}}-1).
$$
See \cite[Thm. 5.2]{DPZ1}.  
An easy computation shows that the range of $\mathcal Q :L^2(0,T;X)\mapsto  L^2(0,T;X)$ consists of the functions $Qy$, with  
$y\in W^{2,2}(0,T;X)$ and $y(0)=0$, $2y'(T) + y(T) =0$. If $\eta \in $ Range $\mathcal Q$, $\eta = Qy$, then 
$$(\mathcal Q^{-1}\eta )(t) = -y''(t) -\frac{1}{4}y(t) = - Q^{-1}(\eta''(t)+ \frac{1}{4}\eta(t)), \quad 0<t<T.$$
The Cameron--Martin space is the domain of ${\mathcal Q}^{-1/2}$, which is the closure of the range of 
$\mathcal Q$ in the norm $\eta \mapsto \langle {\mathcal Q}^{-1 }\eta, \eta\rangle_{L^2(0,T;X)}$. We have
$$\langle {\mathcal Q}^{-1 }\eta, \eta\rangle_{L^2(0,T;X)} = \int_0^T \langle -Q^{-1}\eta''(t) -\frac{1}{4} Q^{-1}\eta(t), \eta(t)\rangle_X dt$$
$$= \int_0^T ( \|Q^{-1/2} \eta '(t)\|^2_X + \frac{1}{4} \|Q^{-1/2} \eta (t)\|^2_X)dt + \frac{1}{2} \|Q^{-1/2} \eta (T)\|^2_X. $$
Such norm is equivalent to the norm of $W^{1,2}(0,T;H)$. Since $W^{1,2}(0,T;H)\subset C([0,T];H)$, the last statement follows. 
\end{proof}

\end{document}